\documentclass[12pt]{amsart}

\usepackage{amsthm,amsfonts,amsmath,verbatim,mathrsfs,amssymb,verbatim,cite,fouridx,youngtab}
\usepackage[left=2.5cm, right=2.5cm, top=3.5cm, bottom=3.5cm]{geometry}
\usepackage[usenames]{color}
\usepackage{todonotes}
\usepackage{hyperref}
\usepackage{breakurl}
\usepackage{url}

\usetikzlibrary{math}

\makeatletter
\newenvironment{subtheorem}[1]{%
  \def\subtheoremcounter{#1}%
  \refstepcounter{#1}%
  \protected@edef\theparentnumber{\csname the#1\endcsname}%
  \setcounter{parentnumber}{\value{#1}}%
  \setcounter{#1}{0}%
  \expandafter\def\csname the#1\endcsname{\theparentnumber.\alph{#1}}%
  \ignorespaces
}{%
  \setcounter{\subtheoremcounter}{\value{parentnumber}}%
  \ignorespacesafterend
}
\makeatother
\newcounter{parentnumber}

\makeatletter

\makeatother
\newcounter{parentnumberr}

\newtheorem{theorem}{Theorem}[section]
\newtheorem{lemma}[theorem]{Lemma}
\newtheorem{proposition}[theorem]{Proposition}

\newtheorem{conjecture}[theorem]{Conjecture}
\newtheorem{problem}[theorem]{Problem}
\theoremstyle{definition}

\numberwithin{equation}{section}

\DeclareMathOperator{\arm}{arm}
\DeclareMathOperator{\leg}{leg}
\DeclareMathOperator{\hook}{hook}

\DeclareMathOperator{\eup}{e}
\DeclareMathOperator{\iup}{i}

\DeclareMathOperator{\BF}{BF}

\DeclareMathOperator{\BFset}{bf}

\newcommand{\blue}[1]{{\color{blue}  #1}}
\newcommand{\red}[1]{{\color{red}  #1}}
\newcommand{\magenta}[1]{{\color{magenta}  #1}}
\newcommand{\green}[1]{{\color{green}  #1}}

\renewcommand{\subseteq}{\subset}

\newcommand{\la}{\lambda}

\newcommand{\floor}[1]{\lfloor#1\rfloor}
\newcommand{\HH}{\mathscr{H}}
\newcommand{\HHbeen}{\HH^{\textup{(b)}}}
\newcommand{\HHreen}{\HH^{\textup{(r)}}}
\newcommand{\HHb}[1]{\HH_{#1}^{\textup{(b)}}}
\newcommand{\HHr}[1]{\HH_{#1}^{\textup{(r)}}}
\newcommand{\PP}{\mathscr{P}}
\newcommand{\KK}{\mathscr{K}}

\newcommand{\CC}{\mathscr{C}}

\renewcommand{\leq}{\leqslant}
\renewcommand{\geq}{\geqslant}

\newcommand{\abs}[1]{\lvert#1\rvert}

\newcommand{\core}[1]{\textup{$r$-core}(#1)}

\newcommand{\Mod}[1]{\:(\textup{mod}\:#1)}

\newcommand{\Z}{\mathbb Z}

\newcommand{\Complex}{\mathbb C}

\begin{document}

\title{Modular Nekrasov--Okounkov formulas}

\author{Adam Walsh and S. Ole Warnaar}

\address{School of Mathematics and Statistics,
The University of Melbourne, VIC 3010, Australia}
\email{adamwalsh.central@gmail.com}

\address{School of Mathematics and Physics,
The University of Queensland, Brisbane, QLD 4072,
Australia}
\email{o.warnaar@maths.uq.edu.au}
\thanks{Work supported by the Australian Research Council}

\subjclass[2010]{05A17, 05A19, 05E05, 05E10, 14C05, 33D67}

\begin{abstract}
Using Littlewood's map, which decomposes a partition into its $r$-core
and $r$-quotient, Han and Ji have shown that many well-known hook-length
formulas admit modular analogues.
In this paper we present a variant of the Han--Ji `multiplication theorem'
based on a new analogue of Littlewood's decomposition.
We discuss several applications to hook-length formulas, one of which 
leads us to conjecture a modular analogue of the $q,t$-Nekrasov--Okounkov
formula.
\end{abstract}

\dedicatory{To Christian Krattenthaler on the occasion of his 60th birthday}

\maketitle

\section{Introduction}

Hook-length formulas abound in combinatorics and representation theory.
Perhaps the most famous example is the formula for $f^{\la}$, the number
of standard Young tableaux of shape $\la$, which was discovered in 1954
by Frame, Robinson and Thrall \cite{FRT54}.  
If $\la\vdash n$ and $\HH(\la)$ denotes the multiset of hook-lengths of 
the partition $\la$ (we refer to Section~\ref{Sec_Partitions} for notation 
and definitions), then
\begin{equation}\label{Eq_FRT}
f^{\la}=\frac{n!}{\prod_{h\in\HH(\la)} h}.
\end{equation}

A much more recent identity in the spirit of \eqref{Eq_FRT} is the
Nekrasov--Okounkov formula.
It was discovered independently by Nekrasov and Okounkov \cite{NO06} in
their work on random partitions and Seiberg--Witten theory, and by
Westbury \cite{Westbury06} in his work on universal characters for
$\mathfrak{sl}_n$.
The form in which the formula is commonly stated is that of Nekrasov and
Okounkov (see \cite[Equation (6.12)]{NO06})
\begin{equation}\label{Eq_NO}
\sum_{\la\in\PP} T^{\abs{\la}} \prod_{h\in\HH(\la)}
\Big(1-\frac{z}{h^2}\Big)
=\prod_{k\geq 1}(1-T^k)^{z-1}
\end{equation}
rather than Westbury's hook-length formula for the D'Arcais polynomials 
$P_n(z)$, defined by the expansion \cite{Darcais13}
\[
\prod_{k\geq 1}\frac{1}{(1-T^k)^z}=
\sum_{n=0}^{\infty} P_n(z) T^n,
\]
and implied by Propositions 6.1 \& 6.2 of his paper \cite{Westbury06}.

The Nekrasov--Okounkov formula has attracted significant attention
in a number of different areas of mathematics and physics, including
algebraic geometry, combinatorics, number theory and string theory. 
It has seen $q$-generalisations \cite{DH11,INRS12}, $q,t$-generalisations 
\cite{AK09,AK13,CDDP15,CRV16,IKS10,HLRV11,HRV08,PS09,RW18},
an elliptic analogue \cite{LLZ06,RW18,Waelder08}, modular analogues
\cite{DH11,Han10,HJ11}, and generalisations to the affine Lie algebras 
$\mathrm{C}_n^{(1)}$ \cite{Petreolle16} and $\mathrm{D}_{n+1}^{(2)}$ 
\cite{Petreolle16b}.
It has also sparked the study of several combinatorial problems on
partitions and hook-length statistics, see e.g.,
\cite{AK18,Amdeberhan12,DHX17,FKMO08,HN18,HX18,HX16,Panova12,Stanley10},
and has given new impetus to the study of the arithmetical properties
of Euler-type products, see e.g., \cite{CKW09,CMSZ17,GLV18,HO11}.

\medskip

Let 
\[
(a_1,a_2,\dots,a_k;q_1,q_2,\dots,q_m)_{\infty}
:=\prod_{i=1}^k \prod_{j_1,\dots,j_m\geq 0}
\big(1-a_i q_1^{j_1}q_2^{j_2}\cdots q_m^{j_m}\big)
\]
be a multiple $q$-shifted factorial and, for $\la$ a partition and $r$
a positive integer, let $\HH_r(\la)$ denote the multiset of hook-lengths 
of $\la$ that are congruent to $0$ modulo $r$. 
Then one particularly interesting generalisation of the
Nekrasov--Okounkov formula is Han's modular analogue
\cite[Theorem 1.3]{Han10}
\begin{equation}\label{Eq_NOmod}
\sum_{\la\in\PP} T^{\abs{\la}}S^{\abs{\HH_r(\la)}}
\prod_{h\in\HH_r(\la)}
\Big(1-\frac{z}{h^2}\Big)
=\frac{(T^r;T^r)_{\infty}^r}
{(T;T)_{\infty}(ST^r;ST^r)_{\infty}^{r-z/r}}.
\end{equation}
He proved this identity by combining \eqref{Eq_NO} with Littlewood's
decomposition, which is a generalisation of Euclidean division to
integer partitions that has played a key role in the modular
representation theory of the symmetric group.
The question of modular analogues of hook-length formulas was further
pursued by Han in subsequent papers with Dehaye \cite{DH11} and 
Ji \cite{HJ11}.
The most general statement was formulated in this last paper.

\begin{theorem}[{`Multiplication theorem' \cite[Theorem 1.5]{HJ11}}]
\label{Thm_HanJi}
For $r$ a positive integer and $\rho$ a function on the positive integers,
let $f_r(T)$ be the formal power series defined by
\begin{equation}\label{Eq_seed}
f_r(T):=\sum_{\la\in\PP}T^{\abs{\la}}\prod_{h\in \HH(\la)}\rho(rh).
\end{equation}
Then
\begin{equation}\label{Eq_trafo}
\sum_{\la\in\PP}T^{\abs{\la}}S^{\abs{\HH_r(\la)}}
\prod_{h\in \HH_r(\la)} \rho(h)
=\frac{(T^r;T^r)_{\infty}^r}{(T;T)_{\infty}}\,\big(f_r(ST^r)\big)^r.
\end{equation}
\end{theorem}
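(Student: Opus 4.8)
The plan is to apply Littlewood's decomposition directly to the left-hand side of \eqref{Eq_trafo} and to exploit its compatibility with the size statistic and with those hook-lengths that are divisible by $r$. First I would recall that Littlewood's map furnishes a bijection
\[
\PP \longrightarrow \CC_r \times \PP^r, \qquad
\la \longmapsto \big(\core{\la},\,\quot{\la}\big),
\]
where $\CC_r$ denotes the set of $r$-cores and $\quot{\la}=(\la^{(0)},\dots,\la^{(r-1)})$ is the $r$-quotient. The two structural properties I need are the size relation
\[
\abs{\la}=\Abs{\core{\la}}+r\sum_{i=0}^{r-1}\abs{\la^{(i)}}
\]
and the hook-length relation: the multiset $\HH_r(\la)$ of hook-lengths of $\la$ divisible by $r$ is precisely $r$ times the union of the \emph{full} hook-length multisets of the quotient parts, so that $\HH_r(\la)=\{rh:h\in\HH(\la^{(i)}),\ 0\le i\le r-1\}$ as multisets. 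In particular $\abs{\HH_r(\la)}=\sum_{i}\abs{\la^{(i)}}$, since any partition has exactly as many hooks as cells.

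Substituting these into the summand, the exponent of $T$ splits as $\abs{\core{\la}}+r\sum_i\abs{\la^{(i)}}$, the exponent of $S$ becomes $\sum_i\abs{\la^{(i)}}$, and the product over $\HH_r(\la)$ factors as $\prod_{i=0}^{r-1}\prod_{h\in\HH(\la^{(i)})}\rho(rh)$. Because the bijection lets me sum over the core and over the $r$ quotient parts independently, the left-hand side factors as
\[
\Big(\sum_{\mu\in\CC_r}T^{\abs{\mu}}\Big)
\prod_{i=0}^{r-1}\Big(\sum_{\nu\in\PP}(ST^r)^{\abs{\nu}}
\prod_{h\in\HH(\nu)}\rho(rh)\Big).
\]
The $r$ inner factors are identical, and each equals $f_r(ST^r)$ by the definition \eqref{Eq_seed}, giving the factor $\big(f_r(ST^r)\big)^r$.

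It then remains to identify the $r$-core generating function $\sum_{\mu\in\CC_r}T^{\abs{\mu}}$ with $(T^r;T^r)_{\infty}^r/(T;T)_{\infty}$. The cleanest route is to specialise the factorisation just obtained to the trivial weight $\rho\equiv 1$ and $S=1$: the left-hand side collapses to $\sum_{\la\in\PP}T^{\abs{\la}}=1/(T;T)_{\infty}$, while each inner factor becomes $\sum_{\nu\in\PP}(T^r)^{\abs{\nu}}=1/(T^r;T^r)_{\infty}$, and solving the resulting identity for the core sum produces exactly the claimed product. Combining the two computations yields \eqref{Eq_trafo}.

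I expect the only genuine content to lie in the hook-length relation $\HH_r(\la)=\{rh:h\in\HH(\la^{(i)})\}$; everything else is bookkeeping that follows mechanically from the size relation and the bijectivity of Littlewood's map. The main obstacle is therefore not the algebraic manipulation but verifying this multiset identity \emph{with correct multiplicities}, which is precisely the point at which the abacus/bead description of the $r$-quotient must be invoked. Once that identity is in hand, the factorisation of the generating function is automatic.
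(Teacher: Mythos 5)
Your proposal is correct and matches the paper's own argument in all essentials: the paper likewise applies Littlewood's decomposition with the size relation \eqref{Eq_Lit2} and the multiset identity $\HH_r(\la)=r\HH(\boldsymbol{\nu})$ \eqref{Eq_Lit3} to factor the left-hand side into the $r$-core generating function times $\big(f_r(ST^r)\big)^r$, and it obtains $\sum_{\omega\in\CC_r}T^{\abs{\omega}}=(T^r;T^r)_{\infty}^r/(T;T)_{\infty}$ by exactly your specialisation trick (summing the fixed-core identity over all cores and solving). Your closing remark is also on target: the only nontrivial input is the multiset identity with multiplicities, which the paper, like you, takes from the standard abacus/$0/1$-sequence description rather than reproving.
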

Whenever $\rho$ is chosen such that $f_r(T)$ admits a closed-form
expression, the above theorem immediately implies a modular analogue of
the hook-length formula
\[
\sum_{\la\in\PP}T^{\abs{\la}}\prod_{h\in \HH(\la)}\rho(h)=f_1(T).
\]

In this paper we describe a variant of Littlewood's decomposition which
implies an analogue of the Han--Ji multiplication theorem for
hook-length formulas involving the hook-lengths of squares
of partitions that have trivial leg-length. 
Combining this with the Han--Ji multiplication theorem,
suggests a modular analogue of the $q,t$-analogue
of \eqref{Eq_NO}, see e.g., \cite[Theorem 1.0.2]{CRV16} or
\cite[Theorem 1.3]{RW18}
\begin{equation}\label{Eq_qtNO}
\sum_{\la\in\PP} T^{\abs{\la}} \prod_{s\in\la}
\frac{(1-uq^{a(s)+1}t^{l(s)})(1-u^{-1}q^{a(s)}t^{l(s)+1})}
{(1-q^{a(s)+1}t^{l(s)})(1-q^{a(s)}t^{l(s)+1})}
=\frac{(uqT,u^{-1}tT;q,t,T)_{\infty}}
{(T,tT;q,t,T)_{\infty}},
\end{equation}
where $a(s)$ and $l(s)$ are the arm-length and leg-length of the square
$s\in\la$.
In its simplest form, this conjecture can be stated as follows.
\begin{conjecture}\label{Conj_modqtNO}
For $r$ a positive integer
\begin{align*}
&\sum_{\la\in\PP} T^{\abs{\la}} S^{\abs{\HH_r(\la)}}
\prod_{\substack{s\in\la \\[1pt] h(s)\equiv 0 \Mod{r}}}
\frac{(1-uq^{a(s)+1}t^{l(s)})(1-u^{-1}q^{a(s)}t^{l(s)+1})}
{(1-q^{a(s)+1}t^{l(s)})(1-q^{a(s)}t^{l(s)+1})} \\[1mm]
&\quad\qquad=\frac{(T^r;T^r)_{\infty}^r}
{(T;T)_{\infty}(ST^r;ST^r)_{\infty}^r} 
\prod_{\substack{i,j\geq 1 \\ i+j\equiv 1 \Mod{r}}}
\frac{(uq^it^{j-1}ST^r,u^{-1}q^{i-1}t^jST^r;ST^r)_{\infty}}
{(q^it^{j-1}ST^r,q^{i-1}t^jST^r;ST^r)_{\infty}} \notag \\[1mm]
&\quad\qquad=\frac{(T^r;T^r)_{\infty}^r}
{(T;T)_{\infty}(ST^r;ST^r)_{\infty}^r} 
\prod_{i=1}^r 
\frac{(uq^it^{r-i}ST^r,u^{-1}q^{r-i}t^iST^r;q^r,t^r,ST^r)_{\infty}}
{(q^it^{r-i}ST^r,q^{r-i}t^iST^r;q^r,t^r,ST^r)_{\infty}}.
\end{align*}
\end{conjecture}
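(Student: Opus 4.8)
The plan is first to reduce the three right-hand sides to one. The passage from the second to the third line is a pure reindexing of the multiple $q$-shifted factorial: writing, within each residue class modulo $r$, the positive $q$-exponent as $i+rm$ and the $t$-exponent as $(r-i)+rn$ with $i\in\{1,\dots,r\}$ and $m,n\geq0$ (and dually for the $u^{-1}$ factors) turns the single base $ST^r$ into the triple bases $q^r,t^r,ST^r$. It therefore suffices to match the combinatorial sum (first line) with the second expression. I would record at the outset the dictionary $i=a(s)+1$, $j=l(s)+1$: the constraint $h(s)=a(s)+l(s)+1\equiv0\Mod r$ becomes $i+j\equiv1\Mod r$, and the cell weight becomes $\frac{(1-uq^it^{j-1})(1-u^{-1}q^{i-1}t^j)}{(1-q^it^{j-1})(1-q^{i-1}t^j)}$, so that each admissible cell of $\la$ should account for exactly one factor of the product on the right.

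The natural route is then to seek a multiplication theorem for the full $q,t$-weight by applying Littlewood's $r$-core/$r$-quotient bijection to the sum on the left, exactly as in the proof of Theorem~\ref{Thm_HanJi} and of Han's \eqref{Eq_NOmod}. Writing the image of $\la$ as its $r$-core $\mu$ together with its $r$-quotient $(\la^{(0)},\dots,\la^{(r-1)})$, one has $\abs{\la}=\abs{\mu}+r\sum_c\abs{\la^{(c)}}$ and $\abs{\HH_r(\la)}=\sum_c\abs{\la^{(c)}}$, which already produces the prefactor $(T^r;T^r)_\infty^r/\big((T;T)_\infty(ST^r;ST^r)_\infty^r\big)$ and the base $ST^r$: the factor $(T^r;T^r)_\infty^r/(T;T)_\infty$ is the generating function of $r$-cores, and the $r$-fold power reflects the $r$ quotient components. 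Moreover the admissible cells $s\in\la$ (those with $r\mid h(s)$) are in bijection with the cells $s'$ of the quotient components, with $h(s)=r\,h(s')$. One would like this bijection to refine to $a(s)=r\,a(s')+(i-1)$ and $l(s)=r\,l(s')+(r-i)$ on the component labelled $i$, for then $q^{a(s)+1}t^{l(s)}=q^it^{r-i}(q^r)^{a(s')}(t^r)^{l(s')}$, the weight of $\la$ would factor over the $r$ components into shifted base-$(q^r,t^r)$ weights, and an appeal to \eqref{Eq_qtNO} in the variable $ST^r$ for each component would finish the proof, the $u\leftrightarrow u^{-1}$ parts matching after the relabelling $i\mapsto r+1-i$.

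The hard part --- and the reason the statement is a conjecture rather than a theorem --- is that this cell-wise refinement is \emph{false}. In the abacus model an admissible cell corresponds to a bead at some position $\beta$ and an empty position $\gamma=\beta-h(s)$ on the \emph{same} runner $c=\beta\bmod r$, and its leg length equals the number of beads strictly between $\gamma$ and $\beta$. The beads on runner $c$ contribute the quotient leg $l(s')$, but the beads on the other $r-1$ runners inside the window $(\gamma,\beta)$ contribute an amount depending on the \emph{entire} bead configuration and not on $c$ alone; the arm behaves dually. Already for $r=2$ and $\la=(2,1,1)$ the two admissible cells (hook-lengths $2$ and $4$) lie in the same quotient component $\la^{(0)}=(1,1)$ yet carry the different offsets $(0,1)$ and $(1,0)$. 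Thus, unlike the hook-length weights of Theorem~\ref{Thm_HanJi} and the trivial-leg-length weights treated by the new variant of Littlewood's decomposition introduced in this paper --- where the weight is insensitive to this cross-runner mixing --- the full $q,t$-weight couples the runners, and the desired factorisation can hold only \emph{after} summation over $\la$, not cell by cell. A proof must therefore control the joint distribution of arm and leg lengths of the admissible cells over the fibre of the quotient map and show that the cross-runner fluctuations telescope to the clean product; this bigraded strengthening of Littlewood's decomposition is the central obstacle.

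In parallel I would try to bypass the combinatorics and lift the analytic derivation of \eqref{Eq_qtNO} (for instance the Macdonald-function/vertex-operator proof of \cite{RW18}): there the left side is a specialisation of a Cauchy-type kernel, and since the $r$-quotient is the shadow of an $r$-fold (level-$r$) tensor structure, a principal specialisation adapted to that structure should generate the bases $q^r,t^r$ and the $r$-fold product intrinsically. Whichever route one takes, I would first pin the conjecture down by checking the specialisations it must satisfy: $r=1$ collapses the product to \eqref{Eq_qtNO}; $q=t$ reduces the cell weight to the hook-length weight $\frac{(1-uq^{h})(1-u^{-1}q^{h})}{(1-q^{h})^2}$ and, via Theorem~\ref{Thm_HanJi}, to Han's formula \eqref{Eq_NOmod} in the appropriate limit; and matching the coefficients of the first several powers of $T$ --- a finite computation over the partitions of bounded size --- against the right-hand product would give strong evidence before the bigraded decomposition is attempted.
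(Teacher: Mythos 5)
The statement you were asked to prove is Conjecture~\ref{Conj_modqtNO}; the paper offers no proof of it, so there is no argument of the authors to measure yours against, and your proposal --- which explicitly stops short of claiming a proof --- is an accurate reconstruction of the actual state of affairs. Your preliminary reductions are correct: the passage between the two product forms is exactly the reindexing $i\mapsto i+rm$, $j-1\mapsto(r-i)+rn$ within each residue class, and the dictionary $i=a(s)+1$, $j=l(s)+1$ turning $h(s)\equiv 0\Mod{r}$ into $i+j\equiv 1\Mod{r}$ is the right normalisation. Your diagnosis of the central obstruction is also the one the authors themselves record: Littlewood's $\phi_r$ preserves only the hook multiset, via $\HH_r(\la)=r\HH(\boldsymbol{\nu})$ in \eqref{Eq_Lit3}, and not the bigraded arm--leg data, so the cell weight does not factor over the quotient components; your counterexample $r=2$, $\la=(2,1,1)$, whose two admissible cells (hooks $2$ and $4$) carry offsets $(0,1)$ and $(1,0)$ while mapping into a single quotient component, checks out. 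Compare the authors' remark in Section~\ref{Sec_qt-modular} that ``neither conjecture is tractable by a multiplication-type theorem'' and that ``non-trivial rational function identities are behind the $\omega$-independence''. Two points would sharpen the alignment with the paper. First, the paper's primary formulation is the stronger fixed-core identity (the first conjecture of Section~\ref{Sec_qt-modular}, elliptically Conjecture~\ref{Conj_elliptic}), from which Conjecture~\ref{Conj_modqtNO} follows by $T\mapsto ST^r$ and summation over $\CC_r$ using \eqref{Eq_GF-rcores}; that fixed-core form is the natural home for your fibre-wise analysis of the quotient map, and the ``telescoping of cross-runner fluctuations'' you anticipate is exhibited there concretely for $n=2$ via the theta addition formula \eqref{Eq_addition}, proving $f_{0;r,2}(u;q,t;p)=f_{(1);r,2}(u;q,t;p)$. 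Second, your proposed consistency checks coincide with the paper's actual evidence: the $t=0$ boundary case is the $z=1$ specialisation of \eqref{Eq_unify}, proved via the new multiplication theorem of Section~\ref{Sec_New-multiplication-thm}, and the $q=t$ case reduces via Theorem~\ref{Thm_HanJi} to the Dehaye--Han modular $q$-Nekrasov--Okounkov formula. One caution if you pursue the component-wise appeal to \eqref{Eq_qtNO}: as printed, its right-hand side fails the $u=1$ check (the left side then gives $1/(T;T)_{\infty}$, the right side $1/(T;t,T)_{\infty}$); the denominator should read $(T,qtT;q,t,T)_{\infty}$, as it does in the $p=1$ case of \eqref{Eq_eNO}, and your reductions are consistent with that corrected form.
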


\medskip

The remainder of this paper is organised as follows.
After some preliminary discussions on formal power series and 
integer partitions in the next two sections, Section~\ref{Sec_Littlewood}
reviews Littlewood's classical decomposition of a partition into its
$r$-core and $r$-quotient. 
This is used in Section~\ref{Sec_Multiplication-thm} in our discussion
of the Han--Ji multiplication theorem.
Then, in Section~\ref{Sec_New-multiplication-thm}, we propose an analogue
of Littlewood's decomposition. 
This new decomposition is applied to prove an analogue of 
Theorem~\ref{Thm_HanJi} by Han and Ji.
Section~\ref{Sec_Applications} contains a number of applications
of this new multiplication theorem to hook-length formulas.
Finally, in Section~\ref{Sec_Conjectures} we present a number of 
conjectures and open problems.
This includes a refinement of Conjecture~\ref{Conj_modqtNO}, 
some problems pertaining to elliptic $q,t$-Nekrasov--Okounkov
formulas (Proposition~\ref{Prop_eqNO} of that section does 
prove an elliptic $q$-analogue of \eqref{Eq_NOmod})
and a discussion of a possible extension of our new multiplication
theorem motivated by a combinatorial identity of Buryak, Feigin and
Nakajima which arose in their work on quasihomogeneous Hilbert schemes.

\section{Formal power series}

All series and series identities considered in this paper are viewed from the
point of view of formal power series, typically in the formal variable $T$.
For example, we regard the Nekrasov--Okounkov formula \eqref{Eq_NO} as
an identity in $\mathbb{Q}[z][[T]]$, where 
\[
(1-T)^z:=\sum_{n\geq 0} \binom{z}{n} T^n
\]
and
\[
\binom{z}{n}:=(-1)^n \frac{z(z+1)\cdots(z+n-1)}{n!}.
\]
Those preferring an analytic point of view should have little trouble
adding the required convergence conditions. 
In the case of \eqref{Eq_NO}, for example, it suffices to take 
$T\in\mathbb{C}$, $z\in\mathbb{R}$ such that $\abs{T}<1$,
or $-1<T<1$ and $z\in\mathbb{C}$.

\section{Partitions}\label{Sec_Partitions}

A partition $\la=(\la_1,\la_2,\dots)$ is a weakly decreasing
sequence of nonnegative integers such that only finitely many
$\la_i$ are strictly positive.
The positive $\la_i$ are called the parts of $\la$, and the 
length of $\la$, denoted $\ell(\la)$, counts the number
of parts. 
If $\abs{\la}:=\sum_{i\geq 1}\la_i=n$ we say that $\la$ is a partition
of $n$, denoted as $\la\vdash n$.
We typically suppress the infinite tail of zeros of a partition,
so that, for example, the partition $(6,5,5,3,1,1,0,\dots)$ of
$21$ is denoted by $(6,5,5,3,1,1)$.
The set of all partitions, including the
unique partition of $0$ (also written as $0$), is denoted by $\PP$.
This set has the well-known generating function
\begin{equation}\label{Eq_GFpart}
\sum_{\la\in\PP} z^{\ell(\la)} T^{\abs{\la}}=\frac{1}{(zT;T)_{\infty}}.
\end{equation}

We identify a partition $\la$ with its Young diagram, consisting of
$\ell(\la)$ left-aligned rows of squares such that the $i$th row 
contains $\la_i$ squares.
For example, the partition $(6,5,5,3,1,1)$ corresponds to
the diagram
\medskip

\begin{center}
\begin{tikzpicture}[scale=0.3,line width=0.3pt]
\draw (0,0)--(6,0);
\draw (0,-1)--(6,-1);
\draw (0,-2)--(5,-2);
\draw (0,-3)--(5,-3);
\draw (0,-4)--(3,-4);
\draw (0,-5)--(1,-5);
\draw (0,-6)--(1,-6);
\draw (0,0)--(0,-6);
\draw (1,0)--(1,-6);
\draw (2,0)--(2,-4);
\draw (3,0)--(3,-4);
\draw (4,0)--(4,-3);
\draw (5,0)--(5,-3);
\draw (6,0)--(6,-1);
\end{tikzpicture}
\end{center}
The squares of $\la$ are indexed by coordinates $(i,j)\in\mathbb{N}^2$,
with $i$ the row and $j$ the column coordinate, such that the 
top-left square corresponds to $(1,1)$.

The conjugate $\la'$ of the partition $\la$ is obtained by reflecting
$\la$ in the diagonal $i=j$, so that rows become columns and vice versa.
The conjugate of the partition in our running example is $(6,4,4,3,3,1)$.
Given a partition $\la$, the multiplicity of parts of size $i$,
denoted $m_i(\la)$, can be expressed in terms of $\la'$ as 
$m_i(\la)=\la'_i-\la'_{i+1}$.
We alternatively write partitions using the multiplicities, so that
$(6,4,4,3,3,1)=(6,4^2,3^2,1)$.

To each square $s=(i,j)\in\la$ we associate an arm, leg and hook,
defined as the sets of squares 
\begin{align*}
\arm(s)&:=\{(i,k): j<k\leq\la_i\},  \\[1mm] 
\leg(s)&:=\{(k,j): i<k\leq\la'_j\},  \\[1mm]
\hook(s)&:=\arm(s)\cup\leg(s)\cup\{s\}.
\end{align*}
Below, the arm and leg of the square $s=(2,2)$ of $(6,5,5,3,1,1)$ are 
marked in dark and light blue respectively in the diagram on the left. 
Similarly, the hook of $(2,2)$ is marked in the diagram on the right:
\smallskip

\begin{center}
\begin{tikzpicture}[scale=0.3,baseline=0cm]
\tikzmath{\x=13;}

\draw (1.5,-1.5) node {$s$};
\begin{scope}[color=blue!70]
\fill(2,-2) rectangle (5,-1);
\end{scope}
\begin{scope}[color=blue!20]
\fill(1,-2) rectangle (2,-4);
\end{scope}
\draw (0,0)--(6,0);
\draw (0,-1)--(6,-1);
\draw (0,-2)--(5,-2);
\draw (0,-3)--(5,-3);
\draw (0,-4)--(3,-4);
\draw (0,-5)--(1,-5);
\draw (0,-6)--(1,-6);
\draw (0,0)--(0,-6);
\draw (1,0)--(1,-6);
\draw (2,0)--(2,-4);
\draw (3,0)--(3,-4);
\draw (4,0)--(4,-3);
\draw (5,0)--(5,-3);
\draw (6,0)--(6,-1);
\begin{scope}[color=blue!20]
\fill(\x+1,-2) rectangle (\x+5,-1);
\fill(\x+1,-2) rectangle (\x+2,-4);
\end{scope}
\draw (\x,0)--(\x+6,0);
\draw (\x,-1)--(\x+6,-1);
\draw (\x,-2)--(\x+5,-2);
\draw (\x,-3)--(\x+5,-3);
\draw (\x,-4)--(\x+3,-4);
\draw (\x,-5)--(\x+1,-5);
\draw (\x,-6)--(\x+1,-6);
\draw (\x,0)--(\x,-6);
\draw (\x+1,0)--(\x+1,-6);
\draw (\x+2,0)--(\x+2,-4);
\draw (\x+3,0)--(\x+3,-4);
\draw (\x+4,0)--(\x+4,-3);
\draw (\x+5,0)--(\x+5,-3);
\draw (\x+6,0)--(\x+6,-1);
\draw (\x+1.5,-1.5) node {$s$};
\end{tikzpicture}
\end{center}

\medskip

\noindent
Correspondingly, we have the three statistics $a(s)$, $l(s)$ and $h(s)$,
known as arm-length, leg-length and hook-length, given by
\begin{align*}
a(s)&:=\abs{\arm(s)}=\la_i-j,\\[1mm]
l(s)&:=\abs{\leg(s)}=\la'_j-i, \\[1mm]
h(s)&:=\abs{\hook(s)}=\la_i+\la'_j-i-j+1.
\end{align*}
For $r$ a positive integer, the multiset
of hook-lengths of $\la$ congruent to $0$ modulo $r$
is denoted by $\HH_r(\la)$.
When $r=1$ we more simply write $\HH(\la)$ for the
multiset of all hook-lengths, omitting the subscript $1$.
It is often convenient to record $\HH_r(\la)$ or $\HH(\la)$
by writing the hook-lengths in the diagram of $\la$. 
For the partition in our example this gives

\begin{center}
\begin{tikzpicture}[scale=0.3,baseline=0cm]
\tikzmath{\x=22.5;}
\draw (0,0)--(6,0);
\draw (0,-1)--(6,-1);
\draw (0,-2)--(5,-2);
\draw (0,-3)--(5,-3);
\draw (0,-4)--(3,-4);
\draw (0,-5)--(1,-5);
\draw (0,-6)--(1,-6);
\draw (0,0)--(0,-6);
\draw (1,0)--(1,-6);
\draw (2,0)--(2,-4);
\draw (3,0)--(3,-4);
\draw (4,0)--(4,-3);
\draw (5,0)--(5,-3);
\draw (6,0)--(6,-1);
\draw (-6.25,-2.5) node {$\HH(6,5,5,3,1,1) = $};
\draw (0.5,-0.5) node {$\scriptstyle 11$};
\draw (1.5,-0.5) node {$\scriptstyle 8$};
\draw (2.5,-0.5) node {$\scriptstyle 7$};
\draw (3.5,-0.5) node {$\scriptstyle 5$};
\draw (4.5,-0.5) node {$\scriptstyle 4$};
\draw[red] (5.5,-0.5) node {$\scriptstyle 1$};
\draw (0.5,-1.5) node {$\scriptstyle 9$};
\draw (1.5,-1.5) node {$\scriptstyle 6$};
\draw (2.5,-1.5) node {$\scriptstyle 5$};
\draw (3.5,-1.5) node {$\scriptstyle 3$};
\draw (4.5,-1.5) node {$\scriptstyle 2$};
\draw (0.5,-2.5) node {$\scriptstyle 8$};
\draw (1.5,-2.5) node {$\scriptstyle 5$};
\draw (2.5,-2.5) node {$\scriptstyle 4$};
\draw[red] (3.5,-2.5) node {$\scriptstyle 2$};
\draw[red] (4.5,-2.5) node {$\scriptstyle 1$};
\draw (0.5,-3.5) node {$\scriptstyle 5$};
\draw[red] (1.5,-3.5) node {$\scriptstyle 2$};
\draw[red] (2.5,-3.5) node {$\scriptstyle 1$};
\draw (0.5,-4.5) node {$\scriptstyle 2$};
\draw[red] (0.5,-5.5) node {$\scriptstyle 1$};
\draw (\x,0)--(\x+6,0);
\draw (\x,-1)--(\x+6,-1);
\draw (\x,-2)--(\x+5,-2);
\draw (\x,-3)--(\x+5,-3);
\draw (\x,-4)--(\x+3,-4);
\draw (\x,-5)--(\x+1,-5);
\draw (\x,-6)--(\x+1,-6);
\draw (\x,0)--(\x,-6);
\draw (\x+1,0)--(\x+1,-6);
\draw (\x+2,0)--(\x+2,-4);
\draw (\x+3,0)--(\x+3,-4);
\draw (\x+4,0)--(\x+4,-3);
\draw (\x+5,0)--(\x+5,-3);
\draw (\x+6,0)--(\x+6,-1);
\draw (\x-6.35,-2.5) node {$\HH_2(6,5,5,3,1,1) = $};
\draw (\x+1.5,-0.5) node {$\scriptstyle 8$};
\draw (\x+4.5,-0.5) node {$\scriptstyle 4$};
\draw (\x+1.5,-1.5) node {$\scriptstyle 6$};
\draw (\x+4.5,-1.5) node {$\scriptstyle 2$};
\draw (\x+0.5,-2.5) node {$\scriptstyle 8$};
\draw (\x+2.5,-2.5) node {$\scriptstyle 4$};
\draw[red] (\x+3.5,-2.5) node {$\scriptstyle 2$};
\draw[red] (\x+1.5,-3.5) node {$\scriptstyle 2$};
\draw (\x+0.5,-4.5) node {$\scriptstyle 2$};
\end{tikzpicture}
\end{center}
\smallskip
\noindent where the colouring of some of the hook-lengths 
is to be ignored for now.

We refer to a square $s\in\la$ as a `bottom square' if it has
coordinates $(\la'_j,j)$ for some $1\leq j\leq\la_1$.
In other words, a square $s\in\la$ is a bottom square if it has 
leg-length $l(s)$ equal to zero. (In \cite{HN18} such squares are
referred to as having `trivial legs'.)
In analogy with $\HH_r(\la)$, we write $\HHb{r}(\la)$ for the 
multiset of hook-lengths of bottom squares that are congruent to 
$0$ modulo $r$, and set
$\HHbeen(\la):=\HHb{1}(\la)$.
The hook-lengths coloured red in the above two diagrams correspond to
those bottoms squares that contribute to $\HHbeen(6,5,5,3,1,1)$ and
$\HHb{2}(6,5,5,3,1,1)$ respectively.
Thus
\[
\HHbeen(6,5,5,3,1,1)=\{1^4,2^2\}\quad\text{and}\quad
\HHb{2}(6,5,5,3,1,1)=\{2^2\}.
\]
We also use the notation $\HHr{r}(\la):=\HHb{r}(\la')$ 
(r for right), so that $\HHreen(\la)$ is the multiset of hook lengths
of squares $s\in\la$ which have trivial arm.
We may think of
\begin{equation}\label{Eq_modular-length}
\ell_r(\la):=\abs{\HHr{r}(\la)}=\sum_{i\geq 1} 
\Big\lfloor\frac{m_i(\la)}{r}\Big\rfloor
\end{equation}
as a modular generalisation of the ordinary length statistic on partitions,
counting the number of squares $s=(i,\la_i)$ of $\la$ such that
$h(s)\equiv 0 \pmod{r}$.
For example, for the partition $\la=(3,2,2,1,1,1,1)$ we have 
$\ell(\la)=\ell_1(\la)=7$, $\ell_2(\la)=3$, 
$\ell_3(\la)=\ell_4(\la)=1$ and $\ell_r(\la)=0$ for $r\geq 5$:

\begin{center}
\begin{tikzpicture}[scale=0.3,line width=0.3pt]
\draw (0,0)--(3,0);
\draw (0,-1)--(3,-1);
\draw (0,-2)--(2,-2);
\draw (0,-3)--(2,-3);
\draw (0,-4)--(1,-4);
\draw (0,-5)--(1,-5);
\draw (0,-6)--(1,-6);
\draw (0,-7)--(1,-7);
\draw (0,0)--(0,-7);
\draw (1,0)--(1,-7);
\draw (2,0)--(2,-3);
\draw (3,0)--(3,-1);
\draw (2.5,-0.5) node {\red{$\scriptstyle 1$}};
\draw (1.5,-1.5) node {\red{$\scriptstyle 2$}};
\draw (1.5,-2.5) node {\red{$\scriptstyle 1$}};
\draw (0.5,-3.5) node {\red{$\scriptstyle 4$}};
\draw (0.5,-4.5) node {\red{$\scriptstyle 3$}};
\draw (0.5,-5.5) node {\red{$\scriptstyle 2$}};
\draw (0.5,-6.5) node {\red{$\scriptstyle 1$}};
\end{tikzpicture}
\end{center}

A partition $\la$ is called an $r$-core if $\HH_r(\la)=\emptyset$,
i.e., if none of its hook-lengths is a multiple of $r$. 
We use $\CC_r$ to denote the set of $r$-cores.
Note that $\CC_1=\{0\}$ and $\CC_2$ is the set of
staircase partitions: 
\[
\CC_2=\{\delta_n: n\geq 1\},
\]
where $\delta_n:=(n-1,\dots,2,1,0)$.
In much the same way, we say that $\la$ is an $r$-kernel if
$\HHb{r}(\la)=\emptyset$, and denote the set of $r$-kernels
by $\KK_r$. Clearly, $\CC_r\subseteq\KK_r$,
where the inclusion is strict unless $r=1$.
$\KK_r$ is given by the set of partitions $\la$ such that the
differences between consecutive $\la_i$ are at most $r-1$:
\begin{equation}\label{Eq_rkernels}
\KK_r=\big\{\la\in\mathscr{P}: 
\la_i-\la_{i+1}<r \text{ for all } i\geq 1\big\}.
\end{equation}
It is an elementary fact, see e.g., \cite{Andrews76}, that
such partitions have generating function
\begin{equation}\label{Eq_GF-rkernels}
\sum_{\la\in\KK_r} z^{\la_1} T^{\abs{\la}}=
\frac{(z^rT^r;T^r)_{\infty}}{(zT;T)_{\infty}}.
\end{equation}

\section{Littlewood's decomposition}\label{Sec_Littlewood}

Littlewood's decomposition 
\begin{align*}
\phi_r:\PP &\longrightarrow \CC_r\times \PP^r \\
\la &\longmapsto (\mu,\boldsymbol{\nu})=
\big(\mu,(\nu^{(0)},\nu^{(1)},\dots,\nu^{(r-1)})\big)
\end{align*}
is a generalisation of Euclidean division to integer partitions, 
and first arose in the modular representation theory of the symmetric 
group \cite{Littlewood51,Nakayama41}.
Given a positive integer $r$, it decomposes a partition
$\la$ into an $r$-core, $\mu$, and a sequence 
$\boldsymbol{\nu}=(\nu^{(0)},\nu^{(1)},\dots,\nu^{(r-1)})$ of $r$ 
partitions, known as the $r$-quotient of $\la$.
Instead of $\mu$ we will sometimes write $\core{\la}$ for the 
$r$-core of $\la$.
We also use the shorthand notation
\[
\abs{\boldsymbol{\nu}}:=\sum_{i=0}^{r-1} \abs{\nu^{(i)}} \quad
\text{and}\quad
\HH(\boldsymbol{\nu}):=\bigcup_{i=0}^{r-1} \HH(\nu^{(i)}),
\]
where the union is that of multisets.

There are numerous equivalent descriptions of $\phi_r$, see
e.g., \cite{AF04,GKS90,HJ11,JK81,vanLeeuwen99,Macdonald95,Tingley08}.
Given a partition $\la$ we form its bi-infinite edge or $0/1$-sequence 
$s=s(\la)$ (also known as the code of $\la$) by tracing the extended 
boundary of $\la$, encoding an up step by a $0$ and a right step by a $1$.
(When $0$s are replaced by black beads and $1$s by white beads, such a
sequence is also known as a Maya diagram \cite[\S 4.1]{DJKMO89}.)
For example, the $0/1$-sequence of the partition $(5,4,4,1)$ is determined
as
\medskip
\begin{center}
\begin{tikzpicture}[scale=0.4,baseline=0cm,line width=1pt]
\draw[thin] (0,0)--(0,-2);
\draw[thin] (0,-1)--(0.1,-1);
\draw[thin] (0,-2)--(0.1,-2);
\draw[thin,dotted] (0,-2)--(0,-3);
\draw[thin] (5,4)--(7,4);
\draw[thin] (6,4)--(6,3.9);
\draw[thin] (7,4)--(7,3.9);
\draw[thin,dotted] (7,4)--(8,4);
\draw[thin] (0,0) rectangle (1,1);
\draw[thin] (0,1) rectangle (1,2);
\draw[thin] (0,2) rectangle (1,3);
\draw[thin] (0,3) rectangle (1,4);
\draw[thin] (1,1) rectangle (2,2);
\draw[thin] (1,2) rectangle (2,3);
\draw[thin] (1,3) rectangle (2,4);
\draw[thin] (2,1) rectangle (3,2);
\draw[thin] (2,2) rectangle (3,3);
\draw[thin] (2,3) rectangle (3,4);
\draw[thin] (3,1) rectangle (4,2);
\draw[thin] (3,2) rectangle (4,3);
\draw[thin] (3,3) rectangle (4,4);
\draw[thin] (4,3) rectangle (5,4);
\draw[thin,red] (-0.25,4.25)--(3.25,0.75);
\draw (0.15,-1.5) node {$\scriptscriptstyle{0}$};
\draw (0.15,-0.5) node {$\scriptscriptstyle{0}$};
\draw (0.5,-0.2) node {$\scriptscriptstyle{1}$};
\draw (1.15,0.5) node {$\scriptscriptstyle{0}$};
\draw (1.5,0.8) node {$\scriptscriptstyle{1}$};
\draw (2.5,0.8) node {$\scriptscriptstyle{1}$};
\draw (3.5,0.8) node {$\scriptscriptstyle{1}$};
\draw (4.15,1.5) node {$\scriptscriptstyle{0}$};
\draw (4.15,2.5) node {$\scriptscriptstyle{0}$};
\draw (4.5,2.8) node {$\scriptscriptstyle{1}$};
\draw (5.15,3.5) node {$\scriptscriptstyle{0}$};
\draw (5.5,3.8) node {$\scriptscriptstyle{1}$};
\draw (6.5,3.8) node {$\scriptscriptstyle{1}$};
\draw (21.5,1.5) node {$\longmapsto\quad\dots
0001011\red{\big|}10010111\dots = \; \dots 
s_{-3}s_{-2}s_{-1}\red{\big|}s_0s_1s_2\dots$};
\end{tikzpicture}
\end{center}

\noindent
Here we have put a marker in the $0/1$-sequence such that the number of
ones to the left of the marker is equal to the number of zeros to its
right. Equivalently, the marker corresponds to the $0/1$-sequence
crossing the main diagonal of the partition.
The $r$ subsequences $s^{(0)},\dots,s^{(r-1)}$ defined by
\[
s^{(i)}:=( s_{i+rj})_{j\in\mathbb{Z}}
=(\dots s_{i-2r}s_{i-r}\red{\big|}s_is_{i+r}\dots)
\quad\text{for $0\leq i\leq r-1$}
\]
correspond to the $0/1$-sequences of 
$\nu^{(0)},\dots,\nu^{(r-1)}$ forming the $r$-quotient $\boldsymbol{\nu}$.
Note that we have left the marker in its original position so that
the balancing of zeros and ones will generally not hold for the
individual $s^{(i)}$.
For example, when $r=3$ the subsequences $s^{(0)},s^{(1)},s^{(2)}$
and partitions $\nu^{(0)},\nu^{(1)},\nu^{(2)}$ corresponding to
the partition $(5,4,4,1)$ are given by
\begin{align*}
s^{(0)} & = \dots 0000\red{\big|}1111\dots & \nu^{(0)}&=0 \\
s^{(1)} & = \dots 0001\red{\big|}0011\dots & \nu^{(1)}&=(1,1) \\
s^{(2)} & = \dots 0011\red{\big|}0111\dots & \nu^{(2)}&=(2).
\end{align*}
To also obtain the $r$-core of $\la$ we move the zeros
in each of the $s^{(i)}$ to the left, like the beads on an 
abacus \cite{JK81}, and then reassemble the subsequences to form 
a single $0/1$-sequence. For our example this gives
\begin{align*}
s^{(0)}&=\dots 0000\red{\big|}1111\dots 
\mapsto \dots0000\red{\big|}1111\dots \\
s^{(1)}&=\dots 0001\red{\big|}0011\dots 
\mapsto \dots0000\red{\big|}0111\dots \;\longmapsto\;
 \dots0001\red{\big|}10111\dots\\
s^{(2)}&=\dots 0011\red{\big|}0111\dots \mapsto 
\dots0001\red{\big|}1111\dots
\end{align*}
so that the $3$-core of $(5,4,4,1)$ is $(2)$. Hence
\[
\phi_3(5,4,4,1)=\Big( (2),\big(0,(1,1),(2)\big)\Big).
\]
Alternatively, if we only interested in finding the $r$-core of $\la$,
we may colour the $0/1$-sequence $s(\la)$ with $r$ colours according
to the $r$ congruence classes formed by the position labels. 
Then we push all of the zeros of each of the $r$ colours to the left, past the
ones of that same colour, to obtain the $0/1$-sequence of its $r$-core.
In the case of our example this would give
\begin{multline}\label{Eq_colouredexample}
\dots 01011 \red{\big|}1001011\ldots \,\cong\,
\dots 
\blue{0}1\magenta{0}\blue{1}1 \red{\big|}
\magenta{1}\blue{0}0\magenta{1}\blue{0}1\magenta{1} 
\dots  \\
\stackrel{\phi_3}{\longmapsto}\,
\blue{0}0\magenta{0}\blue{0}1 \red{\big|}
\magenta{1}\blue{0}1\magenta{1}\blue{1}1\magenta{1}
\dots \, \cong \, \dots 00001 \red{\big|} 1011111\dots 
\end{multline}
It follows that if $\la$ and $\eta$ are partitions such that 
$\la_i\equiv\eta_i \pmod{r}$ then $\core{\la}=\core{\eta}$.\footnote{The
converse is not true, and for $\la,\eta\in\PP$ such that 
$\max\{\ell(\la),\ell(\eta)\}\leq n$, $\core{\la}=\core{\eta}$ 
if and only if $\la\equiv w(\eta+\delta_n)-\delta_n \pmod{r}$ for some
$w\in\mathfrak{S}_n$, see \cite[page 13]{Macdonald95}.} 
If $\la$ and $\eta$ are two partitions such that $\la_i=\eta_i$ for
all $i\neq j$ for some fixed $j$ and $\eta_j=\la_j-r$, then the $0/1$-sequences
of $\la$ and $\eta$ differ only in two places, a distance $r$ apart:
\[
\la: \dots \blue{0}\underbrace{\magenta{1}1\dots \green{1}
\blue{1}}_{r \text{ ones}} \dots,\qquad
\eta: \dots \underbrace{\blue{1}\magenta{1}1\dots 
\green{1}}_{r \text{ ones}}\blue{0} \dots,
\]
where the `dots' to the left and right are the same for $\la$ and $\eta$.
Hence $\core{\la}=\core{\eta}$.

Some key properties of Littlewood's decomposition \eqref{Eq_Litt-decomp}
are collected in the following proposition.
\begin{proposition}
Littlewood's decomposition
\begin{subequations}\label{Eq_Litt-decomp}
\begin{align}
\phi_r:\PP &\longrightarrow \CC_r\times \PP^r \\
\la &\longmapsto (\mu,\boldsymbol{\nu})
\end{align}
\end{subequations}
is a bijection such that
\begin{equation}\label{Eq_Lit2}
\abs{\la}=\abs{\mu}+r\,\abs{\boldsymbol{\nu}}
\end{equation}
and
\begin{equation}\label{Eq_Lit3}
\HH_{r}(\la)=r\HH(\boldsymbol{\nu}),
\end{equation}
where, for a set or multiset $S$, 
$rS:=\{rs: s\in S\}$.
\end{proposition}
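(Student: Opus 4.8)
The plan is to phrase everything in terms of the bi-infinite $0/1$-sequence $s=s(\la)$ introduced above. The one fact I would isolate first is a combinatorial reading of hook-lengths off $s$: the multiset $\HH(\la)$ equals the multiset of gaps
\[
\{\, j-i : i<j,\ s_i=1,\ s_j=0 \,\}.
\]
Indeed, each box of $\la$ is encoded by the pair consisting of the right step (a $1$) at the foot of its column and the up step (a $0$) at the end of its row, and the number of boundary steps separating these two edges is the hook-length $h(s)=a(s)+l(s)+1$; since $s_i=0$ for $i\ll 0$ and $s_i=1$ for $i\gg 0$, there are only finitely many such pairs, namely $\abs{\la}$ of them. (This is the standard beta-number/abacus description of hooks; cf.\ the references listed for $\phi_r$.) I would nail down the conventions here on a small example before proceeding, since this is the only genuinely convention-sensitive point.

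Granting the cornerstone, \eqref{Eq_Lit3} is immediate. A gap $j-i$ is divisible by $r$ exactly when $i\equiv j\Mod{r}$, i.e.\ when the $1$ at $i$ and the $0$ at $j$ lie on the same runner $s^{(a)}$ with $a\equiv i\Mod{r}$. Thus $\HH_r(\la)$ is the disjoint union over $a$ of the ``within-runner'' gaps of $s$. Writing positions on runner $a$ as $i=a+r\alpha$ and $j=a+r\beta$ with $\alpha<\beta$, such a gap equals $r(\beta-\alpha)$, and $\beta-\alpha$ runs precisely over the gaps of the subsequence $s^{(a)}$, which encodes $\nu^{(a)}$. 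Since the gaps of a $0/1$-sequence do not depend on the position of the marker, the cornerstone applied to $\nu^{(a)}$ identifies the runner-$a$ contribution as $r\HH(\nu^{(a)})$, and taking the union over $0\leq a\leq r-1$ gives $\HH_r(\la)=r\HH(\boldsymbol{\nu})$.

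For \eqref{Eq_Lit2} I would argue by abacus moves rather than by counting gaps directly. By the observation made just before the statement, interchanging two entries of $s$ a distance $r$ apart --- equivalently, an elementary move on a single runner --- changes $\abs{\la}$ by exactly $r$; on runner $a$ this is the removal (or addition) of one box of $\nu^{(a)}$. Sliding every $0$ as far to the left as possible on each runner empties all of the $\nu^{(a)}$ and produces the sequence $s(\mu)$ of the $r$-core; the number of elementary moves needed is $\sum_{a=0}^{r-1}\abs{\nu^{(a)}}=\abs{\boldsymbol{\nu}}$, each lowering the size by $r$. Hence $\abs{\la}-\abs{\mu}=r\abs{\boldsymbol{\nu}}$, which is \eqref{Eq_Lit2}.

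Finally, for the bijection statement \eqref{Eq_Litt-decomp} I would observe that deinterleaving $s\mapsto(s^{(0)},\dots,s^{(r-1)})$ is a bijection between $0/1$-sequences and $r$-tuples of $0/1$-sequences. Each runner $s^{(a)}$ splits uniquely into a shape $\nu^{(a)}\in\PP$ and a charge $c_a\in\Z$ (the signed imbalance of its inherited marker), and the overall balance of $s$ forces $\sum_a c_a=0$. The shapes $\boldsymbol{\nu}\in\PP^r$ are unconstrained, while the emptying process above shows that $\mu=\core{\la}$ depends only on the charge vector $(c_0,\dots,c_{r-1})$; conversely the charge vector is recovered by deinterleaving $s(\mu)$, giving the classical bijection between $\CC_r$ and balanced charge vectors. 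Reinstating the shapes $\nu^{(a)}$ on the runners of $s(\mu)$ and reinterleaving inverts $\phi_r$. I expect the main obstacle to be bookkeeping rather than a single hard idea: fixing the conventions in the cornerstone so that the gap $j-i$ is the hook-length and not the arm- or leg-length, and checking that the charge/shape splitting of each runner is compatible with reinterleaving, so that the constructed inverse is genuinely two-sided. Once these are pinned down, \eqref{Eq_Lit3} and \eqref{Eq_Lit2} follow as above.
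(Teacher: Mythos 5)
Your proposal is correct, and it matches the paper: the paper states this proposition without proof (it is Littlewood's classical result, with the $0/1$-sequence/abacus construction only sketched in Section~\ref{Sec_Littlewood}), and your argument is exactly the standard completion of that sketch. Your ``cornerstone'' identification of $\HH(\la)$ with the gaps between $1$s and later $0$s is the one ingredient the paper leaves implicit, and you use it correctly under the paper's conventions, with the runner-wise analysis giving \eqref{Eq_Lit3}, the bead-sliding count (each distance-$r$ swap lowering $\abs{\la}$ by exactly $r$, as the paper observes) giving \eqref{Eq_Lit2}, and the charge/shape splitting of the runners giving bijectivity.
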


In the case of the above example the respective hook-lengths are

\begin{center}
\begin{tikzpicture}[scale=0.3,baseline=0cm]
\tikzmath{\x=24;}

\draw (0,0)--(5,0);
\draw (0,-1)--(5,-1);
\draw (0,-2)--(4,-2);
\draw (0,-3)--(4,-3);
\draw (0,-4)--(1,-4);
\draw (0,0)--(0,-4);
\draw (1,0)--(1,-4);
\draw (2,0)--(2,-3);
\draw (3,0)--(3,-3);
\draw (4,0)--(4,-3);
\draw (5,0)--(5,-1);
\draw (-5.1,-1.5) node {$\HH_3(5,4,4,1) = $};
\draw (1.5,-0.5) node {$\scriptstyle 6$};
\draw (0.5,-1.5) node {$\scriptstyle 6$};
\draw (2.5,-1.5) node {$\scriptstyle 3$};
\draw (1.5,-2.5) node {$\scriptstyle 3$};
\draw (\x,-1.5) node {$\emptyset$\;,\hspace{6mm},};
\draw (\x,-0.5)--(\x+1,-0.5);
\draw (\x,-1.5)--(\x+1,-1.5);
\draw (\x,-2.5)--(\x+1,-2.5);
\draw (\x,-0.5)--(\x,-2.5);
\draw (\x+1,-0.5)--(\x+1,-2.5);
\draw (\x+2.5,-1)--(\x+4.5,-1);
\draw (\x+2.5,-2)--(\x+4.5,-2);
\draw (\x+2.5,-1)--(\x+2.5,-2);
\draw (\x+3.5,-1)--(\x+3.5,-2);
\draw (\x+4.5,-1)--(\x+4.5,-2);
\draw (\x-8.3,-1.5) node {$\HH\big(0,(1,1),(2)\big) = \Big($};
\draw (\x+5.3,-1.5) node {$\Big)$,};
\draw (\x+0.5,-1) node {$\scriptstyle 2$};
\draw (\x+0.5,-2) node {$\scriptstyle 1$};
\draw (\x+3,-1.5) node {$\scriptstyle 2$};
\draw (\x+4,-1.5) node {$\scriptstyle 1$};
\end{tikzpicture}
\end{center}
\smallskip
consistent with \eqref{Eq_Lit3}.

It follows from the properties of $\phi_r$, in particular bijectivity 
and \eqref{Eq_Lit2}, that 
\begin{equation}\label{Eq_GFPPrmu}
\sum_{\substack{\la\in\PP \\[1pt] \core{\la}=\omega}} T^{\abs{\la}}=
\frac{T^{\abs{\omega}}}{(T^r;T^r)_{\infty}^r},
\end{equation}
where $\omega\in\CC_r$. 
Summing both sides over $\omega\in\CC_r$, the left-hand side becomes the 
ordinary generating function for partitions. Solving
for $\sum_{\omega\in\CC_r} T^{\abs{\omega}}$
then yields \cite{Klyachko82}
\begin{equation}\label{Eq_GF-rcores}
\sum_{\omega\in\CC_r} T^{\abs{\omega}}=
\frac{(T^r;T^r)_{\infty}^r}{(T;T)_{\infty}}.
\end{equation}

\section{The multiplication theorem of Han and Ji}\label{Sec_Multiplication-thm}

The Han--Ji multiplication theorem, stated as Theorem~\ref{Thm_HanJi}
in the introduction, provides a simple mechanism to obtain modular
analogues of many known hook-length formulas.
For example, by combining the hook-length formula \eqref{Eq_FRT} with the 
Robinson--Schensted correspondence \cite{Robinson38,Schensted61} between
permutations and pairs of standard Young tableaux, it follows 
that\footnote{To avoid RS one may alternatively use that 
(i) $f^{\la}$ gives the dimension of the irreducible (complex) 
$\mathfrak{S}_n$-module indexed by the partition $\la$, 
(ii) the sum of the squares of the dimensions of the irreducible
$G$-modules of a finite group $G$ is equal to the order of $G$.
Hence $\sum_{\la\vdash n} (f^{\la})^2=n!$, which, by \eqref{Eq_FRT},
is equivalent to \eqref{Eq_hook}.}
\begin{equation}\label{Eq_hook}
\sum_{\la\in\PP} T^{\abs{\la}} \prod_{h\in\HH(\la)}\frac{1}{h^2}
=\eup^T.
\end{equation}
After the substitution $T\mapsto T/r^2$ this takes the form 
\[
\sum_{\la\in\PP} T^{\abs{\la}} \prod_{h\in\HH(\la)}\frac{1}{(rh)^2},
=\eup^{T/r^2}
\]
Comparing this with \eqref{Eq_seed} it follows that for $\rho(h)=1/h^2$
we have the closed-form expression for $f_r(T)$ given by
$f_r(T)=\exp(T/r^2)$.
By \eqref{Eq_trafo} we thus obtain the modular analogue
(see \cite[Corollary 5.4]{Han10})
\begin{equation}\label{Eq_hookp}
\sum_{\la\in\PP} T^{\abs{\la}} \prod_{h\in\HH_r(\la)}\frac{S}{h^2}
=\eup^{ST^r/r}\frac{(T^r;T^r)_{\infty}^r}{(T;T)_{\infty}}.
\end{equation}
Similarly, by replacing $z\mapsto z/r^2$ in \eqref{Eq_NO}, 
Han's modular analogue of the Nekrasov--Okounkov formula \eqref{Eq_NOmod}
follows from the multiplication theorem with
\[
\rho(h)=1-\frac{z}{h^2} \quad\text{and}\quad f_r(T)=(T;T)_{\infty}^{z/r^2-1}.
\]

\medskip

The proof of Theorem~\ref{Thm_HanJi} is surprisingly simple,
and follows in a few elementary steps from Littlewood's decomposition.
Using all of \eqref{Eq_Litt-decomp}--\eqref{Eq_Lit3} 
(with $\mu\mapsto\omega$) and also noting that, by \eqref{Eq_Lit3}, 
\[
S^{\abs{\HH_r(\la)}}=S^{\abs{\boldsymbol{\nu}}}=
S^{\sum_{i=0}^{r-1} \abs{\nu^{(i)}}},
\]
we get
\[
\sum_{\substack{\la\in\PP \\[1pt] \core{\la}=\omega}}  
T^{\abs{\la}} S^{\abs{\HH_r(\la)}} 
\prod_{h\in \HH_r(\la)} \rho(h) 
=T^{\abs{\omega}}
\Bigg( \sum_{\nu\in\PP} (ST^r)^{\abs{\nu}} \prod_{h\in \HH(\nu)} 
\rho(rh)\Bigg)^r
\]
for $\omega\in\CC_r$.
By \eqref{Eq_seed} with $(T,a)\mapsto(ST^r,r)$ this simplifies to
\begin{equation}\label{Eq_fix-rcore}
\sum_{\substack{\la\in\PP \\[1pt] \core{\la}=\omega}}  
T^{\abs{\la}} S^{\abs{\HH_r(\la)}} 
\prod_{h\in\HH_r(\la)} \rho(h)
=T^{\abs{\omega}} \big(f_r(ST^r)\big)^r.
\end{equation}
Summing $\omega$ over $\CC_r$, and using \eqref{Eq_GF-rcores}
for the generating function of $r$-cores, \eqref{Eq_trafo} follows.

If we divide both sides of \eqref{Eq_fix-rcore} by $T^{\abs{\omega}}$,
use that $\abs{\HH_r(\la)}=(\abs{\la}-\abs{\omega})/r$ and finally
replace $ST^r$ by $T$,
we obtain a variant of the multiplication theorem without the factor
\[
\frac{(T^r;T^r)_{\infty}^r}{(T;T)_{\infty}}.
\]

\begin{theorem}[Modified Han--Ji multiplication theorem]\label{Thm_HanJi-mod}
For $r$ a positive integer and $\rho$ a function on the positive integers,
let $f_r(T)$ be defined by \eqref{Eq_seed}. Then
\[
\sum_{\substack{\la\in\PP \\[1pt] \core{\la}=\omega}} 
T^{(\abs{\la}-\abs{\omega})/r} \prod_{h\in \HH_r(\la)} \rho(h)
=\big(f_r(T)\big)^r,
\]
where $\omega$ is an $r$-core.
\end{theorem}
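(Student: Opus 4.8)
The plan is to derive the statement as an immediate specialisation of the fixed-$r$-core identity \eqref{Eq_fix-rcore}, which has already been obtained directly from the properties \eqref{Eq_Litt-decomp}--\eqref{Eq_Lit3} of Littlewood's decomposition. All of the combinatorial content resides in \eqref{Eq_fix-rcore}, so what remains is a short bookkeeping computation in the formal variables $S$ and $T$ for a fixed $r$-core $\omega$.

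First I would take \eqref{Eq_fix-rcore} and divide both sides by the monomial $T^{\abs{\omega}}$; this is legitimate because $\omega$ is fixed, and it both removes the prefactor $T^{\abs{\omega}}$ on the right and turns $T^{\abs{\la}}$ into $T^{\abs{\la}-\abs{\omega}}$ on the left. Next I would collapse the two exponents into a single one. Writing $\phi_r(\la)=(\omega,\boldsymbol{\nu})$, identity \eqref{Eq_Lit2} gives $\abs{\boldsymbol{\nu}}=(\abs{\la}-\abs{\omega})/r$, while \eqref{Eq_Lit3} together with the fact that rescaling a multiset by $r$ preserves its cardinality gives $\abs{\HH_r(\la)}=\abs{\HH(\boldsymbol{\nu})}=\abs{\boldsymbol{\nu}}$. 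Hence $\abs{\HH_r(\la)}=(\abs{\la}-\abs{\omega})/r$, and the two monomials combine as
\[
T^{\abs{\la}-\abs{\omega}}\,S^{\abs{\HH_r(\la)}}
=\big(ST^r\big)^{(\abs{\la}-\abs{\omega})/r}.
\]

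Finally I would substitute $ST^r\mapsto T$, after which the identity reads exactly as claimed. I expect no genuine obstacle in this argument: the only two points needing a word of justification are the equality $\abs{\HH_r(\la)}=(\abs{\la}-\abs{\omega})/r$ used above, and the observation that the substitution $ST^r\mapsto T$ is well defined on formal power series. The latter holds because, for every $\la$ with $\core{\la}=\omega$, the quantity $(\abs{\la}-\abs{\omega})/r$ is a nonnegative integer, so each term on both sides is an honest nonnegative integer power of $ST^r$ and the replacement simply renames that variable.
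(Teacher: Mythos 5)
Your proposal is correct and follows essentially the same route as the paper, which also derives the theorem by dividing the fixed-core identity \eqref{Eq_fix-rcore} by $T^{\abs{\omega}}$, invoking $\abs{\HH_r(\la)}=(\abs{\la}-\abs{\omega})/r$ (a consequence of \eqref{Eq_Lit2} and \eqref{Eq_Lit3}), and then replacing $ST^r$ by $T$. Your added remarks justifying that $\abs{\HH_r(\la)}=\abs{\HH(\boldsymbol{\nu})}=\abs{\boldsymbol{\nu}}$ and that the renaming $ST^r\mapsto T$ is legitimate on formal power series are accurate refinements of points the paper leaves implicit.
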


The reason for stating this alternative version is that in 
Section~\ref{Sec_qt-modular} we discuss some modular 
Nekrasov--Okounkov-type series of the form
\[
\sum_{\substack{\la\in\PP \\[1pt] \core{\la}=\omega}} 
T^{(\abs{\la}-\abs{\omega})/r} \rho_r(\la)
\]
for which the observed `$\omega$-independence' of the sum cannot 
simply be explained by the Littlewood decomposition.

\section{An analogue of the Han--Ji multiplication theorem}\label{Sec_New-multiplication-thm}

In this section we describe a new Littlewood-like decomposition which
implies the following analogue of the Theorem~\ref{Thm_HanJi} 

\begin{subtheorem}{theorem}\label{Thm_new}
\begin{theorem}\label{Thm_multiplication-new}
For $r$ a positive integer and $\rho$ a function on the positive integers,
let $f_r(T)$ be the formal power series defined by
\begin{equation}\label{Eq_input}
f_r(T):=\sum_{\la\in\PP} T^{\abs{\la}} 
\prod_{h\in\HHbeen(\la)} \rho(rh).
\end{equation}
Then
\begin{equation}\label{Eq_output}
\sum_{\substack{\la\in\PP \\[1pt] \core{\la}=\omega}} 
T^{(\abs{\la}-\abs{\omega})/r} 
\prod_{h\in\HHb{r}(\la)} \rho(h) = 
\frac{f_r(T)}{(T;T)_{\infty}^{r-1}},
\end{equation}
where $\omega$ is an $r$-core.
\end{theorem}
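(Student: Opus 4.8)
The plan rests on the observation that the bottom-square hooks of $\la$ are governed entirely by its consecutive part-differences. Indeed, the columns of $\la$ of height exactly $i$ are those indexed by $\la_{i+1}<j\leq\la_i$, and the bottom square of such a column has hook-length $\la_i-j+1$; letting $j$ run over this range shows that
\[
\HHbeen(\la)=\bigcup_{i\geq1}\{1,2,\dots,\la_i-\la_{i+1}\},
\qquad
\HHb{r}(\la)=\bigcup_{i\geq1}\{r,2r,\dots,r\floor{(\la_i-\la_{i+1})/r}\},
\]
as multisets. In particular the product $\prod_{h\in\HHb{r}(\la)}\rho(h)$ depends only on the multiset of differences $d_i:=\la_i-\la_{i+1}$, not on the parts of $\la$ themselves.

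This suggests the following Littlewood-like decomposition. I would encode $\la$ by its (finitely supported) difference sequence $(d_i)_{i\geq1}$ and Euclidean-divide, $d_i=rq_i+\rho_i$ with $0\leq\rho_i\leq r-1$. Define $\mu,\eta\in\PP$ by $\mu_i:=\sum_{k\geq i}q_k$ and $\eta_i:=\sum_{k\geq i}\rho_k$, so that $\mu$ has difference sequence $(q_i)$ and $\eta$ has difference sequence $(\rho_i)$. Since $(d_i)\mapsto\big((q_i),(\rho_i)\big)$ is a bijection and, by \eqref{Eq_rkernels}, the inequalities $\rho_i\leq r-1$ are exactly the condition for $\eta$ to be an $r$-kernel, the map $\la\mapsto(\mu,\eta)$ is a bijection $\PP\to\PP\times\KK_r$. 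Three properties then need to be checked, all flowing from $d_i=rq_i+\rho_i$ together with $\abs{\la}=\sum_i i\,d_i$: the size splits as $\abs{\la}=r\abs{\mu}+\abs{\eta}$; the displayed description of $\HHb{r}$ gives $\HHb{r}(\la)=r\,\HHbeen(\mu)$ at once, since $\mu_i-\mu_{i+1}=q_i=\floor{d_i/r}$; and because $\la_i=\sum_{k\geq i}d_k\equiv\sum_{k\geq i}\rho_k=\eta_i\Mod{r}$, the congruence criterion recorded after \eqref{Eq_colouredexample} yields $\core{\la}=\core{\eta}$.

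Granting these properties, \eqref{Eq_output} follows by a short computation. Fixing an $r$-core $\omega$, the decomposition restricts to a bijection $\{\la:\core{\la}=\omega\}\leftrightarrow\PP\times\{\eta\in\KK_r:\core{\eta}=\omega\}$, and since $(\abs{\la}-\abs{\omega})/r=\abs{\mu}+(\abs{\eta}-\abs{\omega})/r$ while the $\rho$-weight only sees $\mu$, the sum factorises as
\[
\sum_{\substack{\la\in\PP\\\core{\la}=\omega}}T^{(\abs{\la}-\abs{\omega})/r}\prod_{h\in\HHb{r}(\la)}\rho(h)
=\Bigg(\sum_{\mu\in\PP}T^{\abs{\mu}}\prod_{h\in\HHbeen(\mu)}\rho(rh)\Bigg)
\Bigg(\sum_{\substack{\eta\in\KK_r\\\core{\eta}=\omega}}T^{(\abs{\eta}-\abs{\omega})/r}\Bigg).
\]
By \eqref{Eq_input} the first factor is exactly $f_r(T)$, so the theorem comes down to the purely core-theoretic identity
\[
\sum_{\substack{\eta\in\KK_r\\\core{\eta}=\omega}}T^{(\abs{\eta}-\abs{\omega})/r}=\frac{1}{(T;T)_{\infty}^{r-1}}.
\]

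This kernel identity is where I expect the real difficulty to lie, its content being an ``$\omega$-independence'': the generating function of $r$-kernels with prescribed core is the same for every core. The cleanest route I see is to prove this independence directly, by a size-preserving bijection $\{\eta\in\KK_r:\core{\eta}=\omega\}\leftrightarrow\{\eta\in\KK_r:\core{\eta}=\omega'\}$ matching $(\abs{\eta}-\abs{\omega})/r$. Writing $G(T)$ for the resulting common value, one then sums the identity (with $T\mapsto T^r$) weighted by $T^{\abs{\omega}}$ over $\omega\in\CC_r$: the left-hand side becomes $\sum_{\eta\in\KK_r}T^{\abs{\eta}}$, while the right-hand side becomes $G(T^r)\,(T^r;T^r)_{\infty}^r/(T;T)_{\infty}$ by \eqref{Eq_GF-rcores}; comparing with the case $z=1$ of \eqref{Eq_GF-rkernels}, namely $\sum_{\eta\in\KK_r}T^{\abs{\eta}}=(T^r;T^r)_{\infty}/(T;T)_{\infty}$, forces $G(T)=(T;T)_{\infty}^{-(r-1)}$. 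Equivalently, one may feed the kernels into Littlewood's map \eqref{Eq_Litt-decomp} and show that, for fixed core, the associated $r$-quotients form a family with size generating function $(T;T)_{\infty}^{-(r-1)}$. I anticipate this to be the crux, since already for $r=2$ small examples show that the component of the quotient forced to vanish is \emph{not} the same for all kernels of a given core; the bijection with $\PP^{r-1}$ must therefore be set up with some care, most naturally on the $r$-runner abacus, where the kernel condition $\eta_i-\eta_{i+1}<r$ reads as a bound of $r-1$ on the gaps between consecutive beads.
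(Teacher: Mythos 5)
Your decomposition is exactly the paper's map $\psi_r$ (your $(\mu,\eta)$ is the paper's $(\nu,\mu)$), and everything up to the factorisation is correct: the multiset identity $\HHbeen(\la)=\bigcup_{i\geq1}\{1,\dots,\la_i-\la_{i+1}\}$, the bijection $\PP\to\PP\times\KK_r$ via Euclidean division of the part-differences, the three properties, and the reduction of \eqref{Eq_output} to the kernel identity
\[
\sum_{\substack{\eta\in\KK_r\\[1pt] \core{\eta}=\omega}}
T^{(\abs{\eta}-\abs{\omega})/r}=\frac{1}{(T;T)_{\infty}^{r-1}},
\]
which is Lemma~\ref{Lem_GF-rkernels-omega} of the paper in disguise. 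But at precisely this point your argument stops. You declare the kernel identity ``the crux'', propose to establish the $\omega$-independence by an explicit size-matching bijection between $\{\eta\in\KK_r:\core{\eta}=\omega\}$ and $\{\eta\in\KK_r:\core{\eta}=\omega'\}$ (or by characterising the image of $\KK_r$ under Littlewood's map $\phi_r$), and construct neither; your own remarks about the $r=2$ case show you do not have such a bijection in hand. The two-step plan (bijective independence, then pinning down the common value $G(T)$ by summing over $\CC_r$) is logically sound, but its first step is missing, so the proof is incomplete at its essential point.

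The missing observation is that no new bijection is needed: the kernel identity falls out of the machinery you have already built, combined with the classical Littlewood decomposition. Set $\rho\equiv 1$ in your own factorisation. The left-hand side becomes $\sum_{\core{\la}=\omega}T^{(\abs{\la}-\abs{\omega})/r}$, which by \eqref{Eq_GFPPrmu} (divide by $T^{\abs{\omega}}$ and replace $T\mapsto T^{1/r}$) equals $1/(T;T)_{\infty}^{r}$, while the first factor on the right is $\sum_{\mu\in\PP}T^{\abs{\mu}}=1/(T;T)_{\infty}$. Dividing, the kernel generating function is $1/(T;T)_{\infty}^{r-1}$ for \emph{every} $\omega\in\CC_r$: the $\omega$-independence you planned to prove bijectively is a consequence, not a prerequisite. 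This is exactly how the paper proves Lemma~\ref{Lem_GF-rkernels-omega} --- it pushes $\sum_{\core{\la}=\omega}T^{\abs{\la}}$ through $\psi_r$ and compares with \eqref{Eq_GFPPrmu}, which is known independently from $\phi_r$, so there is no circularity. With that one line inserted, your argument closes and coincides with the paper's proof.
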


Equation \eqref{Eq_output} may be replaced by an expression
which includes a factor representing $r$-cores. This is to be
compared with Theorem~\ref{Thm_HanJi}.

\begin{theorem}\label{Thm_multiplication-new-2}
For $r$ a positive integer and $\rho$ a function on the positive integers,
let $f_r(T)$ be defined by \eqref{Eq_input}. Then
\begin{equation}\label{Eq_output-mod}
\sum_{\la\in\PP}T^{\abs{\la}} S^{\abs{\HH_r(\la)}}
\prod_{h\in\HHb{r}(\la)} \rho(h) 
=\frac{(T^r;T^r)_{\infty}^r}{(T;T)_{\infty}(ST^r;ST^r)_{\infty}^{r-1}}
\,f_r(ST^r).
\end{equation}
\end{theorem}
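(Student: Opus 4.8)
The plan is to deduce Theorem~\ref{Thm_multiplication-new-2} from the fixed-core version Theorem~\ref{Thm_multiplication-new}, mimicking exactly the passage from Theorem~\ref{Thm_HanJi-mod} to Theorem~\ref{Thm_HanJi} carried out in Section~\ref{Sec_Multiplication-thm}. The one ingredient I would record first is the identity $\abs{\HH_r(\la)}=(\abs{\la}-\abs{\omega})/r$ for $\omega=\core{\la}$, which is immediate from Littlewood's decomposition: by \eqref{Eq_Lit3} the multiset $\HH_r(\la)=r\HH(\boldsymbol{\nu})$ has cardinality $\abs{\boldsymbol{\nu}}$, and by \eqref{Eq_Lit2} this equals $(\abs{\la}-\abs{\omega})/r$. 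Thus the exponent of $T$ appearing on the left of \eqref{Eq_output} is precisely $\abs{\HH_r(\la)}$.

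First I would make the substitution $T\mapsto ST^r$ in \eqref{Eq_output}. Using the identity above, the monomial on the left becomes $(ST^r)^{(\abs{\la}-\abs{\omega})/r}=S^{\abs{\HH_r(\la)}}\,T^{\abs{\la}-\abs{\omega}}$, so after multiplying both sides by $T^{\abs{\omega}}$ the summand turns into $T^{\abs{\la}}S^{\abs{\HH_r(\la)}}\prod_{h\in\HHb{r}(\la)}\rho(h)$, matching the left-hand side of \eqref{Eq_output-mod}. The right-hand side at this stage reads $T^{\abs{\omega}}f_r(ST^r)\big/(ST^r;ST^r)_{\infty}^{r-1}$.

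Next I would sum over all $r$-cores $\omega\in\CC_r$. Since every $\la\in\PP$ has a unique $r$-core, the fibres $\{\la:\core{\la}=\omega\}$ partition $\PP$, so the left-hand side collapses to the unrestricted sum in \eqref{Eq_output-mod}. On the right only the prefactor $T^{\abs{\omega}}$ depends on $\omega$, and summing it over $\CC_r$ yields $(T^r;T^r)_{\infty}^r/(T;T)_{\infty}$ by \eqref{Eq_GF-rcores}; assembling the factors then gives exactly the claimed right-hand side.

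Because Theorem~\ref{Thm_multiplication-new} is available, this deduction is purely a generating-function bookkeeping step, and I expect no genuine obstacle. The only points demanding care are the cardinality identity $\abs{\HH_r(\la)}=(\abs{\la}-\abs{\omega})/r$ and the legitimacy of the substitution $T\mapsto ST^r$ at the level of formal power series, which is unproblematic since the $T$-exponent on the left of \eqref{Eq_output} is a nonnegative integer, so $ST^r$ has positive $T$-order and no negative powers of $S$ arise. All the real difficulty sits in the fixed-core statement Theorem~\ref{Thm_multiplication-new}, which relies on the new Littlewood-like decomposition, rather than in this corollary-style passage.
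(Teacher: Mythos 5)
Your proposal is correct and is essentially the paper's own argument: the paper deduces \eqref{Eq_output-mod} from the same fixed-core identity underlying \eqref{Eq_output} by substituting $T\mapsto TS^{1/r}$, multiplying by $S^{-\abs{\omega}/r}$, summing over $\omega\in\CC_r$ via \eqref{Eq_GF-rcores}, and invoking $(\abs{\la}-\abs{\omega})/r=\abs{\HH_r(\la)}$ from \eqref{Eq_Lit2} and \eqref{Eq_Lit3}. The composite of those operations is exactly your substitution $T\mapsto ST^r$ in \eqref{Eq_output} followed by multiplication by $T^{\abs{\omega}}$ and the sum over cores, so the two derivations coincide step for step.
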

\end{subtheorem}

\medskip

Let $\{\cdot\}:\mathbb{R}\to [0,1)$ and 
$\floor{\cdot}:\mathbb{R}\to\mathbb{Z}$ be the fractional-part and
floor functions respectively, and recall that $\KK_r$ is the set
of $r$-kernels, see \eqref{Eq_rkernels}.
We define the Littlewood-like map
\begin{align*}
\psi_r:\PP&\longrightarrow \KK_r\times\PP \\
\la &\longmapsto (\mu,\nu)
\end{align*}
by 
\begin{subequations}
\begin{align}
\label{Eq_numu}
\mu_i&=r\sum_{j\geq i} \{(\la_j-\la_{j+1})/r\} \\
\nu_i&=\sum_{j\geq i} \floor{(\la_j-\la_{j+1})/r}
\label{Eq_omegamu}
\end{align}
\end{subequations}
for all $i\geq 1$.
Since $\la$ is a partition, it is clear that both $\mu$ and $\nu$ are 
partitions. Moreover, since
\[
\mu_i-\mu_{i+1}=r\{(\la_i-\la_{i+1})/r\}\in\{0,1,\dots,r-1\},
\]
it follows that $\mu$ is an $r$-kernel.

Graphically, $\mu$ and $\nu$ are obtained from $\la$ by identifying
the bottom squares of $\la$ with hook-lengths congruent to $0$ modulo $r$,
and colouring the columns spanned by these squares as well as the $r-1$ 
columns immediately to the right of these.
The coloured squares of $\la$ then yield $r\,\nu$ and the
remaining white squares form $\mu$.
For example, if $r=3$ and $\la=(14,6,6,1)$, then $\mu=(5,3,3,1)$
and $\nu=(3,1,1)$:

\medskip

\begin{center}
\tikzmath{\x=13;}
\tikzmath{\y=13;}

\begin{tikzpicture}[scale=0.3,line width=0.3pt]
\begin{scope}[color=blue!20]
\fill(8,-1) rectangle (14,0);
\fill(3,-3) rectangle (6,0);
\end{scope}
\begin{scope}[color=blue!70]
\fill(11,-1) rectangle (12,0);
\fill(8,-1) rectangle (9,0);
\fill(3,-3) rectangle (4,-2);
\end{scope}
\draw (0,0)--(14,0);
\draw (0,-1)--(14,-1);
\draw (0,-2)--(6,-2);
\draw (0,-3)--(6,-3);
\draw (0,-4)--(1,-4);
\draw (0,0)--(0,-4);
\draw (1,0)--(1,-4);
\draw (2,0)--(2,-3);
\draw (3,0)--(3,-3);
\draw (4,0)--(4,-3);
\draw (5,0)--(5,-3);
\draw (6,0)--(6,-3);
\draw (7,0)--(7,-1);
\draw (8,0)--(8,-1);
\draw (9,0)--(9,-1);
\draw (10,0)--(10,-1);
\draw (11,0)--(11,-1);
\draw (12,0)--(12,-1);
\draw (13,0)--(13,-1);
\draw (14,0)--(14,-1);
\draw (-1.7,-1.75) node {$\la=$};
\draw (16.4,-2) node {$\longmapsto$};
\draw (19.3,-2) node {$\mu=$};
\draw (21,0)--(26,0);
\draw (21,-1)--(26,-1);
\draw (21,-2)--(24,-2);
\draw (21,-3)--(24,-3);
\draw (21,-4)--(22,-4);
\draw (21,0)--(21,-4);
\draw (22,0)--(22,-4);
\draw (23,0)--(23,-3);
\draw (24,0)--(24,-3);
\draw (25,0)--(25,-1);
\draw (26,0)--(26,-1);
\draw (28.5,-2) node {$,\hspace{3.0mm} \nu=$};
\begin{scope}[color=blue!70]
\fill(32,-1) rectangle (34,0);
\fill(31,-3) rectangle (32,-2);
\end{scope}
\draw (31,0)--(34,0);
\draw (31,-1)--(34,-1);
\draw (31,-2)--(32,-2);
\draw (31,-3)--(32,-3);
\draw (31,0)--(31,-3);
\draw (32,0)--(32,-3);
\draw (33,0)--(33,-1);
\draw (34,0)--(34,-1);
\end{tikzpicture}
\end{center}
where the bottom squares of $\la$ with hook-lengths congruent to
$0$ modulo $3$ as well as the bottom squares of $\nu$ have
been marked in dark blue.

Although $\psi_r$ is much simpler than Littlewood's map $\phi_r$,
it has many properties in common with the latter.

\begin{proposition}\label{Prop_properties}
The decomposition 
\begin{subequations}\label{Eq_Litt-decomp-new}
\begin{align}
\psi_r:\PP&\longrightarrow \KK_r\times\PP \\
\la &\longmapsto (\mu,\nu)
\end{align}
\end{subequations}
defined by \eqref{Eq_numu} and \eqref{Eq_omegamu} is a bijection 
such that
\begin{align}
\core{\la}&=\core{\mu}, \label{Eq_samecore} \\[2mm]
\abs{\la}&=\abs{\mu}+r\abs{\nu}, \label{Eq_size-lamurnu} \\[2mm]
\label{Eq_HHtorHH}
\HHb{r}(\la)&=r\HHbeen(\nu) .
\end{align}
\end{proposition}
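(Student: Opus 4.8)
The plan is to reduce everything to the single pointwise identity
\[
\la_i = \mu_i + r\nu_i \qquad (i\geq 1),
\]
which follows by telescoping. Writing $\la_i=\sum_{j\geq i}(\la_j-\la_{j+1})$ and splitting each difference as $\la_j-\la_{j+1}=r\{(\la_j-\la_{j+1})/r\}+r\floor{(\la_j-\la_{j+1})/r}$, a comparison with \eqref{Eq_numu} and \eqref{Eq_omegamu} gives $\mu_i+r\nu_i=\sum_{j\geq i}(\la_j-\la_{j+1})=\la_i$. Summing this over $i$ yields the size relation \eqref{Eq_size-lamurnu}. Since $\la_i\equiv\mu_i\pmod{r}$ for every $i$, the congruence criterion recorded in Section~\ref{Sec_Littlewood} immediately gives $\core{\la}=\core{\mu}$, which is \eqref{Eq_samecore}. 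That $\mu$ is an $r$-kernel and $\nu$ a partition was already verified in the text preceding the proposition.

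For bijectivity I would exhibit the inverse explicitly. Given $(\mu,\nu)\in\KK_r\times\PP$, set $\la_i:=\mu_i+r\nu_i$. Then $\la_i-\la_{i+1}=(\mu_i-\mu_{i+1})+r(\nu_i-\nu_{i+1})$, where $0\leq\mu_i-\mu_{i+1}\leq r-1$ because $\mu$ is an $r$-kernel (see \eqref{Eq_rkernels}) and $\nu_i-\nu_{i+1}\geq 0$ because $\nu$ is a partition; hence $\la$ is a partition, and this is precisely the Euclidean division of $\la_i-\la_{i+1}$ by $r$. Uniqueness of Euclidean division then forces $r\{(\la_i-\la_{i+1})/r\}=\mu_i-\mu_{i+1}$ and $\floor{(\la_i-\la_{i+1})/r}=\nu_i-\nu_{i+1}$, so telescoping returns $\psi_r(\la)=(\mu,\nu)$. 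Conversely, for an arbitrary $\la$ the identity $\la_i=\mu_i+r\nu_i$ shows that applying this construction to $\psi_r(\la)$ recovers $\la$, so the two maps are mutually inverse.

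The substance lies in the hook identity \eqref{Eq_HHtorHH}, and I would isolate its combinatorial core as a lemma valid for every partition $\eta$: as multisets,
\[
\HHbeen(\eta)=\bigcup_{p\geq 1}\{1,2,\ldots,\eta_p-\eta_{p+1}\}.
\]
To prove it, observe that the bottom squares of $\eta$ are indexed by the columns $j\in\{1,\ldots,\eta_1\}$, the one in column $j$ occupying row $\eta'_j$; this square lies in row $p$ precisely when $\eta_{p+1}<j\leq\eta_p$. Being a bottom square it has leg-length $0$, so its hook-length equals $a(s)+1=(\eta_p-j)+1$. As $j$ runs through $\{\eta_{p+1}+1,\ldots,\eta_p\}$ this value runs through $\{1,\ldots,\eta_p-\eta_{p+1}\}$, and collecting the contributions of all rows proves the lemma (rows with $\eta_p=\eta_{p+1}$ contributing nothing).

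Granting the lemma, \eqref{Eq_HHtorHH} drops out by extracting the multiples of $r$ row by row. From row $p$ of $\la$ the bottom-square hook-lengths divisible by $r$ are $r\cdot\{1,\ldots,\floor{(\la_p-\la_{p+1})/r}\}$, whence
\[
\HHb{r}(\la)=r\bigcup_{p\geq 1}\{1,\ldots,\floor{(\la_p-\la_{p+1})/r}\}.
\]
On the other hand \eqref{Eq_omegamu} gives $\nu_p-\nu_{p+1}=\floor{(\la_p-\la_{p+1})/r}$, so the lemma applied to $\nu$ yields $\HHbeen(\nu)=\bigcup_{p\geq 1}\{1,\ldots,\floor{(\la_p-\la_{p+1})/r}\}$; comparing the two displays gives $\HHb{r}(\la)=r\HHbeen(\nu)$. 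I expect the only genuine obstacle to be stating and proving the bottom-square lemma cleanly---in particular the indexing of bottom squares by rows and the arm-length computation---since everything else is bookkeeping around the additive identity $\la_i=\mu_i+r\nu_i$.
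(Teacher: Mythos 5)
Your proposal is correct and follows essentially the same route as the paper: both rest on the additive identity $\la_i=\mu_i+r\nu_i$, with bijectivity via the explicit inverse $(\mu,\nu)\mapsto\mu+r\nu$, \eqref{Eq_size-lamurnu} by summation, and \eqref{Eq_samecore} from the congruence criterion recalled in Section~\ref{Sec_Littlewood}. The only difference is one of detail: where the paper disposes of \eqref{Eq_HHtorHH} with the one-line multiset chain $\HHb{r}(\la)=\HHb{r}(\la-\mu)=\HHb{r}(r\nu)=r\HHbeen(\nu)$, your row-by-row lemma $\HHbeen(\eta)=\bigcup_{p\geq 1}\{1,\dots,\eta_p-\eta_{p+1}\}$ supplies exactly the justification that chain leaves implicit.
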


Equations \eqref{Eq_size-lamurnu} and \eqref{Eq_HHtorHH} are the 
analogues of \eqref{Eq_Lit2} and \eqref{Eq_Lit3} in Littlewood's
decomposition. Equation \eqref{Eq_samecore} also holds
in the Littlewood case as part of the much stronger 
$\core{\la}=\core{\mu}=\mu$.

\begin{proof}
To see that $\psi_r$ is a bijection we use
\[
r\{n/r\}=n-r\floor{n/r}
\]
to rewrite \eqref{Eq_numu} as
\begin{align*}
\mu_i &=\sum_{j\geq i}\Big(\la_j-\la_{j+1}-r\floor{(\la_j-\la_{j+1})/r}\Big) \\
&=\la_i-r\sum_{j\geq i}\floor{(\la_j-\la_{j+1})/r} \\
&=\la_i-r\nu_i,
\end{align*}
where the final equality follows from \eqref{Eq_omegamu}.
This shows that the tuple $(\mu,\nu)$ uniquely fixes $\la$,
so that $\psi_r$ is injective. Surjectivity is also clear 
since, for arbitrary $\mu\in\KK_r$ and $\nu\in\PP$, the image of 
$\mu+r\nu:=(\mu_1+r\nu_1,\mu_2+r\nu_2,\dots)$ is $(\mu,\nu)$.
We may thus conclude that $\psi_r$ is a bijection, with inverse
\begin{align*}
\psi_r^{-1}:
\KK_r\times\PP &\longrightarrow \PP \\
(\mu,\nu) &\longmapsto \la
\end{align*}
given by 
\[
\la_i=\mu_i+r\nu_i \quad\text{for all $i\geq 1$}.
\]
This immediately implies \eqref{Eq_size-lamurnu} and, recalling the
discussion following equation \eqref{Eq_colouredexample}, it also shows
\eqref{Eq_samecore}.

Finally, since $\mu\in\KK_r$,
\[
\HHb{r}(\la)=\HHb{r}(\la-\mu)=\HHb{r}(r\nu)=r\HHb{1}(\nu)
=r\HHbeen(\nu),
\]
completing the proof.
\end{proof} 

The generating function of $r$-kernels with fixed $r$-core 
admits a closed-form expression as follows.

\begin{lemma}\label{Lem_GF-rkernels-omega}
Let $\omega\in\CC_r$. Then
\[
\sum_{\substack{\mu\in\KK_r \\[1pt] \core{\mu}=\omega}} T^{\abs{\mu}}
=\frac{T^{\abs{\omega}}}{(T^r;T^r)_{\infty}^{r-1}}.
\]
\end{lemma}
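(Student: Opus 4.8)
The plan is to derive this from the bijection $\psi_r$ of Proposition~\ref{Prop_properties} together with the known generating function \eqref{Eq_GFPPrmu} for partitions with a prescribed $r$-core. The key observation is that, by \eqref{Eq_samecore}, the map $\psi_r$ restricts to a bijection
\[
\{\la\in\PP:\core{\la}=\omega\}\;\longleftrightarrow\;
\{\mu\in\KK_r:\core{\mu}=\omega\}\times\PP,
\]
since $\core{\la}=\core{\mu}$ guarantees that $\core{\la}=\omega$ precisely when $\core{\mu}=\omega$, while the second component $\nu$ ranges over all of $\PP$ without constraint.

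Next I would transport the weight $T^{\abs{\la}}$ across this bijection using the size identity \eqref{Eq_size-lamurnu}, namely $\abs{\la}=\abs{\mu}+r\abs{\nu}$, which factorises the generating function as
\[
\sum_{\substack{\la\in\PP\\ \core{\la}=\omega}}T^{\abs{\la}}
=\Bigg(\sum_{\substack{\mu\in\KK_r\\ \core{\mu}=\omega}}T^{\abs{\mu}}\Bigg)
\Bigg(\sum_{\nu\in\PP}T^{r\abs{\nu}}\Bigg).
\]
The left-hand side equals $T^{\abs{\omega}}/(T^r;T^r)_{\infty}^r$ by \eqref{Eq_GFPPrmu}, and the last factor is the generating function of all partitions with $T$ replaced by $T^r$, so by \eqref{Eq_GFpart} (with $z=1$) it equals $1/(T^r;T^r)_{\infty}$.

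Finally I would solve the resulting scalar identity for the unknown factor, giving
\[
\sum_{\substack{\mu\in\KK_r\\ \core{\mu}=\omega}}T^{\abs{\mu}}
=\frac{T^{\abs{\omega}}}{(T^r;T^r)_{\infty}^r}\,(T^r;T^r)_{\infty}
=\frac{T^{\abs{\omega}}}{(T^r;T^r)_{\infty}^{r-1}},
\]
as claimed. Since Proposition~\ref{Prop_properties} is already in hand, there is no genuine obstacle here; the only point needing care is the verification that $\psi_r$ restricts to the stated bijection, and this is immediate from \eqref{Eq_samecore}. As a consistency check, summing the result over $\omega\in\CC_r$ and invoking \eqref{Eq_GF-rcores} recovers \eqref{Eq_GF-rkernels} at $z=1$.
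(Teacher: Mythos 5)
Your proof is correct and follows essentially the same route as the paper: the authors likewise apply the bijection $\psi_r$ of Proposition~\ref{Prop_properties} (using \eqref{Eq_samecore} and \eqref{Eq_size-lamurnu}) to write the generating function of partitions with $r$-core $\omega$ as $\frac{1}{(T^r;T^r)_{\infty}}\sum_{\mu\in\KK_r,\,\core{\mu}=\omega}T^{\abs{\mu}}$, and then solve using \eqref{Eq_GFPPrmu}. Your explicit verification that $\psi_r$ restricts to a bijection on the fibre over $\omega$, and the closing consistency check against \eqref{Eq_GF-rkernels}, merely spell out details the paper leaves implicit.
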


Summing the left-hand side over $\omega\in\CC_r$ yields the generating 
function for all $r$-kernels. 
By \eqref{Eq_GF-rcores}, carrying out this  same sum on the right
yields $(T^r;T^r)_{\infty}/(T;T)_{\infty}$, so that we recover
\eqref{Eq_GF-rkernels}.

\begin{proof}
Let $\omega\in\CC_r$.
From Proposition~\ref{Prop_properties} it follows that
\[
\sum_{\substack{\la\in\PP \\[1pt] \core{\la}=\omega}}T^{\abs{\la}}=
\frac{1}{(T^r;T^r)_{\infty}}
\sum_{\substack{\mu\in\KK_r \\[1pt] \core{\mu}=\omega}} T^{\abs{\mu}}.
\]
The left-hand side is the generating function of partitions with
fixed $r$-core, which can be expressed in closed form by \eqref{Eq_GFPPrmu}.
Multiplying both sides by $(T^r;T^r)_{\infty}$ the claim follows.
\end{proof}

We now have all the ingredients needed to prove 
Theorems~\ref{Thm_multiplication-new} and \ref{Thm_multiplication-new-2}.

\begin{proof}\label{page_pf}
Let $\omega\in\CC_r$.
By Proposition~\ref{Prop_properties},
\[
\sum_{\substack{\la\in\PP \\[1pt] \core{\la}=\omega}}
T^{\abs{\la}} \prod_{h\in\HHb{r}(\la)} \rho(h)
=\Bigg(\,\sum_{\substack{\mu\in\KK_r \\[1pt] \core{\mu}=\omega}} 
T^{\abs{\mu}}\Bigg)
\Bigg(\sum_{\nu\in\PP} T^{r\abs{\nu}} 
\prod_{h\in\HHbeen(\nu)} \rho(rh)\Bigg).
\]
The first sum on the right can be carried out by
Lemma~\ref{Lem_GF-rkernels-omega}, whereas the second sum is exactly
$f_r(T^r)$ by \eqref{Eq_input} with $T\mapsto T^r$.
Hence
\[
\sum_{\substack{\la\in\PP \\[1pt] \core{\la}=\omega}}
T^{\abs{\la}} \prod_{h\in\HHb{r}(\la)} \rho(h) 
=\frac{T^{\abs{\omega}}}{(T^r;T^r)_{\infty}^{r-1}}
\,f_r(T^r).
\]
Dividing both sides by $T^{\abs{\omega}}$ and replacing $T$ by $T^{1/r}$
yields \eqref{Eq_output}. If instead we replace $T$ by $TS^{1/r}$,
then multiply both sides by $S^{-\abs{\omega}/r}$ and finally sum over 
$\mu\in\CC_r$ using \eqref{Eq_GF-rcores}, we obtain 
\[
\sum_{\omega\in\CC_r}
\sum_{\substack{\la\in\PP \\[1pt] \core{\la}=\omega}}
T^{\abs{\la}} S^{(\abs{\la}-\abs{\omega})/r} 
\prod_{h\in\HHb{r}(\la)} \rho(h)
=\frac{(T^r;T^r)_{\infty}^r}{(T;T)_{\infty}(ST^r;ST^r)_{\infty}^{r-1}}
\,f_r(ST^r).
\]
From \eqref{Eq_Lit2} and \eqref{Eq_Lit3} it follows that
$(\abs{\la}-\abs{\omega})/r=\abs{\HH_r(\la)}$, so that the left-hand side
may be replaced by
\[
\sum_{\omega\in\CC_r}
\sum_{\substack{\la\in\PP \\[1pt] \core{\la}=\omega}}
T^{\abs{\la}} S^{\abs{\HH_r(\la)}} \prod_{h\in\HHb{r}(\la)} \rho(h) \\
=\sum_{\la\in\PP}
T^{\abs{\la}} S^{\abs{\HH_r(\la)}} \prod_{h\in\HHb{r}(\la)} \rho(h) .
\]
This also proves \eqref{Eq_output-mod}.
\end{proof}

\section{Applications}\label{Sec_Applications}

While hook-length formulas abound in the combinatorics literature, 
identities that involve only hook-lengths of bottom squares are rare, 
making it more difficult to apply Theorems~\ref{Thm_multiplication-new}
and \ref{Thm_multiplication-new-2} than the Han--Ji multiplication 
theorem.

As a first example we discuss what is essentially a trivial application 
by taking $\rho(h)=z$, independent of $h$.
Then the left-hand side of \eqref{Eq_input} simplifies to
\[
\sum_{\la\in\PP} T^{\abs{\la}} z^{\abs{\HHbeen(\la)}}
=\sum_{\la\in\PP} T^{\abs{\la}} z^{\la_1}
=\sum_{\la\in\PP} T^{\abs{\la}} z^{l(\la)} = \frac{1}{(zT;T)_{\infty}},
\]
where the second equality follows from the substitution
$\la\mapsto\la'$ and the third equality follows from \eqref{Eq_GFpart}.
Hence
\[
f_r(T)=\frac{1}{(zT;T)_{\infty}},
\]
independent of $r$. Substituting this into 
\eqref{Eq_output} and \eqref{Eq_output-mod} yields
\begin{subequations}\label{Eq_app1}
\begin{equation}\label{Eq_app1a}
\sum_{\substack{\la\in\PP \\[1pt] \core{\la}=\omega}} 
T^{(\abs{\la}-\abs{\omega})/r} z^{\abs{\HHb{r}(\la)}}
=\frac{1}{(zT;T)_{\infty}(T;T)_{\infty}^{r-1}}
\end{equation}
and
\begin{equation}\label{Eq_app1b}
\sum_{\la\in\PP}T^{\abs{\la}} S^{\abs{\HH_r(\la)}} 
z^{\abs{\HHb{r}(\la)}}
=\frac{(T^r;T^r)_{\infty}^r}
{(T;T)_{\infty}(zST^r;ST^r)_{\infty}(ST^r;ST^r)_{\infty}^{r-1}}.
\end{equation}
\end{subequations}
In Section~\ref{Sec_BFN} we discuss how these results relate to a 
combinatorial identity of Buryak, Feigin and Nakajima which arose in
their work on the quasihomogeneous Hilbert scheme of points in the plane.

In order to express \eqref{Eq_app1a} and \eqref{Eq_app1b} in terms of
the modular length function \eqref{Eq_modular-length}, we replace 
$\la\mapsto\la'$ in both formulas.
In \eqref{Eq_app1a} we further make the substitution $\omega\mapsto\omega'$,
noting that $\core{\la'}=\omega'$ is equivalent to $\core{\la}=\omega$, and 
in \eqref{Eq_app1b} we use that $\abs{\HH_r(\la')}=\abs{\HH_r(\la)}$. 
This leads to the following pair of partition identities, generalising
\eqref{Eq_GFpart}.

\begin{proposition}
For $r$ a positive integer and $\omega$ an $r$-core,
\[
\sum_{\substack{\la\in\PP \\[1pt] \core{\la}=\omega}} 
T^{(\abs{\la}-\abs{\omega})/r} z^{\ell_r(\la)}
=\frac{1}{(zT;T)_{\infty}(T;T)_{\infty}^{r-1}}
\]
and
\[
\sum_{\la\in\PP}T^{\abs{\la}} S^{\abs{\HH_r(\la)}} z^{\ell_r(\la)} =
\frac{(T^r;T^r)_{\infty}^r}
{(T;T)_{\infty}(zST^r;ST^r)_{\infty}(ST^r;ST^r)_{\infty}^{r-1}}.
\]
\end{proposition}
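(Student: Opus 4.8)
The plan is to obtain both identities directly from \eqref{Eq_app1a} and \eqref{Eq_app1b} by re-indexing the relevant sum through the conjugation involution $\la\mapsto\la'$ on $\PP$. Before doing so I would record the three invariances that make conjugation the right move. First, conjugation preserves size, $\abs{\la'}=\abs{\la}$. Second, it preserves the entire multiset of hook-lengths, so in particular the number of hooks divisible by $r$ is fixed: $\abs{\HH_r(\la')}=\abs{\HH_r(\la)}$. Third, it interchanges the bottom-square and trivial-arm statistics: by the definition $\HHr{r}(\la):=\HHb{r}(\la')$ and $\ell_r(\la):=\abs{\HHr{r}(\la)}$ one has the symmetric substitution rule $\abs{\HHb{r}(\la')}=\ell_r(\la)$ for every $\la$.

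The modular (unrestricted) identity is the cleaner of the two, so I would settle it first. Starting from \eqref{Eq_app1b}, replace the summation variable $\la$ by $\la'$. Since $\la\mapsto\la'$ is a bijection of $\PP$, the total sum is unchanged, while term by term $T^{\abs{\la}}$ is fixed, $S^{\abs{\HH_r(\la)}}$ is fixed by the second invariance, and $z^{\abs{\HHb{r}(\la)}}$ becomes $z^{\abs{\HHb{r}(\la')}}=z^{\ell_r(\la)}$ by the substitution rule. The left-hand side of \eqref{Eq_app1b} thus turns into $\sum_{\la\in\PP}T^{\abs{\la}}S^{\abs{\HH_r(\la)}}z^{\ell_r(\la)}$, the right-hand side being untouched, which is exactly the second asserted identity.

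For the core-restricted identity I would evaluate its left-hand side $\sum_{\core{\la}=\omega}T^{(\abs{\la}-\abs{\omega})/r}z^{\ell_r(\la)}$ by the substitution $\la=\eta'$, now also tracking the constraint. The one extra input is that conjugation commutes with $r$-core extraction, $\core{\eta'}=(\core{\eta})'$, so that $\core{\la}=\omega$ becomes $\core{\eta}=\omega'$; since the conjugate of an $r$-core is again an $r$-core, $\omega'\in\CC_r$. Using $\abs{\eta'}=\abs{\eta}$ and $\abs{\omega'}=\abs{\omega}$ in the exponent of $T$, together with $\ell_r(\eta')=\abs{\HHb{r}(\eta)}$, the left-hand side becomes $\sum_{\core{\eta}=\omega'}T^{(\abs{\eta}-\abs{\omega'})/r}z^{\abs{\HHb{r}(\eta)}}$, which is precisely the left-hand side of \eqref{Eq_app1a} with $\omega$ replaced by $\omega'$. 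As the right-hand side of \eqref{Eq_app1a} does not depend on the chosen $r$-core, this equals $1/\big((zT;T)_{\infty}(T;T)_{\infty}^{r-1}\big)$, establishing the first identity.

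There is no genuinely hard step here: the entire argument reduces to the bookkeeping of a single conjugation re-indexing, and the right-hand sides of \eqref{Eq_app1a} and \eqref{Eq_app1b} already match those of the two target formulas verbatim. The only point deserving care is the core-restricted case, where one must correctly rewrite the summation constraint through $\core{\eta'}=(\core{\eta})'$ and then exploit the $\omega$-independence of the right-hand side of \eqref{Eq_app1a}; both are standard facts about conjugation and $r$-cores, the first being implicit in the discussion following \eqref{Eq_colouredexample}.
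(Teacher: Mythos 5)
Your proposal is correct and coincides with the paper's own (very brief) argument: the authors likewise obtain both identities from \eqref{Eq_app1a} and \eqref{Eq_app1b} by the conjugation $\la\mapsto\la'$, using $\abs{\HHb{r}(\la')}=\ell_r(\la)$, the substitution $\omega\mapsto\omega'$ via $\core{\la'}=\omega'\Leftrightarrow\core{\la}=\omega$ in the core-restricted case, and $\abs{\HH_r(\la')}=\abs{\HH_r(\la)}$ in the unrestricted case. Your write-up just makes explicit the bookkeeping the paper leaves implicit.
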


\medskip

To motivate our second application, we recall that the exponential
generating function for the number of involutions $e_2(n)$ in the 
symmetric group $\mathfrak{S}_n$ is given by 
\cite[Equation (5.32)]{Stanley99}
\begin{equation}\label{Eq_involutions-a}
\sum_{n\geq 0} e_2(n) \frac{T^n}{n!}=\exp\bigg(T+\frac{T^2}{2}\bigg).
\end{equation}
As a direct consequence of the Robinson--Schensted correspondence, 
$e_2(n)$ is equal to the number of 
standard Young tableaux of size $n$. 
By \eqref{Eq_FRT} it thus follows that \eqref{Eq_involutions-a} 
can be written as
\begin{equation}\label{Eq_involutions-b}
\sum_{\la\in\PP} T^{\abs{\la}} 
\prod_{h\in\HH(\la)} \frac{1}{h}=\exp\bigg(T+\frac{T^2}{2}\bigg),
\end{equation}
which is to be compared with \eqref{Eq_hook}.

Somewhat surprisingly, \eqref{Eq_involutions-b} has an analogue 
for $\HHbeen(\la)$.

\begin{lemma}
We have
\begin{equation}\label{Eq_new}
\sum_{\la\in\PP} T^{\abs{\la}} 
\prod_{h\in\HHbeen(\la)} \frac{z}{h}=\exp\bigg(\frac{zT}{1-T}\bigg).
\end{equation}
\end{lemma}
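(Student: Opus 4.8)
The plan is to reduce the identity to a clean Euler-product factorisation over the distinct part-sizes of $\la$, after first passing to the conjugate partition. Recall that $\HHreen(\la)=\HHbeen(\la')$ is the multiset of hook-lengths of the squares of $\la$ with trivial arm, that is, the rightmost square $(i,\la_i)$ in each row. Since $\la\mapsto\la'$ is an involution on $\PP$ preserving $\abs{\la}$, substituting $\la\mapsto\la'$ on the left-hand side of \eqref{Eq_new} replaces $\prod_{h\in\HHbeen(\la)}$ by $\prod_{h\in\HHreen(\la)}$. Hence it suffices to evaluate
\[
\sum_{\la\in\PP}T^{\abs{\la}}\prod_{h\in\HHreen(\la)}\frac{z}{h}.
\]

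The crux is a combinatorial description of $\HHreen(\la)$ in terms of the part-multiplicities $m_i(\la)$. The rightmost square in row $i$ is $(i,\la_i)$, with hook-length $\la'_{\la_i}-i+1$. Grouping the rows into maximal runs of equal parts, I would show that a run of $m$ rows all equal to some value $v$ (occupying rows $\la'_v-m+1,\dots,\la'_v$) contributes exactly the hook-lengths $1,2,\dots,m$ to $\HHreen(\la)$; this refines the evaluation $\ell_r(\la)=\sum_{i\ge 1}\floor{m_i(\la)/r}$ already recorded in \eqref{Eq_modular-length}. Consequently
\[
\prod_{h\in\HHreen(\la)}\frac{z}{h}=\prod_{i\ge 1}\frac{z^{m_i(\la)}}{m_i(\la)!}.
\]

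With this identity in hand the generating function factorises over the part-sizes. Writing a partition as a choice of multiplicity $m_i\ge 0$ for each $i\ge 1$ (only finitely many nonzero) and using $\abs{\la}=\sum_{i\ge 1}i\,m_i$, I would obtain
\[
\sum_{\la\in\PP}T^{\abs{\la}}\prod_{h\in\HHreen(\la)}\frac{z}{h}
=\prod_{i\ge 1}\Bigg(\sum_{m\ge 0}\frac{(zT^i)^m}{m!}\Bigg)
=\prod_{i\ge 1}\exp(zT^i).
\]
Summing the geometric series $\sum_{i\ge 1}T^i=T/(1-T)$ in the exponent then yields $\exp\!\big(zT/(1-T)\big)$, which is \eqref{Eq_new}.

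The main obstacle is the combinatorial step: verifying that the trivial-arm hook-lengths within a block of $m$ equal parts are precisely $\{1,\dots,m\}$, rather than some other multiset with the same cardinality. Everything after that is a routine product manipulation, entirely parallel to the derivation of \eqref{Eq_GFpart} from part-multiplicities. As a sanity check I would confirm the block computation on the worked example $\la=(3,2,2,1,1,1,1)$, whose trivial-arm hook-lengths split by part-size as $\{1\}$, $\{2,1\}$ and $\{4,3,2,1\}$, i.e.\ as $\{1\}$, $\{1,2\}$ and $\{1,2,3,4\}$ across the three distinct parts.
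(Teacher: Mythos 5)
Your proposal is correct and is essentially the paper's own proof: conjugate to pass from $\HHbeen$ to $\HHreen$, use that $\prod_{h\in\HHreen(\la)}h=\prod_{i\geq 1}m_i(\la)!$, and factorise the sum over part-multiplicities into $\prod_{i\geq 1}\exp(zT^i)=\exp\big(zT/(1-T)\big)$. The combinatorial step you flag as the main obstacle is routine and your claim is right: the rows with $\la_i=v$ are $i=\la'_v-m_v+1,\dots,\la'_v$, and the trivial-arm square $(i,v)$ has hook-length $\la'_v-i+1$, giving exactly $\{1,\dots,m_v\}$.
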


\begin{proof}
Let $f(z,T)$ denote the left-hand side of \eqref{Eq_new}.
Replacing $\la$ by $\la'$, we obtain
\[
f(z,T)=\sum_{\la\in\PP} T^{\abs{\la}} \prod_{h\in\HHbeen(\la')} \frac{z}{h}
=\sum_{\la\in\PP} z^{\ell(\la)} T^{\abs{\la}}
\prod_{h\in\HHreen(\la)} \frac{1}{h}.
\]
Since $\abs{\la}=\sum_{i\geq 1} i m_i(\la)$, 
$\ell(\la)=\sum_{i\geq 1} m_i(\la)$ and
$\prod_{h\in\HHreen(\la)}h=\prod_{i\geq 1}m_i(\la)!$,
it follows that
\begin{align*}
f(z,T)=
\sum_{\la\in\PP} \prod_{i\geq 1} 
\frac{z^{m_i(\la)} T^{i m_i(\la)}}{m_i(\la)!}
&=\prod_{i\geq 1} \sum_{m_i\geq 0} \frac{(zT^i)^{m_i}}{m_i!} \\
&=\prod_{i\geq 1} \exp(zT^i)
=\exp\bigg(\frac{zT}{1-T}\bigg). \qedhere
\end{align*}
\end{proof}

After the substitution $z\mapsto z/r$ the identity \eqref{Eq_new} 
takes the form \eqref{Eq_input} with
\[
\rho(h)=\frac{z}{h}\quad\text{and}\quad f_r(T)=
\exp\bigg(\frac{z T}{r(1-T)}\bigg).
\]
By \eqref{Eq_output} and \eqref{Eq_output-mod}
we thus obtain the following modular analogues of
\eqref{Eq_new}.

\begin{proposition}
For $r$ a positive integer and $\omega$ an $r$-core,
\[
\sum_{\substack{\la\in\PP \\[1pt] \core{\la}=\omega}} 
T^{(\abs{\la}-\abs{\omega})/r} z^{\ell_r(\la)}
\prod_{h\in\HHr{r}(\la)} \frac{1}{h} = 
\frac{1}{(T;T)_{\infty}^{r-1}}\,
\exp\bigg(\frac{zT}{r(1-T)}\bigg)
\]
and
\[
\sum_{\la\in\PP}T^{\abs{\la}} S^{\abs{\HH_r(\la)}} z^{\ell_r(\la)}
\prod_{h\in\HHr{r}(\la)} \frac{1}{h}
=\frac{(T^r;T^r)_{\infty}^r}{(T;T)_{\infty}{(ST^r;ST^r)_{\infty}^{r-1}}}\,
\exp\bigg(\frac{zST^r}{r(1-ST^r)}\bigg).
\]
\end{proposition}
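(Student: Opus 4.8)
The plan is to specialise the two multiplication theorems of the previous section---Theorem~\ref{Thm_multiplication-new} in the form \eqref{Eq_output} and Theorem~\ref{Thm_multiplication-new-2} in the form \eqref{Eq_output-mod}---to the weight $\rho(h)=z/h$, and then to pass from bottom-square to right-square statistics by conjugation, exactly as in the passage from \eqref{Eq_app1a}--\eqref{Eq_app1b} to the modular-length form.

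First I would pin down the relevant $f_r$. With $\rho(h)=z/h$ the definition \eqref{Eq_input} involves $\rho(rh)=(z/r)/h$, so that
\[
f_r(T)=\sum_{\la\in\PP}T^{\abs{\la}}\prod_{h\in\HHbeen(\la)}\frac{z/r}{h}.
\]
This is precisely the left-hand side of the Lemma \eqref{Eq_new} after the replacement $z\mapsto z/r$, whence
\[
f_r(T)=\exp\!\bigg(\frac{zT}{r(1-T)}\bigg).
\]
The only point needing attention here is the rescaling: the factor $r$ built into $\rho(rh)$ in \eqref{Eq_input} is exactly what turns the $z$ of the Lemma into $z/r$, and so produces the $1/r$ in the exponent.

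Substituting this $f_r$ into \eqref{Eq_output} and \eqref{Eq_output-mod}, and writing $\prod_{h\in\HHb{r}(\la)}\rho(h)=z^{\abs{\HHb{r}(\la)}}\prod_{h\in\HHb{r}(\la)}1/h$, I obtain the two identities of the Proposition, but phrased in terms of the bottom-square multiset $\HHb{r}(\la)$ rather than $\HHr{r}(\la)$. To finish I would conjugate, replacing $\la\mapsto\la'$ and using $\HHb{r}(\la')=\HHr{r}(\la)$, so that $\abs{\HHb{r}(\la')}=\ell_r(\la)$ by \eqref{Eq_modular-length}, together with $\abs{\la'}=\abs{\la}$ and $\abs{\HH_r(\la')}=\abs{\HH_r(\la)}$. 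In the fixed-core identity coming from \eqref{Eq_output} I would additionally put $\omega\mapsto\omega'$, invoking that $\core{\la'}=\omega'$ is equivalent to $\core{\la}=\omega$ and that $\abs{\omega'}=\abs{\omega}$, which restores the constraint to $\core{\la}=\omega$; in the full sum coming from \eqref{Eq_output-mod} there is no constraint to adjust. This delivers the two displayed formulas verbatim.

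There is no genuine obstacle here: once the closed form for $f_r$ is in hand the argument is pure bookkeeping, and every ingredient---the two multiplication theorems, the Lemma \eqref{Eq_new}, and the conjugation-invariance facts $\core{\la'}=(\core{\la})'$ and $\abs{\HH_r(\la')}=\abs{\HH_r(\la)}$---has already been established. The single subtlety worth flagging explicitly is keeping the $z\mapsto z/r$ rescaling consistent throughout the conjugation steps, since it is precisely this rescaling that accounts for the $r$ appearing in the denominator of the exponential on the right-hand sides.
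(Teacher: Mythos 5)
Your proposal is correct and follows essentially the same route as the paper: the paper likewise sets $\rho(h)=z/h$, notes that after $z\mapsto z/r$ the lemma \eqref{Eq_new} gives $f_r(T)=\exp\big(zT/(r(1-T))\big)$ as the input \eqref{Eq_input}, and then reads off the two identities from \eqref{Eq_output} and \eqref{Eq_output-mod}, with the passage from $\HHb{r}$ to $\HHr{r}$ and $\ell_r$ by conjugation $\la\mapsto\la'$ (together with $\omega\mapsto\omega'$ in the fixed-core identity) handled exactly as you describe, the paper having spelled out that conjugation step explicitly for its preceding proposition and leaving it implicit here.
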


It is not difficult to see that by the $q$-binomial theorem 
\cite[Equation (II.3)]{GR04}
\[
\sum_{m\geq 0} \frac{(a;q)_m}{(q;q)_m}\, z^m = 
\frac{(az;q)_{\infty}}{(z;q)_{\infty}}
\]
with $(a,z,m)\mapsto (uq,zT^i,m_i)$ the identity 
\eqref{Eq_new} admits the $q$-analogue\footnote{To recover \eqref{Eq_new},
set $u=0$, replace $z\mapsto z(1-q)$, and then let $q$ tend to $1$.}
\begin{equation}\label{Eq_Amq}
\sum_{\la\in\PP} T^{\abs{\la}} 
\prod_{h\in\HHbeen(\la)} \frac{z(1-uq^h)}{1-q^h}
=\frac{(uzqT;q,T)_{\infty}}{(zT;q,T)_{\infty}}.
\end{equation}
We note that for $z=1$ this is the $t=0$ case of the $q,t$-Nekrasov--Okounkov
formula \eqref{Eq_qtNO}. Moreover, by the substitution
$(z,u)\mapsto (t,q^{-\zeta})$ followed by the limit $q\to 1$,
it simplifies to
\[
\sum_{\la\in\PP} T^{\abs{\la}} t^{\abs{\HHbeen(\la)}}
\prod_{h\in\HHbeen(\la)}
\Big(1-\frac{z}{h}\Big)=(tT;T)_{\infty}^{z-1}.
\]
For $t=1$ this was conjectured in \cite[Conjecture 2.1]{Amdeberhan12} 
and proved in \cite{HN18}.
The identity \eqref{Eq_Amq} implies the most general pair $(\rho,f_r)$ 
that we have been able to find for which $f_r$ admits a simple
closed form:
\[
\rho(h)=\frac{z(1-uq^h)}{(1-q^h)},\qquad
f_r(T)=\frac{(uzq^rT;q^r,T)_{\infty}}{(zT;q^r,T)_{\infty}}.
\]
This leads to our final application, unifying the previous two
propositions.

\begin{proposition}
For $r$ a positive integer and $\omega$ an $r$-core,
\begin{equation}\label{Eq_unify}
\sum_{\substack{\la\in\PP \\[1pt] \core{\la}=\omega}} 
T^{(\abs{\la}-\abs{\omega})/r} z^{\ell_r(\la)}
\prod_{h\in\HHr{r}(\la)} \frac{1-uq^h}{1-q^h}=
\frac{(uzq^rT;q^r,T)_{\infty}}{(T;T)_{\infty}^{r-1}(zT;q^r,T)_{\infty}}
\end{equation}
and
\[
\sum_{\la\in\PP}T^{\abs{\la}} S^{\abs{\HH_r(\la)}} z^{\ell_r(\la)}
\prod_{h\in\HHr{r}(\la)} \frac{1-uq^h}{1-q^h}
=\frac{(T^r;T^r)_{\infty}^r(uzq^rST^r;q^r,ST^r)_{\infty}}
{(T;T)_{\infty}(ST^r;ST^r)_{\infty}^{r-1}(zST^r;q^r,ST^r)_{\infty}}.
\]
\end{proposition}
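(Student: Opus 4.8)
The plan is to read off both identities as specialisations of the new multiplication theorems \eqref{Eq_output} and \eqref{Eq_output-mod} for the weight $\rho(h)=z(1-uq^h)/(1-q^h)$, and then to pass from bottom squares to right squares by conjugation, exactly as was done for the two preceding propositions. First I would identify the seed series. By \eqref{Eq_input} we have $f_r(T)=\sum_{\la\in\PP}T^{\abs{\la}}\prod_{h\in\HHbeen(\la)}\rho(rh)$, and since $\rho(rh)=z(1-uq^{rh})/(1-q^{rh})$, the right-hand side is precisely the left-hand side of \eqref{Eq_Amq} after the replacement $q\mapsto q^r$. Hence $f_r(T)=(uzq^rT;q^r,T)_{\infty}/(zT;q^r,T)_{\infty}$, the pair $(\rho,f_r)$ recorded immediately above the proposition.

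Next I would substitute this $\rho$ and $f_r$ directly into \eqref{Eq_output} and \eqref{Eq_output-mod}. In each case the product factorises as $\prod_{h\in\HHb{r}(\la)}\rho(h)=z^{\abs{\HHb{r}(\la)}}\prod_{h\in\HHb{r}(\la)}(1-uq^h)/(1-q^h)$, so that \eqref{Eq_output} yields the desired fixed-core identity written in terms of $\HHb{r}(\la)$ and $\abs{\HHb{r}(\la)}$, while \eqref{Eq_output-mod} yields the corresponding identity summed over all of $\PP$.

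The key manipulation is the substitution $\la\mapsto\la'$, which converts bottom squares to right squares. Since $\HHr{r}(\la)=\HHb{r}(\la')$ by definition, we have $\HHb{r}(\la')=\HHr{r}(\la)$ and $\abs{\HHb{r}(\la')}=\ell_r(\la)$ in the notation of \eqref{Eq_modular-length}. For the sum over all of $\PP$ this is immediate, because conjugation preserves $\abs{\la}$ and fixes the hook multiset, so $\abs{\HH_r(\la')}=\abs{\HH_r(\la)}$; thus $\la\mapsto\la'$ turns the specialised \eqref{Eq_output-mod} into the second claimed identity. For the fixed-core identity \eqref{Eq_unify} one must additionally track the core: reparametrising $\{\la:\core{\la}=\omega\}$ by $\la=\eta'$ replaces the constraint by $\core{\eta}=\omega'$, using $\core{\la'}=(\core{\la})'$. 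Because the right-hand side of the specialised \eqref{Eq_output} is independent of the $r$-core and $\abs{\omega'}=\abs{\omega}$, the relabelled sum again equals the same right-hand side, which is \eqref{Eq_unify}.

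I do not expect a genuine obstacle here: the proposition is a direct corollary of the two multiplication theorems once $f_r$ is recognised as the $q\mapsto q^r$ instance of \eqref{Eq_Amq}. The only point deserving care is the bookkeeping in the conjugation step, namely verifying that the fixed-core constraint transforms as $\omega\mapsto\omega'$ and observing that the $\omega$-independence of the right-hand side renders this relabelling harmless.
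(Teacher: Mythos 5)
Your proposal is correct and matches the paper's own (largely implicit) derivation: the paper likewise obtains this proposition by recognising $f_r(T)=(uzq^rT;q^r,T)_{\infty}/(zT;q^r,T)_{\infty}$ as the $q\mapsto q^r$ instance of \eqref{Eq_Amq}, substituting the pair $(\rho,f_r)$ with $\rho(h)=z(1-uq^h)/(1-q^h)$ into \eqref{Eq_output} and \eqref{Eq_output-mod}, and then conjugating $\la\mapsto\la'$ (with $\omega\mapsto\omega'$ in the fixed-core case, harmless by $\omega$-independence of the right-hand side) exactly as in the two preceding applications. Your bookkeeping of $\HHb{r}(\la')=\HHr{r}(\la)$, $\ell_r(\la)=\abs{\HHr{r}(\la)}$, $\abs{\HH_r(\la')}=\abs{\HH_r(\la)}$ and $\core{\la'}=(\core{\la})'$ is precisely what the paper uses.
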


\section{Conjectures and open problems}\label{Sec_Conjectures}

\subsection{A modular analogue of the $q,t$-Nekrasov--Okounkov
formula}\label{Sec_qt-modular}

By setting $z=1$ in \eqref{Eq_unify} and carrying out some elementary 
manipulations and rewritings we obtain the following pair of equivalent 
identities
\[
\sum_{\substack{\la\in\PP \\[1pt] \core{\la}=\omega}} 
T^{(\abs{\la}-\abs{\omega})/r} 
\prod_{\substack{s\in\la \\[1pt] h(s)\equiv 0 \Mod{r} \\[1pt] l(s)=0 }} 
\frac{1-uq^{a(s)+1}}{1-q^{a(s)+1}}=
\frac{(uq^rT;q^r,T)_{\infty}}{(T;T)_{\infty}^r(q^rT;q^r,T)_{\infty}}
\]
and
\[
\sum_{\substack{\la\in\PP \\[1pt] \core{\la}=\omega}} 
T^{(\abs{\la}-\abs{\omega})/r} 
\prod_{\substack{s\in\la \\[1pt] h(s)\equiv 0 \Mod{r} \\[1pt] a(s)=0}} 
\frac{1-u^{-1}t^{l(s)+1}}{1-t^{l(s)+1}}=
\frac{(u^{-1}t^rT;t^r,T)_{\infty}}{(T;T)_{\infty}^r(t^rT;t^r,T)_{\infty}}.
\]
This should be compared with the modular analogue of the 
$q$-Nekrasov--Okounkov formula
\[
\sum_{\substack{\la\in\PP \\[1pt] \core{\la}=\omega}} 
T^{(\abs{\la}-\abs{\omega})/r}
\prod_{h\in\HH_r(\la)} \frac{(1-uq^h)(1-u^{-1}q^h)}{(1-q^h)^2} \\
=\bigg(\frac{(uq^rT,u^{-1}q^rT;q^r,q^r,T)_{\infty}}
{(T,q^rT;q^r,q^r,T)_{\infty}}\bigg)^r,
\]
which, in a slightly different form, is due to 
Dehaye and Han \cite[Theorem 2]{DH11} and follows
from the $r=1$ case 
(see \cite[p.~749]{INRS12} and \cite[Theorem 5]{DH11}) 
combined with Theorem~\ref{Thm_HanJi-mod}.

The above three identities suggest a modular analogue of the 
full $q,t$-Nekrasov--Okounkov formula \eqref{Eq_qtNO}.

\begin{conjecture}
For $r$ a positive integer and $\omega$ an $r$-core,
\begin{align*}
\sum_{\substack{\la\in\PP \\[1pt] \core{\la}=\omega}} 
T^{(\abs{\la}-\abs{\omega})/r} &
\prod_{\substack{s\in\la \\[1pt] h(s)\equiv 0 \Mod{r}}}
\frac{(1-uq^{a(s)+1}t^{l(s)})(1-u^{-1}q^{a(s)}t^{l(s)+1})}
{(1-q^{a(s)+1}t^{l(s)})(1-q^{a(s)}t^{l(s)+1})} \\[1mm]
&=\frac{1}{(T;T)_{\infty}^r} 
\prod_{\substack{i,j\geq 1 \\ i+j\equiv 1 \Mod{r}}}
\frac{(uq^it^{j-1}T,u^{-1}q^{i-1}t^jT;T)_{\infty}}
{(q^it^{j-1}T,q^{i-1}t^jT;T)_{\infty}} \notag \\[1mm]
&=\frac{1}{(T;T)_{\infty}^r} 
\prod_{i=1}^r 
\frac{(uq^it^{r-i}T,u^{-1}q^{r-i}t^{i}T;q^r,t^r,T)_{\infty}}
{(q^it^{r-i}T,q^{r-i}t^iT;q^r,t^r,T)_{\infty}}.
\end{align*}
\end{conjecture}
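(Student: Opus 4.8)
The plan is to reduce the identity to its $r=1$ case, which is exactly the $q,t$-Nekrasov--Okounkov formula \eqref{Eq_qtNO}, by means of Littlewood's decomposition $\phi_r$. By \eqref{Eq_Lit3} the boxes $s\in\la$ with $h(s)\equiv 0\Mod{r}$ are in bijection with the boxes of the $r$-quotient $\boldsymbol{\nu}=(\nu^{(0)},\dots,\nu^{(r-1)})$, under which hook-lengths scale by $r$: if $s$ corresponds to $\tilde s\in\nu^{(k)}$ then $h_{\la}(s)=r\,h_{\nu^{(k)}}(\tilde s)$. Since the product in the conjecture runs over precisely these boxes, and $\abs{\la}-\abs{\omega}=r\abs{\boldsymbol{\nu}}$ by \eqref{Eq_Lit2}, the sum over $\{\la:\core{\la}=\omega\}$ becomes a free sum over $\boldsymbol{\nu}\in\PP^r$ with $T^{(\abs{\la}-\abs{\omega})/r}=T^{\abs{\boldsymbol{\nu}}}$. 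Were the $q,t$-weight of $s$ expressible through $a_{\nu^{(k)}}(\tilde s)$ and $l_{\nu^{(k)}}(\tilde s)$ alone, the sum would factor into the shifted $(q^r,t^r,T)$-products on the right, and the conjecture would follow from \eqref{Eq_qtNO} read in base $(q^r,t^r)$.

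What is needed, then, is a refinement of Littlewood's decomposition tracking arms and legs rather than only hooks. Working in the abacus model of Section~\ref{Sec_Littlewood}, where $s$ is recorded by a $1$ at position $x$ and a $0$ at position $y<x$ with $h_{\la}(s)=x-y$, the condition $r\mid h_{\la}(s)$ forces $x\equiv y\Mod{r}$, so both lie on a single runner $k$. I would aim to prove that there are integers $i=i(s),\,j=j(s)\geq 1$ with $i+j\equiv 1\Mod{r}$ such that
\[
a_{\la}(s)=r\,a_{\nu^{(k)}}(\tilde s)+(i-1),\qquad
l_{\la}(s)=r\,l_{\nu^{(k)}}(\tilde s)+(j-1).
\]
The congruence is forced by $a_{\la}(s)+l_{\la}(s)=h_{\la}(s)-1\equiv r-1\Mod{r}$, and such a relation matches the two products on the right via $q^{a(s)+1}t^{l(s)}=(q^r)^{a_{\nu^{(k)}}(\tilde s)}(t^r)^{l_{\nu^{(k)}}(\tilde s)}q^it^{j-1}$ together with $q^{a(s)}t^{l(s)+1}=(q^r)^{a_{\nu^{(k)}}(\tilde s)}(t^r)^{l_{\nu^{(k)}}(\tilde s)}q^{i-1}t^j$; the passage between the $\{i+j\equiv 1\}$ product and the $i=1,\dots,r$ product is then the routine reindexing $i\mapsto ra+i_0$, $j\mapsto rb+j_0$.

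The main obstacle is that the shift $(i(s),j(s))$ is genuinely box-dependent and is \emph{not} constant on a quotient component, so it cannot be extracted by a naive runner-by-runner count: the positions inside the hook lying off runner $k$ contribute variable numbers of $0$s and $1$s. For instance, with $r=2$ and $\la=(3,1)$ the $2$-quotient is $\bigl((2),\emptyset\bigr)$, yet the two boxes of $\la$ with $h(s)\equiv 0\Mod{2}$ both map into $\nu^{(0)}$ and carry the two \emph{distinct} shifts $(i,j)=(1,2)$ and $(2,1)$. The examples suggest that $i(s)$ is governed by the content of $\tilde s$ modulo $r$ together with a charge attached to the runner $k$ and the core $\omega$, so that the correct reduction is not a product of $r$ verbatim copies of \eqref{Eq_qtNO} but a content-graded refinement of it. I expect the crux to be proving that the off-runner contributions distribute so that each residue class $i\in\{1,\dots,r\}$ occurs with exactly the multiplicity dictated by the right-hand side; I would first settle $r=2$ explicitly, then seek a uniform argument---most plausibly a content-based rewriting of the $q,t$-weight, or a passage through the principal specialisation of Macdonald polynomials that underlies \eqref{Eq_qtNO}---that renders the factorisation transparent. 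Reinstating the $r$-core generating function via \eqref{Eq_GF-rcores} would finally upgrade the result to the $S$-graded form of Conjecture~\ref{Conj_modqtNO}.
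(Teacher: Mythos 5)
Your proposal is not a proof, and it is important to be clear that the paper does not contain one either: this statement is presented there as an open conjecture (the refinement, at fixed $r$-core $\omega$, of Conjecture~\ref{Conj_modqtNO}), supported only by special cases --- the $t\to 0$ degeneration obtained from \eqref{Eq_unify} at $z=1$, the $t=q$ case which reduces to the Dehaye--Han modular $q$-Nekrasov--Okounkov formula via Theorem~\ref{Thm_HanJi-mod}, and low-order verifications. So the honest assessment is that your attempt stalls exactly where the authors say every multiplication-type argument stalls. Your setup is correct as far as it goes: \eqref{Eq_Lit2} and \eqref{Eq_Lit3} do convert the sum over $\{\la:\core{\la}=\omega\}$ into a sum over $\boldsymbol{\nu}\in\PP^r$ with weight $T^{\abs{\boldsymbol{\nu}}}$, the relation $a_{\la}(s)+l_{\la}(s)=r\,h_{\nu^{(k)}}(\tilde s)-1$ does force shifts with $i+j=r+1$, and your $r=2$, $\la=(3,1)$ computation correctly exhibits that $(i(s),j(s))$ varies within a single quotient component. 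But the step you defer --- ``each residue class $i$ occurs with exactly the multiplicity dictated by the right-hand side'' --- is the entire content of the conjecture, and it cannot be established runner by runner: in the abacus picture $i(s)$ counts off-runner beads interleaved inside the hook, which depends simultaneously on \emph{all} components of the quotient and on $\omega$, so the generating function does not factor over $k$, and no ``content-graded refinement of \eqref{Eq_qtNO}'' of the kind you posit is known.

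There is moreover concrete evidence in the paper that no termwise (bijective) factorisation of your proposed shape can exist. In the elliptic analogue (Conjecture~\ref{Conj_elliptic}), the authors compare $\omega=0$ with $\omega=(1)$ at $n=2$ and find that the individual summands $\varphi_{\la}$ do \emph{not} match up under any pairing of partitions: three terms on each side differ, and their difference vanishes only through the Weierstrass-type addition formula \eqref{Eq_addition} applied twice. The same phenomenon survives the degeneration to the $q,t$-case: as the paper states, ``non-trivial rational function identities are behind the $\omega$-independence of \eqref{Eq_rationalsum}.'' A reduction of the conjecture to $r$ interleaved copies of \eqref{Eq_qtNO}, with weights determined box by box from the Littlewood data, would imply precisely the termwise matching that this computation rules out. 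So the missing idea is not a cleverer bookkeeping of the shifts $(i(s),j(s))$; one needs a genuinely new mechanism --- your closing suggestion of passing through principal specialisations of Macdonald polynomials (the route that proves \eqref{Eq_qtNO} itself in \cite{RW18}) is a more plausible direction than the combinatorial factorisation, but as written your argument proves only the cases the paper already establishes.
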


We note that it is not at all clear why
\begin{equation}\label{Eq_rationalsum}
\sum_{\substack{\la\in\PP \\[1pt] \core{\la}=\omega}} 
T^{(\abs{\la}-\abs{\omega})/r}
\prod_{\substack{s\in\la \\[1pt] h(s)\equiv 0 \Mod{r}}}
\frac{(1-uq^{a(s)+1}t^{l(s)})(1-u^{-1}q^{a(s)}t^{l(s)+1})}
{(1-q^{a(s)+1}t^{l(s)})(1-q^{a(s)}t^{l(s)+1})}
\end{equation}
is independent of the choice of $\omega$, so that there are really 
two parts to the conjecture. Assuming this independence, we obtain
a weaker form, stated as Conjecture~\ref{Conj_modqtNO} 
in the introduction, by replacing $T\mapsto ST^r$, multiplying both sides by 
$T^{\abs{\omega}}$ and then summing $\omega$ over $\CC_r$.
It appears that neither conjecture is tractable by a multiplication-type 
theorem. In particular, non-trivial rational function identities
are behind the $\omega$-independence of \eqref{Eq_rationalsum}.
An example of such an identity in the more general elliptic case is
discussed below.

\medskip

The $q,t$-Nekrasov--Okounkov formula admits an elliptic analogue as follows.
Let $\theta(z;p)$ be the modified theta function
\[
\theta(z;p):=\sum_{n\in\mathbb{Z}} z^n q^{\binom{n}{2}}, \quad z\neq 0,
\]
and define the set of integers 
$\{C(m,\ell,n_1,n_2)\}_{m\in\mathbb{N},\,\ell,n_1,n_2\in\mathbb{Z}}$ by
\[
\frac{(upq,u^{-1}pq^{-1},upt^{-1},u^{-1}pt;p)_{\infty}}
{(pq,pq^{-1},pt^{-1},pt;p)_{\infty}} \\
=1+\sum_{m\geq 1} \, \sum_{\ell,n_1,n_2\in\Z} C(m,\ell,n_1,n_2)
p^m u^{\ell} q^{-n_1} t^{n_2}.
\]
Note that $C(m,\ell,n_1,n_2)=C(m,\ell,n_2,n_1)=C(m,-\ell,-n_1,-n_2)$
and (set $u=1$)
\begin{equation}\label{Eq_Cnul}
\sum_{\ell\in\mathbb{Z}} C(m,\ell,n_1,n_2)=0.
\end{equation}
This latter sum is well-defined since
$C(m,\ell,n_1,n_2)=0$ if $\abs{\ell}\geq\lfloor\sqrt{4m+1}\rfloor$.
Then
\begin{align}\label{Eq_eNO}
&\sum_{\la} T^{\abs{\la}} \prod_{s\in\la}
\frac{\theta(uq^{a(s)+1}t^{l(s)};p)\theta(u^{-1}q^{a(s)}t^{l(s)+1};p)}
{\theta(q^{a(s)+1}t^{l(s)};p)\theta(q^{a(s)}t^{l(s)+1};p)} \\
&\quad=
\frac{(uqT,u^{-1}tT;q,t,T)_{\infty}}{(T,qtT;q,t,T)_{\infty}} \notag \\
& \qquad \times
\prod_{m,k\geq 1} \, \prod_{\ell,n_1,n_2\in\Z} 
\bigg(\frac{(p^m T^k u^{\ell+1} q^{1-n_1} t^{n_2},
p^m T^k u^{\ell-1} q^{-n_1} t^{1+n_2};q,t)_{\infty}}
{(p^m T^k u^{\ell} q^{-n_1} t^{n_2},
p^m T^k u^{\ell} q^{1-n_1} t^{1+n_2};q,t)_{\infty}}\bigg)^
{C(km,\ell,n_1,n_2)}, \notag
\end{align}
see \cite{LLZ06,RW18,Waelder08}.
For $p=1$ this simplifies to \eqref{Eq_qtNO} and, by \eqref{Eq_Cnul},
for $u=1$ it simplifies to \eqref{Eq_GFpart} with $z=1$.

\begin{conjecture}\label{Conj_elliptic}
Let $r$ be a positive integer, $\omega$ an $r$-core and
\[
f_{\omega;r}(u;q,t,T;p) :=
\sum_{\substack{\la\in\PP \\[1pt] \core{\la}=\omega}} 
T^{(\abs{\la}-\abs{\omega})/r} 
\prod_{\substack{s\in\la \\[1pt] h(s)\equiv 0 \Mod{r}}}
\frac{\theta(uq^{a(s)+1}t^{l(s)};p)\theta(u^{-1}q^{a(s)}t^{l(s)+1};p)}
{\theta(q^{a(s)+1}t^{l(s)};p)\theta(q^{a(s)}t^{l(s)+1};p)}.
\]
Then $f_{\omega;r}(u;q,t,T;p)$ is independent of $\omega$.
\end{conjecture}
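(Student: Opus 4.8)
The plan is to push the problem onto the $r$-quotient via Littlewood's decomposition \eqref{Eq_Litt-decomp} and to make the dependence on $\omega$ fully explicit. First I would fix $\omega\in\CC_r$ and use the bijection $\phi_r$ together with \eqref{Eq_Lit2} and \eqref{Eq_Lit3} to rewrite $f_{\omega;r}$ as a sum over $\boldsymbol{\nu}=(\nu^{(0)},\dots,\nu^{(r-1)})\in\PP^r$: since $(\abs{\la}-\abs{\omega})/r=\abs{\boldsymbol{\nu}}$, the $T$-weight becomes $T^{\abs{\boldsymbol{\nu}}}$, the cells $s\in\la$ with $h(s)\equiv 0\Mod r$ biject with the cells $s'$ of the components of $\boldsymbol{\nu}$, and $h(s)=r\,h(s')$. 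What is \emph{not} supplied by \eqref{Eq_Lit3} is the behaviour of $a(s)$ and $l(s)$ separately, and pinning this down is the crux of the reduction.

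To get at these, I would work on the Maya diagram of Section~\ref{Sec_Littlewood}. Each hook with $h(s)\equiv 0\Mod r$ corresponds to a right-step at a position $\beta$ and an up-step at a position $\gamma<\beta$ on a common runner $k$, with $\beta-\gamma=r\,h(s')$, and $a(s),l(s)$ are read off as the numbers of up- and right-steps strictly between $\gamma$ and $\beta$. Restricting to runner $k$ returns exactly $a(s')$ and $l(s')$, whence $a(s)=a(s')+A_s$ and $l(s)=l(s')+L_s$, where $A_s,L_s$ count the up- and right-steps on the other $r-1$ runners inside $(\gamma,\beta)$ and satisfy $A_s+L_s=(r-1)\,h(s')$. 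These corrections carry the core, which I would encode by its charge vector $\mathbf{c}=(c_0,\dots,c_{r-1})\in\Z^r$ with $\sum_k c_k=0$, the $r$-cores being in bijection with such vectors. This turns $f_{\omega;r}$ into an explicit sum over $\PP^r$ whose summand depends on $\mathbf{c}$, and the conjecture becomes the vanishing of this dependence after summation.

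To establish that vanishing I would reduce to invariance under the elementary charge shifts $c_a\mapsto c_a+1$, $c_{a+1}\mapsto c_{a+1}-1$, which generate $\{\mathbf{c}:\sum_k c_k=0\}$ and connect any two $r$-cores. A single such shift fixes the quotient but changes $A_s$ and $L_s$ for infinitely many cells at once, so the summand is \emph{not} invariant term-by-term and the cancellation must be global. I would try to produce it as a telescoping identity: interpret the shift as one bead move redistributing the two runners $a$ and $a+1$, and match the resulting ratios of $\theta$-functions through a three-term (Riemann-type) theta relation, summed against the configurations on the remaining runners. Its $p=1$ shadow is precisely the rational $\omega$-independence of \eqref{Eq_rationalsum}, which is itself still open; the only fully proven anchor in this circle of ideas is the $r=1$ elliptic formula \eqref{Eq_eNO}.

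The main obstacle is exactly this global cancellation. Because $a(s)$ and $l(s)$ entangle all $r$ runners, the restricted product does not factor across runners the way $\HH_r(\la)$ does under $\phi_r$, so no multiplication-type argument applies and $\omega$-independence is not formal. What is required is a genuinely new elliptic summation/transformation — a higher-rank lift of the identity behind \eqref{Eq_eNO} — that accommodates the infinitely many simultaneously varying corrections produced by a charge shift. Guessing the exact theta identity and controlling those corrections is where I expect the difficulty to concentrate, which is no doubt why the statement is recorded here only as a conjecture.
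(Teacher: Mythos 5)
There is a genuine gap, and it is precisely the gap that makes the statement a conjecture: the paper contains no proof of Conjecture~\ref{Conj_elliptic}, only verification for $n=(\abs{\la}-\abs{\omega})/r\leq 1$ (where the $\omega$-independence is trivial, respectively identical in $\omega$) and a check of the first nontrivial case $n=2$ comparing $\omega=0$ with $\omega=(1)$. Your proposal, as you concede in your final paragraph, likewise does not prove the statement: the decisive step --- the global cancellation of the charge-shift dependence --- is deferred to an unidentified ``genuinely new elliptic summation/transformation''. Everything before that point is correct but essentially cost-free bookkeeping; the entire mathematical content of the conjecture is concentrated in the step you do not supply, so the proposal must be judged a research plan rather than a proof.

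That said, your scaffolding is sound and consistent with the paper's own evidence. The Maya-diagram accounting is right: a cell $s\in\la$ with $h(s)\equiv 0\Mod{r}$ corresponds to a $1$ at position $i$ and a $0$ at position $j>i$ with $i\equiv j \pmod{r}$, and since $a(s)$ and $l(s)$ count the intermediate $1$s and $0$s respectively, one indeed gets $a(s)=a(s')+A_s$ and $l(s)=l(s')+L_s$ with $A_s+L_s=(r-1)h(s')$ coming from the other $r-1$ runners; and the elementary shifts $c_a\mapsto c_a+1$, $c_{a+1}\mapsto c_{a+1}-1$ do generate the lattice $\{\mathbf{c}\in\Z^r:\sum_k c_k=0\}$ parametrising $r$-cores. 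Moreover, the paper's $n=2$ computation corroborates your diagnosis of where the difficulty lies: after matching $\binom{r}{2}+2r-3$ terms of $f_{0;r,2}$ with terms of $f_{(1);r,2}$, the six residual terms cancel only in grouped pairs via the addition formula \eqref{Eq_addition}, yielding $t_1+t_2+t_3=0$ --- so the cancellation is neither termwise nor a simple telescope even in the smallest nontrivial case, exactly as you predict, and the paper explicitly remarks that no multiplication-type theorem applies and that the $\omega$-independence of the $p=1$ shadow \eqref{Eq_rationalsum} is itself open. One further caution: your hope that a single charge shift can be handled by ``one bead move'' matched against a three-term theta relation understates the problem, since a charge shift alters the corrections $A_s,L_s$ for infinitely many cells simultaneously, whereas \eqref{Eq_addition} handles one four-theta exchange at a time; already at $n=2$ the paper needs the terms reorganised nontrivially before the addition formula can bite, and nothing in your outline controls this reorganisation for general $n$.
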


We have not yet found a (conjectural) closed-form expression for the above 
sum, except when $t=q$, see Proposition~\ref{Prop_eqNO} below.

Conjecture~\ref{Conj_elliptic} may also be stated as the claim that for all
nonnegative integers $n$ the sum
\[
f_{\omega;r,n}(u;q,t;p) :=
\sum_{\substack{\la\vdash \abs{\omega}+rn \\[1pt] \core{\la}=\omega}} \;
\prod_{\substack{s\in\la \\[1pt] h(s)\equiv 0 \Mod{r}}}
\frac{\theta(uq^{a(s)+1}t^{l(s)};p)\theta(u^{-1}q^{a(s)}t^{l(s)+1};p)}
{\theta(q^{a(s)+1}t^{l(s)};p)\theta(q^{a(s)}t^{l(s)+1};p)},
\]
which satisfies the quasi-periodicity
\[
f_{\omega;r,n}(pu;q,t;p)=\Big(\frac{t}{u^2pq}\Big)^n f_{\omega;r,n}(u;q,t;p),
\]
does not depend on the choice of $\omega\in\CC_r$. 
For $n=0$ this is 
trivially true: $f_{\omega;r,0}(u;q,t;p)=1$ for all $\omega\in\CC_r$. 
For $n=1$ it is identically true since 
\[
f_{\omega;r,1}(u;q,t;p)=
\sum_{k=1}^r
\frac{\theta(uq^kt^{r-k};p)\theta(u^{-1}q^{k-1}t^{r-k+1};p)}
{\theta(q^nt^{r-k};p)\theta(q^{k-1}t^{r-k+1};p)}
\]
independent of the $r$-core $\omega$.
For $n\geq 2$, however, theta-function addition formulas come into play.
For example, from the Littlewood decomposition it follows that
there are $2r+\binom{r}{2}$ partitions $\la$ contributing to 
$f_{\omega;r,2}(u;q,t;p)$ since the $r$-quotient $\boldsymbol{\nu}$ of 
$\la$ is given by one of the following:
\begin{itemize}
\item[(i)]
$\boldsymbol{\nu}=(\underbrace{0,\dots,0}_{i-1 \text{ times}},(2),
\underbrace{0,\dots,0}_{r-i \text{ times}})$ for
$1\leq i\leq r$, contributing $r$ terms; 

\smallskip

\item[(ii)]
$\boldsymbol{\nu}=(\underbrace{0,\dots,0}_{i-1 \text{ times}},(1^2),
\underbrace{0,\dots,0}_{r-i \text{ times}})$ for
$1\leq i\leq r$, contributing $r$ terms;

\smallskip

\item[(iii)]
$\boldsymbol{\nu}=(\underbrace{0,\dots,0}_{i-1 \text{ times}},(1),
\underbrace{0,\dots,0}_{j-i-1 \text{ times}},(1),
\underbrace{0,\dots,0}_{r-j \text{ times}})$ for 
$1\leq i<j\leq r$, contributing $\binom{r}{2}$ terms.
\end{itemize}

Accordingly, we symbolically write
\[
f_{\omega;r,2}(u;q,t;p)=
\sum_{i=1}^r \Big( \varphi^{\textrm{(i)}}_i(\omega)+
\varphi^{\textrm{(ii)}}_i(\omega)\Big)+
\sum_{1\leq i<j\leq r} \varphi^{\textrm{(iii)}}_{i,j}(\omega).
\]
Because the terms on the right sensitively depend on the
choice of $\omega\in\CC_r$, we further restrict ourselves
to a comparison of $\omega=0$ and $\omega=(1)$
(so that we require $r\geq 2$).
Then $\binom{r}{2}+2r-3$ of the terms contributing to
$f_{0;r,2}(u;q,t;p)$ have a counterpart in $f_{(1);r,2}(u;q,t;p)$
and $3$ terms in each of the sums are different.
More precisely,
\begin{align*}
\varphi^{\textrm{(i)}}_i(0)&=\begin{cases}
\varphi^{\textrm{(i)}}_i(1) & \text{for $2\leq i\leq r-1$}, \\[2mm]
\varphi^{\textrm{(i)}}_1(1) & \text{for $i=r$}, \end{cases} \\[2mm]
\varphi^{\textrm{(ii)}}_i(0)&=\begin{cases}
\varphi^{\textrm{(ii)}}_i(1) & \text{for $2\leq i\leq r-1$}, \\[2mm]
\varphi^{\textrm{(ii)}}_r(1) & \text{for $i=1$}, \end{cases}
\intertext{and}
\varphi^{\textrm{(iii)}}_{i,j}(0)&=\begin{cases}
\varphi^{\textrm{(iii)}}_{i,j}(1) & 
\text{for $1<i<j<r$}, \\[2mm]
\varphi^{\textrm{(iii)}}_{j,r}(1) &
\text{for $i=1$, $1<j<r$}, \\[2mm]
\varphi^{\textrm{(iii)}}_{1,i}(1) &
\text{for $j=r$, $1<i<r$}.
\end{cases}
\end{align*}
Hence
\begin{multline*}
f_{0;r,2}(u;q,t;p)-f_{(1);r,2}(u;q,t;p)
=\varphi^{\textrm{(i)}}_1(0)
+\varphi^{\textrm{(ii)}}_r(0)
+\varphi^{\textrm{(iii)}}_{1,r}(0) \\
-\varphi^{\textrm{(i)}}_r(1)
-\varphi^{\textrm{(ii)}}_1(1)
-\varphi^{\textrm{(iii)}}_{1,r}(1).
\end{multline*}
If by slight abuse of notation we index the above by the actual partitions
$\la$ this yields
\begin{multline*}
f_{0;r,2}(u;q,t;p)-f_{(1);r,2}(u;q,t;p)
=\varphi_{(r+1,1^{r-1})}+\varphi_{(r,1^r)}+\varphi_{(r,2,1^{r-2})} \\
-\varphi_{(r,2,1^{r-1})}-\varphi_{(r+1,2,1^{r-2})}-\varphi_{(r+1,1^r)}.
\end{multline*}
To see that this vanishes we note that
\begin{align*}
&\varphi_{(r+1,1^{r-1})}-\varphi_{(r+1,1^r)} \\
&\qquad=\frac{\theta(uq^r;p)\theta(uq^{r+1}t^{r-1};p)
\theta(u^{-1}q^{r-1}t;p)\theta(u^{-1}q^rt^r;p)}
{\theta(q^r;p)\theta(q^{r+1}t^{r-1};p)
\theta(q^{r-1}t;p)\theta(q^rt^r;p)} \\
&\qquad\quad-\frac{\theta(uq^r;p)\theta(uqt^{r-1};p)
\theta(u^{-1}q^{r-1}t;p)\theta(u^{-1}t^r;p)}
{\theta(q^r;p)\theta(qt^{r-1};p)
\theta(q^{r-1}t;p)\theta(t^r;p)} \\
&\qquad=-q^{2r-1}t^{r+1}\,\frac{\theta(uqt^{-1};p)\theta(uq^{1-r}t^{-1};p)
\theta(u^{-1}q^{-r};p)\theta(u^{-1};p)\theta(q^{r+1}t^{2r-1};p)}
{\theta(q^{r-1}t;p)\theta(qt^{r-1};p)\theta(t^r;p)
\theta(q^{r+1}t^{r-1};p)\theta(q^rt^r;p)} =: t_1.
\end{align*}
Here the last equality follows from the addition formula 
\cite[p.~451, Example 5]{WW96}
\begin{multline}\label{Eq_addition}
\theta(xz;p)\theta(x/z;p)\theta(yw;p)\theta(y/w;p)-
\theta(xw;p)\theta(x/w;p)\theta(yz;p)\theta(y/z;p) \\
=\frac{y}{z}\, \theta(xy;p)\theta(x/y;p)\theta(zw;p)\theta(z/w;p)
\end{multline}
as well as $\theta(z;p)=-z\theta(1/z;p)$.
Similarly, we have
\begin{align*}
&\varphi_{(r,1^r)}-\varphi_{(r,2,1^{r-1})} \\
&\qquad=\frac{\theta(uqt^{r-1};p)\theta(uq^rt^r;p)
\theta(u^{-1}t^r;p)\theta(u^{-1}q^{r-1}t^{r+1};p)}
{\theta(qt^{r-1};p)\theta(q^rt^r;p)\theta(t^r;p)\theta(q^{r-1}t^{r+1};p)} \\
&\qquad\quad-\frac{\theta(uq^{r-1}t;p)\theta(uq^rt^r;p)
\theta(u^{-1}q^{r-2}t^2;p)\theta(u^{-1}q^{r-1}t^{r+1};p)}
{\theta(q^{r-1}t;p)\theta(q^rt^r;p)\theta(q^{r-2}t^2;p)
\theta(q^{r-1}t^{r+1};p)} \\
&\qquad=\frac{\theta(uqt^{-1};p)\theta(uq^rt^r;p)
\theta(u^{-1}q^{r-1}t^{r+1};p)
\theta(u^{-1};p)\theta(q^{2-r}t^{r-2};p)}
{\theta(q^{r-1}t;p)\theta(q^{2-r}t^{-2};p)\theta(qt^{r-1};p) 
\theta(t^r;p)\theta(q^rt^r;p)}=:t_2
\end{align*}
and
\begin{align*}
&\varphi_{(r,2,1^{r-2})}-\varphi_{(r+1,2,1^{r-2})} \\
&\qquad=\frac{\theta(uq^{r-1}t;p)\theta(uq^2t^{r-2};p)
\theta(u^{-1}q^{r-2}t^2;p)\theta(u^{-1}qt^{r-1};p)}
{\theta(q^{r-1}t;p)\theta(q^2t^{r-2};p)\theta(q^{r-2}t^2;p)
\theta(qt^{r-1};p)} \\ 
&\qquad\quad-
\frac{\theta(uq^2t^{r-2};p)\theta(uq^{r+1}t^{r-1};p)
\theta(u^{-1}qt^{r-1};p)\theta(u^{-1}q^rt^r;p)}
{\theta(q^2t^{r-2};p)\theta(q^{r+1}t^{r-1};p)\theta(qt^{r-1};p)
\theta(q^rt^r;p)} \\
&\qquad=
-\frac{\theta(uq^2t^{r-2};p)\theta(uqt^{-1};p)\theta(u^{-1}qt^{r-1};p)
\theta(u^{-1};p)\theta(q^{2r-1}t^{r+1};p)}
{\theta(q^{r-1}t;p)\theta(q^{2-r}t^{-2};p)\theta(qt^{r-1};p)
\theta(q^{r+1}t^{r-1};p)\theta(q^rt^r;p)}=:t_3.
\end{align*}
By one more application of \eqref{Eq_addition} it follows that
$t_1+t_2+t_3=0$, and hence that
\[
f_{0;r,2}(u;q,t;p)=f_{(1);r,2}(u;q,t;p)
\]
for $r\geq 2$.

If we set $t=q$ in the elliptic Nekrasov--Okounkov formula \eqref{Eq_eNO}
we obtain
\begin{multline*}
\sum_{\la} T^{\abs{\la}} \prod_{h\in\HH(\la)}
\frac{\theta(uq^h;p)\theta(u^{-1}q^h;p)}{\theta(q^h;p)\theta(q^h;p)} 
=
\frac{(uqT,u^{-1}qT;q,q,T)_{\infty}}{(T,q^2T;q,q,T)_{\infty}}  \\ \times
\prod_{m,k\geq 1} \, \prod_{\ell,n_1,n_2\in\Z} 
\bigg(\frac{(p^m T^k u^{\ell+1} q^{n_2-n_1+1},
p^m T^k u^{\ell-1} q^{n_2-n_1+1};q,q)_{\infty}}
{(p^m T^k u^{\ell} q^{n_2-n_1},
p^m T^k u^{\ell} q^{n_2-n_1+2};q,q)_{\infty}}
\bigg)^{C(km,\ell,n_1,n_2)}.
\end{multline*}
By Theorem~\ref{Thm_HanJi} this implies our next result.

\begin{proposition}[A $p,q$-Nekrasov--Okounkov formula]\label{Prop_eqNO}
For $r$ a positive integer,
\begin{align*}
&\sum_{\la\in\PP}T^{\abs{\la}}S^{\abs{\HH_r(\la)}}
\prod_{h\in \HH_r(\la)} 
\frac{\theta(uq^h;p)\theta(u^{-1}q^h;p)}{\theta(q^h;p)\theta(q^h;p)} \\[2mm]
&=\frac{(T^r;T^r)_{\infty}^r}{(T;T)_{\infty}}\,
\bigg(
\frac{(uq^rST^r,u^{-1}q^rST^r;q^r,q^r,ST^r)_{\infty}}
{(ST^r,q^{2r}ST^r;q^r,q^r,ST^r)_{\infty}}\bigg)^r \\[2mm]
&\;\, \times
\prod_{m,k\geq 1} \, \prod_{\ell,n_1,n_2\in\Z} 
\bigg(\frac{(p^m S^kT^{kr} u^{\ell+1} q^{(n_2-n_1+1)r},
p^m S^kT^{kr} u^{\ell-1} q^{(n_2-n_1+1)r};q^r,q^r)_{\infty}}
{(p^m S^kT^{kr} u^{\ell} q^{(n_2-n_1)r},
p^m S^kT^{kr} u^{\ell} q^{(n_2-n_1+2)r};q^r,q^r)_{\infty}}
\bigg)^{C(km,\ell,n_1,n_2)r}.
\end{align*}
\end{proposition}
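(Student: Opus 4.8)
The plan is to read off the appropriate weight function $\rho$ and then feed the displayed $t=q$ specialisation of the elliptic formula \eqref{Eq_eNO} into the Han--Ji multiplication theorem (Theorem~\ref{Thm_HanJi}). First I would set
\[
\rho(h)=\frac{\theta(uq^h;p)\theta(u^{-1}q^h;p)}{\theta(q^h;p)\theta(q^h;p)},
\]
so that the left-hand side of the Proposition is literally the left-hand side of \eqref{Eq_trafo}, namely $\sum_{\la\in\PP}T^{\abs{\la}}S^{\abs{\HH_r(\la)}}\prod_{h\in\HH_r(\la)}\rho(h)$. With this choice, the seed series $f_r(T)$ of \eqref{Eq_seed} involves $\rho(rh)$, and the key observation is that $\rho(rh)$ is simply $\rho(h)$ with $q$ replaced by $q^r$, since $(q^r)^h=q^{rh}$. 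The displayed $t=q$ identity just before the Proposition is precisely $f_1(T)=\sum_{\la}T^{\abs{\la}}\prod_{h\in\HH(\la)}\rho(h)$ for this $\rho$, so $f_r(T)$ is obtained from it by the purely formal substitution $q\mapsto q^r$.

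The next step is bookkeeping. Because the integers $C(km,\ell,n_1,n_2)$ are fixed once and for all by their defining expansion, they are inert under $q\mapsto q^r$; the substitution therefore only rescales the explicit powers of $q$ in the bases of the multiple shifted factorials, sending $q^{n_2-n_1+1}$ to $q^{(n_2-n_1+1)r}$ and so on, and turning each base $(\cdots;q,q)_\infty$ into $(\cdots;q^r,q^r)_\infty$. This yields a closed form for $f_r(T)$ whose main factor is $(uq^rT,u^{-1}q^rT;q^r,q^r,T)_\infty/(T,q^{2r}T;q^r,q^r,T)_\infty$ multiplied by the corresponding $p$-product. I would then apply \eqref{Eq_trafo} directly: substitute $T\mapsto ST^r$ into $f_r$, raise the resulting expression to the $r$-th power, and prepend the universal factor $(T^r;T^r)_\infty^r/(T;T)_\infty$.

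Finally I would match the two sides term by term. The substitution $T\mapsto ST^r$ sends $T^k$ to $(ST^r)^k=S^kT^{kr}$, producing the bases $p^mS^kT^{kr}$ appearing in the Proposition; raising to the $r$-th power multiplies every exponent $C(km,\ell,n_1,n_2)$ by $r$, giving the stated $C(km,\ell,n_1,n_2)r$, and also raises the elliptic main factor to its $r$-th power, yielding the displayed $(\cdots)^r$ block. I do not anticipate a genuine analytic obstacle: the argument is a direct specialise-and-substitute application, so the only point demanding care is the clean justification that $\rho(rh)$ equals $\rho(h)$ under $q\mapsto q^r$ and that the combinatorial coefficients $C(km,\ell,n_1,n_2)$ remain unchanged under both the $q\mapsto q^r$ and $T\mapsto ST^r$ substitutions, so that no reindexing of the triple product over $(\ell,n_1,n_2)\in\Z^3$ is required.
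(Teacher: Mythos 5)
Your proposal is correct and takes essentially the same route as the paper: there, too, the proof consists of setting $t=q$ in the elliptic Nekrasov--Okounkov formula \eqref{Eq_eNO} and feeding the resulting identity into the Han--Ji multiplication theorem (Theorem~\ref{Thm_HanJi}) with $\rho(h)=\theta(uq^h;p)\theta(u^{-1}q^h;p)/\theta(q^h;p)^2$, using exactly your observation that $\rho(rh)$ is $\rho(h)$ under $q\mapsto q^r$ while the integer coefficients $C(km,\ell,n_1,n_2)$ are unaffected. Your final bookkeeping ($T\mapsto ST^r$, the $r$-th power multiplying each exponent $C(km,\ell,n_1,n_2)$ by $r$, and prepending $(T^r;T^r)_{\infty}^r/(T;T)_{\infty}$) is precisely the content of \eqref{Eq_trafo}.
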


This naturally leads to the following open problem.

\begin{problem}
For $r\geq 2$, identify the integers $B_r(m,\ell,n_1,n_2)$ in the expansion
\begin{align*}
\sum_{\la\in\PP}&T^{\abs{\la}}S^{\abs{\HH_r(\la)}}
\prod_{\substack{s\in\la \\[1pt] h(s)\equiv 0 \Mod{r}}}
\frac{\theta(uq^{a(s)+1}t^{l(s)},u^{-1}q^{a(s)}t^{l(s)+1};p)}
{\theta(q^{a(s)+1}t^{l(s)},q^{a(s)}t^{l(s)+1};p)} \\[2mm]
&=\frac{(T^r;T^r)_{\infty}^r}{(T;T)_{\infty}(ST^r;ST^r)_{\infty}^r}
\prod_{i=1}^r 
\frac{(uq^it^{r-i}ST^r,u^{-1}q^{r-i}t^iST^r;q^r,t^r,ST^r)_{\infty}}
{(q^it^{r-i}ST^r,q^{r-i}t^iST^r;q^r,t^r,ST^r)_{\infty}} \\[2mm]
&\quad\qquad \times
\prod_{m,k\geq 1} \prod_{\ell,n_1,n_2\in\Z}
\big(1-p^m (ST^r)^k u^{\ell} q^{n_1} t^{n_2}\big)^{B_r(mk,\ell,n_1,n_2)}.
\end{align*}
\end{problem}

\subsection{The Buryak--Feigin--Nakajima formula}\label{Sec_BFN}

Let $(\mathbb{C}^2)^{[n]}$ be the Hilbert scheme of points in the
plane, parametrising the ideals $I$ of $\mathbb{C}[x,y]$ of colength
$n$.
The action of the torus $(\mathbb{C}^{\ast})^2$ on 
$\mathbb{C}^2$ given by $(s,t)\cdot(x,y)=(sx,ty)$ lifts to
an action on $(\mathbb{C}^2)^{[n]}$.
For $\alpha,\beta$ nonnegative integers such that $\alpha+\beta\geq 1$,
define the one-dimensional subtorus $T_{\alpha,\beta}$ of 
$(\Complex^{\ast})^2$ by
$T_{\alpha,\beta}:=\{(t^{\alpha},t^{\beta}):~t\in\Complex^{\ast}\}$.
When both $\alpha$ and $\beta$
are strictly positive, the set of fixed points
\[
(\Complex^2)^{[n]}_{\alpha,\beta}:=
\big((\Complex^2)^{[n]}\big)^{T_{\alpha,\beta}}
\]
parametrises quasi-homogeneous ideals $I\subseteq \mathbb{C}[x,y]$
of colength $n$, and is known as the quasihomogeneous Hilbert 
scheme \cite{BF13,Evain04}.
Let $P(X;z):=\sum_{i\geq 0} \dim H_i(X;\mathbb{Q}) z^{i/2}$ denote
the Poincar\'e polynomial of a manifold $X$, where $H_i$ is the $i$th
homology group (over $\mathbb{Q}$) and $\dim H_i$ the $i$th Betti number. 
Proving an earlier conjecture of Buryak \cite[Conjecture~1.4]{Buryak12},  
Buryak and Feigin \cite[Theorem~1]{BF13} proved the following 
beautiful identity for the generating function  of the Poincar\'e 
polynomial of the quasihomogeneous Hilbert scheme.
\begin{theorem}
For $\alpha,\beta$ positive integers such that $\gcd(\alpha,\beta)=1$,
let $r:=\alpha+\beta$. Then
\[
\sum_{n=0}^{\infty} P\big((\Complex^2)^{[n]}_{\alpha,\beta};z\big) T^n=
\frac{(T^r;T^r)_{\infty}}{(T;T)_{\infty}(zT^r;T^r)_{\infty}}.
\]
\end{theorem}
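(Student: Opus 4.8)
The plan is to compute the left-hand side geometrically, by torus localization and Bialynicki--Birula theory, and then to recognise the resulting combinatorial sum as the $S=1$ specialisation of \eqref{Eq_app1b}. Write $\T=(\Complex^\ast)^2$ for the full torus, recording its characters additively as pairs $(m_1,m_2)\in\Z^2$, so that $(m_1,m_2)$ restricts on $T_{\alpha,\beta}$ to the weight $\alpha m_1+\beta m_2$. The $\T$-fixed points of $(\Complex^2)^{[n]}$ are the monomial ideals $I_\la$ indexed by $\la\vdash n$, and by the standard Nakajima formula the tangent space at $I_\la$ carries the $\T$-character $\sum_{s\in\la}\big((-l(s),a(s)+1)+(l(s)+1,-a(s))\big)$. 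Every $I_\la$ lies in the smooth fixed locus $X:=(\Complex^2)^{[n]}_{\alpha,\beta}$, and the residual rank-one torus $\T/T_{\alpha,\beta}$ acts on $X$ with isolated fixed points --- precisely the $I_\la$. Choosing a generic cocharacter, the Bialynicki--Birula decomposition paves $X$ by affine cells, one through each $I_\la$, so that $H_{\mathrm{odd}}(X)=0$ and $P(X;z)=\sum_{\la\vdash n} z^{d_\la}$, where $d_\la$ is the complex dimension of the cell through $I_\la$.

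It then remains to identify $d_\la$, which equals the number of tangent weights that are trivial on $T_{\alpha,\beta}$ and positive for the chosen cocharacter. A weight $(-l(s),a(s)+1)$ is $T_{\alpha,\beta}$-trivial iff $\beta(a(s)+1)=\alpha\,l(s)$, and a weight $(l(s)+1,-a(s))$ is trivial iff $\alpha(l(s)+1)=\beta\,a(s)$. Since $\gcd(\alpha,\beta)=1$, the first forces $a(s)+1=\alpha k$ and $l(s)=\beta k$ for some $k\geq 1$, and the second forces $a(s)=\alpha k$ and $l(s)+1=\beta k$; in either case $h(s)=a(s)+l(s)+1=(\alpha+\beta)k=rk$, so only boxes with $h(s)\equiv 0\Mod r$ contribute. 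A short check of the cocharacter pairing shows that the two families carry opposite signs, so that exactly one weight per contributing box is positive. Hence $d_\la=\#\{s\in\la: a(s)+1=\alpha k,\ l(s)=\beta k\ \text{for some }k\geq 1\}$.

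To finish, observe that setting $S=1$ in \eqref{Eq_app1b} (which was derived from $\rho\equiv z$) collapses it to $\sum_{\la\in\PP}T^{\abs{\la}}z^{\abs{\HHb{r}(\la)}}=(T^r;T^r)_\infty/\big((T;T)_\infty(zT^r;T^r)_\infty\big)$, which is exactly the asserted right-hand side. The theorem therefore reduces to the purely combinatorial claim that, for every $n$, the statistic $d_\la$ is equidistributed with $\abs{\HHb{r}(\la)}$ over partitions of $n$. The main obstacle is precisely this step: although $d_\la$ is defined using $\alpha$ and $\beta$ separately, its distribution must depend only on $r=\alpha+\beta$. I would establish this through Littlewood's decomposition, using \eqref{Eq_Lit3} to transport the boxes of $\la$ with $h(s)\equiv 0\Mod r$ to the boxes of the $r$-quotient $\boldsymbol{\nu}$ and then tracking how the ratio constraint $(a(s)+1,l(s))=(\alpha k,\beta k)$ distributes across the $r$ components; alternatively one can argue directly on the $0/1$-sequence of $\la$, in which such a box is a pair of entries a distance $rk$ apart with prescribed numbers of intervening zeros and ones, and show that the resulting generating function factors independently of the splitting of $r$. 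Either route isolates the $(\alpha,\beta)$-independence as the one genuinely nontrivial point, with everything else reducing to localization bookkeeping.
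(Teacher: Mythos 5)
Your geometric reduction is sound and is in fact the same route as in Buryak--Feigin \cite{BF13}, which is all the paper itself does for this statement: the paper quotes the theorem and then records the Bialynicki--Birula input, namely that the cell through the fixed point $I_\la$ has dimension $\BF_{\alpha,\beta}(\la)$. Your tangent-weight bookkeeping is correct: the two families of $T_{\alpha,\beta}$-trivial weights pair with opposite signs against a generic residual cocharacter, giving $d_\la=\#\{s\in\la:\,(a(s)+1,l(s))=(\alpha k,\beta k),\ k\geq 1\}=\BF_{\alpha,\beta}(\la)$, where for coprime positive $\alpha,\beta$ the congruence $h(s)\equiv 0\Mod{r}$ is automatic, exactly as the paper notes. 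Also correct is that the $S=1$ specialisation of \eqref{Eq_app1b} produces the asserted right-hand side. So you have faithfully reduced the theorem to the combinatorial equidistribution of $\BF_{\alpha,\beta}$ with $\BF_{r,0}=\abs{\HHb{r}(\cdot)}$, i.e.\ to Theorem~\ref{Thm_BF}.

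That final step, however, is the entire content of the theorem, and your proposal leaves it as a two-sentence sketch. The paper is explicit that this is the hard point (``it is not at all clear why $\BF_{\alpha,\beta}(n)=\BF_{\alpha+\beta,0}(n)$''), and the Littlewood route you gesture at stalls immediately: by \eqref{Eq_Lit3} a box $s$ with $h(s)=rk$ does correspond to a box $s'$ of the $r$-quotient with $h(s')=k$, but its arm and leg split as $a(s)=r\,a(s')+\epsilon$ and $l(s)=r\,l(s')+r-1-\epsilon$ with an offset $\epsilon\in\{0,\dots,r-1\}$ depending on where the hook meets the boundary of $\la$, so the constraint $\alpha\,l(s)=\beta(a(s)+1)$ does not descend to a condition on the quotient alone --- it entangles core and quotient data, and no product formula factors out. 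This is precisely why the paper's own machinery (Theorem~\ref{Thm_new} and \eqref{Eq_app1}) captures only the case $(\alpha,\beta)=(r,0)$ --- which, note, is not even an admissible pair in the theorem you are proving, since $\beta>0$ is required --- why the refined identity \eqref{Eq_BFza} for general $(\alpha,\beta)$ is extracted by inspecting the proof in \cite[Section 3]{BF13} rather than re-derived, and why extending the multiplication theorem to all $\BFset_{\alpha,\beta}(\la)$ is posed as an open problem at the end of the paper. To close the gap you would need to actually carry out the nontrivial argument of \cite{BF13} (or the interpolation of \cite{BFN15}); as written, your proof establishes the localization bookkeeping but not the theorem for any coprime positive pair $(\alpha,\beta)$.
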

Subsequently, Buryak, Feigin and Nakajima \cite{BFN15} obtained a
more general result which eliminates the need for the restriction
that $\alpha$ and $\beta$ are coprime. To this end the torus
$T_{\alpha,\beta}$ is replaced by 
$T_{\alpha,\beta}\times\Gamma_{\alpha+\beta}$,
where 
\[
\Gamma_r:=\big\{(\eup^{2\pi\iup k/r},\eup^{-2\pi\iup k/r})
\in (\mathbb{C}^{\ast})^2: 0\leq k\leq r-1\big\},
\]
and singular homology is replaced by Borel--Moore (BM) homology.

\begin{theorem}
For $\alpha,\beta$ nonnegative integers such that $\alpha+\beta\geq 1$,
let $r:=\alpha+\beta$. Then
\[
\sum_{n=0}^{\infty} P^{\textup{BM}}\Big(
\big((\Complex^2)^{[n]}_{\alpha,\beta}\big)^{\Gamma_{\alpha+\beta}};
z\Big) T^n=
\frac{(T^r;T^r)_{\infty}}{(T;T)_{\infty}(zT^r;T^r)_{\infty}},
\]
where
\[
P^{\textup{BM}}(X;z)
:=\sum_{i\geq 0} \dim H^{\textup{BM}}_i(X;\mathbb{Q}) z^{i/2}.
\]
\end{theorem}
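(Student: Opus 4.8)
The statement is geometric, so the plan is to reduce it to the combinatorial generating function already established in Section~\ref{Sec_Applications}. Specialising equation~\eqref{Eq_app1b} to $S=1$ gives
\[
\sum_{\la\in\PP} T^{\abs{\la}} z^{\abs{\HHb{r}(\la)}}
=\frac{(T^r;T^r)_{\infty}}{(T;T)_{\infty}(zT^r;T^r)_{\infty}},
\]
which is exactly the claimed right-hand side. Hence it suffices to prove, for each $n$, the purely geometric-combinatorial identity
\[
P^{\textup{BM}}\Big(\big((\Complex^2)^{[n]}_{\alpha,\beta}\big)^{\Gamma_{\alpha+\beta}};z\Big)
=\sum_{\la\vdash n} z^{\abs{\HHb{r}(\la)}},
\]
after which summing over $n$ and invoking the display above closes the argument.

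First I would set up a Białynicki--Birula / torus-localisation decomposition of $Y_n:=\big((\Complex^2)^{[n]}_{\alpha,\beta}\big)^{\Gamma_r}$. The monomial ideals of colength $n$, indexed by partitions $\la\vdash n$, are fixed by the whole torus $(\Complex^\ast)^2$ and hence lie in $Y_n$. Choosing a generic one-parameter subgroup $\tau$ of the rank-one residual torus acting on $Y_n$, these should be precisely the $\tau$-fixed points, and $Y_n$ should be paved by the attracting cells $C_\la$, each an affine space. Granting that this paving has only even real-dimensional strata, so that Borel--Moore homology is free with one generator per cell, one obtains $P^{\textup{BM}}(Y_n;z)=\sum_{\la\vdash n} z^{\dim_{\Complex} C_\la}$.

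The core computation is then to identify $\dim_{\Complex} C_\la$ combinatorially. At the monomial fixed point $\la$ the tangent space to $(\Complex^2)^{[n]}$ has the Ellingsrud--Str\o mme weight decomposition $\sum_{s\in\la}\big(t_1^{l(s)+1}t_2^{-a(s)}+t_1^{-l(s)}t_2^{a(s)+1}\big)$. Under the antidiagonal cocharacter $(t,t^{-1})$ the two tangent weights of a box $s$ are $+h(s)$ and $-h(s)$, so $\Gamma_r$-invariance retains exactly the directions with $h(s)\equiv 0\pmod r$, while $T_{\alpha,\beta}$-invariance (vanishing of the pairing with $(\alpha,\beta)$) further constrains which of these survive. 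I would then show that for generic $\tau$ the number of attracting directions among the surviving tangent weights equals the number of bottom squares of $\la$ whose hook-length is divisible by $r$, i.e. $\dim_{\Complex} C_\la=\abs{\HHb{r}(\la)}$. A reassuring check is that this count depends only on $r=\alpha+\beta$ and not on $\alpha,\beta$ separately, matching the fact that the right-hand side depends only on $r$; the smallest case $r=2$, $\alpha=\beta=1$, $n=2$ reproduces $Y_2\cong\mathbb{P}^1$ with $P^{\textup{BM}}=1+z$, in agreement with $\abs{\HHb{2}(1,1)}=0$ and $\abs{\HHb{2}(2)}=1$.

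The main obstacle is this geometric step rather than the combinatorics. Two points require genuine care: establishing that the Białynicki--Birula paving of $Y_n$ is by affine cells of purely even dimension, so that the Borel--Moore Poincaré polynomial really is the cell-dimension generating function with no odd classes and no cancellation; and pinning down the cell dimension as exactly $\abs{\HHb{r}(\la)}$ with the correct attracting/repelling sign convention, particularly in the non-coprime case $\gcd(\alpha,\beta)>1$, where $T_{\alpha,\beta}$ has a nontrivial generic stabiliser and the finite group $\Gamma_r$ is genuinely needed to rigidify the fixed locus. Once $\dim_{\Complex} C_\la=\abs{\HHb{r}(\la)}$ is secured, the combinatorial sum is immediate from \eqref{Eq_app1b} at $S=1$.
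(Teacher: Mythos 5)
Your overall skeleton---a Bia{\l}ynicki-Birula paving of $Y_n:=\big((\Complex^2)^{[n]}_{\alpha,\beta}\big)^{\Gamma_r}$ into affine cells of even real dimension, so that $P^{\textup{BM}}(Y_n;z)=\sum_{\la\vdash n}z^{\dim_{\Complex}C_\la}$, followed by the $S=1$ specialisation of \eqref{Eq_app1b}---is exactly the route the paper sketches, following Buryak--Feigin and Buryak--Feigin--Nakajima. But your pivotal claim, that for a suitable generic cocharacter the cell at the monomial fixed point $\la$ has dimension $\abs{\HHb{r}(\la)}$ for \emph{all} $(\alpha,\beta)$ with $\alpha+\beta=r$, is false, and it is precisely where the difficulty of the theorem lives. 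The correct cell dimension is the Buryak--Feigin statistic $\BF_{\alpha,\beta}(\la)$, counting squares with $\alpha\,l(s)=\beta\,a(s)+\beta$ and $h(s)\equiv 0\Mod{r}$, and this genuinely depends on the splitting of $r$ into $\alpha+\beta$, not just on $r$. Already for $\la=(2,1)$, $r=3$, $(\alpha,\beta)=(2,1)$: the square $(1,1)$ has $a=l=1$, $h=3$, so $\BF_{2,1}\big((2,1)\big)=1$ while $\abs{\HHb{3}(2,1)}=0$; conversely $\BF_{2,1}\big((3)\big)=0$ while $\abs{\HHb{3}(3)}=1$. The two statistics agree only as \emph{multisets} over $\la\vdash n$---the one-dimensional cell sits at a different fixed point---and no attracting/repelling convention repairs this uniformly: for $(\alpha,\beta)=(1,2)$ the opposite convention (counting squares with $\alpha(l(s)+1)=\beta\,a(s)$, $h(s)\equiv 0\Mod{r}$) again assigns dimension $1$ to $(2,1)$. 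Your own $n=2$, $\alpha=\beta=1$ check exhibits the same swap: $\BF_{1,1}\big((1,1)\big)=1$ and $\BF_{1,1}\big((2)\big)=0$, so the total Poincar\'e polynomial $1+z$ agrees, but not cell-by-cell as your tangent-weight argument would require. Your localisation bookkeeping is otherwise fine---the two tangent weights of a square $s$ restrict on the antidiagonal $\Gamma_r$ to $\pm h(s)$, and $T_{\alpha,\beta}$-invariance singles out $\alpha l(s)=\beta(a(s)+1)$ resp.\ $\alpha(l(s)+1)=\beta a(s)$---but that computation yields $\BF_{\alpha,\beta}(\la)$, not $\abs{\HHb{r}(\la)}$.

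Consequently your argument, as written, proves the theorem only for $(\alpha,\beta)=(r,0)$, where $\BF_{r,0}(\la)=\abs{\HHb{r}(\la)}$ and \eqref{Eq_app1b} at $S=1$ indeed closes the case. For $\beta>0$ the missing ingredient is exactly Theorem~\ref{Thm_BF}, i.e.\ the multiset identity $\BF_{\alpha,\beta}(n)=\BF_{\alpha+\beta,0}(n)$, which the paper explicitly flags as the hard, non-local step (``it is not at all clear why $\BF_{\alpha,\beta}(n)=\BF_{\alpha+\beta,0}(n)$'') and which Buryak and Feigin establish by a separate combinatorial argument in \cite[Section 3]{BF13}, not by matching tangent weights at individual fixed points. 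You need either to import that combinatorial theorem, or to supply a bijection on $\{\la\vdash n\}$ (compatible with $r$-cores, as the paper notes the stronger refinement $\BF_{\alpha,\beta}(n;\omega)=\BF_{\alpha+\beta,0}(n;\omega)$ holds) carrying $\BF_{\alpha,\beta}$ to $\abs{\HHb{r}(\cdot)}$; without one of these, the reduction to \eqref{Eq_app1b} does not go through.
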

For positive, coprime $\alpha$ and $\beta$,
$\Gamma_{\alpha+\beta}\subseteq T_{\alpha+\beta}$, so that
$((\Complex^2)^{[n]}_{\alpha,\beta})^{\Gamma_{\alpha+\beta}}
=(\Complex^2)^{[n]}_{\alpha,\beta}$.
Positivity also implies compactness, in which case both homology theories 
are equivalent.
Hence the second theorem contains the first as special case.

Both theorems admit a purely combinatorial description in terms of a 
statistic on partitions introduced by Buryak and Feigin in \cite{BF13}
(and refined in \cite{BFN15} to $\gcd(\alpha,\beta)\neq 1$).
For $\alpha\geq 1$ and $\beta\geq 0$ a pair of integers,
define $\BFset_{\alpha,\beta}(\la)\subseteq\la$ and
$\BF_{\alpha,\beta}(\la)\in \mathbb{N}_0$ by
\[
\BFset_{\alpha,\beta}(\la)=
\big\{s\in\la:~\alpha\, l(s)=\beta\, a(s)+\beta \text{ and }
h(s)\equiv 0 \pmod{\alpha+\beta} \big\}
\]
and
\[
\BF_{\alpha,\beta}(\la):=\abs{\BFset_{\alpha,\beta}(\la)}.
\]

For example, for the partition $\la=(7,6,4,4,2,1)$ the sets
$\BFset_{4,2}(\la)\subset\BFset_{2,1}(\la)$ 
are given by coloured squares in the two diagrams below:

\medskip

\begin{center}
\begin{tikzpicture}[scale=0.3]
\tikzmath{\x=13;}
\tikzmath{\y=26;}

\filldraw[blue!20] (5,0) rectangle (6,-1);
\filldraw[blue!20] (2,-1) rectangle (3,-3);
\filldraw[blue!20] (0,-3) rectangle (1,-5);
\draw (0,0)--(7,0);
\draw (0,-1)--(7,-1);
\draw (0,-2)--(6,-2);
\draw (0,-3)--(4,-3);
\draw (0,-4)--(4,-4);
\draw (0,-5)--(2,-5);
\draw (0,-6)--(1,-6);
\draw (0,0)--(0,-6);
\draw (1,0)--(1,-6);
\draw (2,0)--(2,-5);
\draw (3,0)--(3,-4);
\draw (4,0)--(4,-4);
\draw (5,0)--(5,-2);
\draw (6,0)--(6,-2);
\draw (7,0)--(7,-1);
\draw (3.5,-7.5) node {$\BFset_{2,1}(\la)$};
\filldraw[blue!20] (\x+2,-1) rectangle (\x+3,-2);
\filldraw[blue!20] (\x+0,-3) rectangle (\x+1,-4);
\draw (\x+0,0)--(\x+7,0);
\draw (\x+0,-1)--(\x+7,-1);
\draw (\x+0,-2)--(\x+6,-2);
\draw (\x+0,-3)--(\x+4,-3);
\draw (\x+0,-4)--(\x+4,-4);
\draw (\x+0,-5)--(\x+2,-5);
\draw (\x+0,-6)--(\x+1,-6);
\draw (\x+0,0)--(\x+0,-6);
\draw (\x+1,0)--(\x+1,-6);
\draw (\x+2,0)--(\x+2,-5);
\draw (\x+3,0)--(\x+3,-4);
\draw (\x+4,0)--(\x+4,-4);
\draw (\x+5,0)--(\x+5,-2);
\draw (\x+6,0)--(\x+6,-2);
\draw (\x+7,0)--(\x+7,-1);
\draw (\x+3.5,-7.5) node {$\BFset_{4,2}(\la)$};
\end{tikzpicture}
\end{center}
Hence $\BF_{2,1}(\la)=5$ and $\BF_{4,2}(\la)=2$.
Obviously, we further have $\BF_{2k,k}(\la)=0$ for $k\geq 3$.

Note that if $\alpha$ and $\beta$ are positive coprime integers
then we may drop the congruence condition on the hook-lengths of $s$.
Indeed, $\alpha l(s)=\beta a(s)+\beta$ implies $\alpha\mid a(s)+1$ and
$\beta\mid l(s)$, so that
\[
h(s)=a(s)+l(s)+1=(\alpha/\beta+1)l(s)=(\alpha+\beta)\,\frac{l(s)}{\beta}
\equiv 0\pmod{\alpha+\beta}.
\]
This is in accordance with 
$\Gamma_{\alpha+\beta}\subset T_{\alpha,\beta}$ 
for positive coprime $\alpha$ and $\beta$.

By the Bialynicki-Birula theorem \cite{BB73,BB76},
$(\Complex^2)^{[n]}_{\alpha,\beta}$ has a cellular decomposition
with cells
\[
C_p=\big\{z\in (\Complex^2)^{[n]}_{\alpha,\beta}:~
\lim_{t\to 0} tz=p \text{ for } t\in T_{1,\gamma}\big\}
\]
where $\gamma$ is sufficiently large and $p$ is a fixed point 
of the $(\Complex^{\ast})^2$ action. If $p$ is indexed by
the partition $\la\vdash n$, then \cite{BF13}
\[
\dim C_p=\BF_{\alpha,\beta}(\la).
\]
This decomposition carries over mutatis mutandis to\footnote{In \cite{BFN15}
the case $\alpha=0$ is included in the statement, but we believe this to
be a minor slip.}
$\big((\Complex^2)^{[n]}_{\alpha,\beta}\big)^{\Gamma_{\alpha+\beta}}$.
Hence
\[
P^{\textup{BM}}\Big(
\big((\Complex^2)^{[n]}_{\alpha,\beta}\big)^{\Gamma_{\alpha+\beta}};
z\Big)=\sum_{\la\vdash n} z^{\BF_{\alpha,\beta}(\la)},
\]
resulting in the following partition theorem \cite[Theorem~2]{BF13} and
\cite[Corollary~1.3]{BFN15}.

\begin{theorem}\label{Thm_BF}
For integers $\alpha\geq 1$ and $\beta\geq 0$, let $r:=\alpha+\beta$. Then
\[
\sum_{\la\in\PP} T^{\abs{\la}} z^{\BF_{\alpha,\beta}(\la)} =
\frac{(T^r;T^r)_{\infty}}{(T;T)_{\infty}(zT^r;T^r)_{\infty}}.
\]
\end{theorem}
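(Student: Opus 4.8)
The plan is to reduce the identity to the paper's own machinery and then isolate the genuinely new combinatorial content. First I would record a clean reformulation of the statistic: writing $g=\gcd(\alpha,\beta)$ and $(\alpha,\beta)=g(\alpha',\beta')$, the equation $\alpha\,l(s)=\beta\,(a(s)+1)$ forces $(a(s)+1,l(s))=(\alpha'k,\beta'k)$ for some integer $k\geq1$, whence $h(s)=(r/g)k$; the extra congruence $h(s)\equiv0\Mod{r}$ is then equivalent to $g\mid k$, so that
\[
\BFset_{\alpha,\beta}(\la)=\big\{s\in\la:~(a(s)+1,l(s))=k(\alpha,\beta)\text{ for some integer }k\geq1\big\}.
\]
In particular, taking $\beta=0$ (so $r=\alpha$) forces $l(s)=0$ and $h(s)\equiv0\Mod{r}$, i.e. $\BF_{r,0}(\la)=\abs{\HHb{r}(\la)}$.

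Next I would identify the right-hand side with a generating function already at our disposal. Specialising \eqref{Eq_app1b} to $S=1$ (equivalently, applying the decomposition $\psi_r$ of Proposition~\ref{Prop_properties} together with \eqref{Eq_GF-rkernels} and \eqref{Eq_GFpart}) gives
\[
\sum_{\la\in\PP}T^{\abs{\la}}z^{\abs{\HHb{r}(\la)}}
=\frac{(T^r;T^r)_{\infty}}{(T;T)_{\infty}(zT^r;T^r)_{\infty}},
\]
which is exactly the claimed right-hand side. By the previous paragraph this is the case $(\alpha,\beta)=(r,0)$ of the theorem, so it remains to show that $\BF_{\alpha,\beta}$ and $\BF_{r,0}$ are equidistributed on the partitions of each size; equivalently, that $\sum_{\la}T^{\abs{\la}}z^{\BF_{\alpha,\beta}(\la)}$ depends on $(\alpha,\beta)$ only through $r=\alpha+\beta$.

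To establish this equidistribution I would work with the bi-infinite edge sequence (Maya diagram) of Section~\ref{Sec_Littlewood}, in which a cell $s$ corresponds to a pair consisting of a $0$ at some position $i$ and a $1$ at a position $j>i$, with $a(s)$ the number of $1$s and $l(s)$ the number of $0$s strictly between them. By the reformulation above, the cells counted by $\BF_{\alpha,\beta}$ are precisely those pairs enclosing exactly $k\alpha-1$ ones and $k\beta$ zeros for some $k\geq1$. The goal is a size-preserving bijection on balanced edge sequences that transports this ``ray $(\alpha,\beta)$'' condition to the ``ray $(r,0)$'' condition defining $\HHb{r}$, built on an abacus whose runners are read in the order, and with the shifts, dictated by the slope $-\beta/\alpha$ rather than by residues modulo $r$.

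The hard part is exactly this last step, and the reason is a mismatch with the tools already developed: under the ordinary $r$-quotient $\phi_r$ the hook-\emph{length} of a cell localises perfectly (one has $\HH_r(\la)=r\HH(\boldsymbol{\nu})$ by \eqref{Eq_Lit3}), but the arm and leg \emph{separately} do not, since for a hook spanning an interval of the edge sequence the numbers of $0$s and $1$s contributed by the off-runner positions depend on the global bead configuration of all $r$ runners. Consequently the condition $(a(s)+1,l(s))=k(\alpha,\beta)$ cannot be read off from $\boldsymbol{\nu}$ alone, and neither the Han--Ji multiplication theorem nor its analogue (Theorem~\ref{Thm_new}) applies directly. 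Overcoming this requires the ray-adapted ($(\alpha,\beta)$-sheared) abacus above together with a proof that the resulting map is a weight-preserving bijection matching the two statistics; I would regard this as the crux. As a fallback---and as the route by which Buryak, Feigin and Nakajima originally obtained the formula---one may instead invoke the Bialynicki--Birula cellular decomposition, which makes $\sum_{\la\vdash n}z^{\BF_{\alpha,\beta}(\la)}$ a Poincar\'e polynomial depending on $r$ alone, thereby bypassing the combinatorial obstruction at the cost of geometric input.
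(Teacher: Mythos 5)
Your proposal is correct and follows essentially the same route as the paper: the paper does not prove Theorem~\ref{Thm_BF} combinatorially either, but derives it from the Bialynicki--Birula cellular decomposition (via $\dim C_p=\BF_{\alpha,\beta}(\la)$) together with the Buryak--Feigin and Buryak--Feigin--Nakajima geometric theorems --- exactly your fallback --- and it likewise singles out the equidistribution $\BF_{\alpha,\beta}(n)=\BF_{\alpha+\beta,0}(n)$, your unexecuted sheared-abacus crux, as precisely what is not understood combinatorially. Your reduction of the $\beta=0$ case to \eqref{Eq_app1b} via $\BF_{r,0}(\la)=\abs{\HHb{r}(\la)}$ also matches the paper's own observation that \eqref{Eq_BFz} generalises \eqref{Eq_app1}.
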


What makes this theorem difficult to prove purely combinatorially
is that it is not at all clear why
\[
\BF_{\alpha,\beta}(n)=\BF_{\alpha+\beta,0}(n),
\]
where $\BF_{\alpha,\beta}(n)$ is the multiset
\[
\BF_{\alpha,\beta}(n):=\{\BF_{\alpha,\beta}(\la): \la\vdash n\}.
\]
In fact, if one carefully inspects the details of the proof of 
Theorem~\ref{Thm_BF} in \cite[Section 3]{BF13} it follows that, 
more strongly,
\[
\BF_{\alpha,\beta}(n;\omega)=\BF_{\alpha+\beta,0}(n;\omega),
\]
where, for $\omega\in\CC_r$,
\[
\BF_{\alpha,\beta}(n;\omega):=\{\BF_{\alpha,\beta}(\la): \la\vdash n,
\textup{$(\alpha+\beta)$-core}(\la)=\omega\}.
\]
For example, for all
\[
(\alpha,\beta)\in\{(3,0),(2,1),(1,2)\},
\]
we have
\begin{gather*}
\BF_{\alpha,\beta}(9;0)=\{0^{10},1^8,2^3,3\}, \\
\BF_{\alpha,\beta}(9;(4,2))=\BF_{\alpha,\beta}(9;(2,2,1,1))=\{0^2,1\}, \\
\BF_{\alpha,\beta}(9;(5,3,1))=\BF_{\alpha,\beta}(9;(3,2,2,1,1))=\{0\}
\end{gather*}
and $\BF_{\alpha,\beta}(9;\omega)=\emptyset$ for all other 
$\omega\in\CC_3$, accounting for the $30$ partitions of $9$.
We thus have a slightly stronger result as follows.

\begin{proposition}
For integers $\alpha\geq 1$ and $\beta\geq 0$, let $r:=\alpha+\beta$. Then
\begin{subequations}\label{Eq_BFz}
\begin{equation}\label{Eq_BFza}
\sum_{\substack{\la\in\PP \\[1pt] \core{\la}=\omega}} 
T^{(\abs{\la}-\abs{\omega})/r} z^{\BF_{\alpha,\beta}(\la)}
=\frac{1}{(zT;T)_{\infty}(T;T)_{\infty}^{r-1}}
\end{equation}
for $\omega\in\CC_r$, and
\begin{equation}\label{Eq_BFzb}
\sum_{\la\in\PP} T^{\abs{\la}} S^{\abs{\HH_r(\la)}} 
z^{\BF_{\alpha,\beta}(\la)} 
=\frac{(T^r;T^r)_{\infty}^r}{(T;T)_{\infty}
(zST^r;ST^r)_{\infty}(ST^r;ST^r)_{\infty}^{r-1}}.
\end{equation}
\end{subequations}
\end{proposition}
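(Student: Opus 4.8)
The plan is to reduce both identities to equations \eqref{Eq_app1a} and \eqref{Eq_app1b}, whose right-hand sides coincide verbatim with the right-hand sides of \eqref{Eq_BFza} and \eqref{Eq_BFzb}. The key is to recognise $\BF_{\alpha,\beta}$ as a bottom-square statistic in the extreme case $\beta=0$. Indeed, when $\beta=0$ and $\alpha=r$ the defining relation $\alpha\,l(s)=\beta\,a(s)+\beta$ reads $r\,l(s)=0$, that is $l(s)=0$, so that
\[
\BFset_{r,0}(\la)=\{s\in\la:\ l(s)=0\ \text{and}\ h(s)\equiv 0\Mod{r}\}
\]
is exactly the multiset of bottom squares of $\la$ whose hook-length is divisible by $r$. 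Hence $\BF_{r,0}(\la)=\abs{\HHb{r}(\la)}$, and for $(\alpha,\beta)=(r,0)$ the claimed identities \eqref{Eq_BFza} and \eqref{Eq_BFzb} are precisely \eqref{Eq_app1a} and \eqref{Eq_app1b}, already proved by means of the map $\psi_r$ of Section~\ref{Sec_New-multiplication-thm}.

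For general $\alpha\geq 1$ and $\beta\geq 0$ with $\alpha+\beta=r$, the right-hand sides depend only on $r$, and the exponents of $T$ and $S$ on the left — namely $(\abs{\la}-\abs{\omega})/r=\abs{\HH_r(\la)}$ and $\abs{\la}$ — are likewise independent of the splitting $r=\alpha+\beta$. Thus everything reduces to showing that for each $n\geq 0$ and each $\omega\in\CC_r$ the multisets
\[
\BF_{\alpha,\beta}(n;\omega)=\{\BF_{\alpha,\beta}(\la):\ \la\vdash n,\ \core{\la}=\omega\}
\quad\text{and}\quad
\BF_{r,0}(n;\omega)
\]
coincide. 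Granting this, for each $n$ and $\omega$ the polynomial $\sum_{\la\vdash n,\,\core{\la}=\omega} z^{\BF_{\alpha,\beta}(\la)}$ is independent of how $r$ is split; reading this off the appropriate power of $T$ in \eqref{Eq_BFza} and of $T,S$ in \eqref{Eq_BFzb}, and summing over $\omega\in\CC_r$ where required, returns the two identities from \eqref{Eq_app1a} and \eqref{Eq_app1b}.

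The main obstacle is precisely this refined multiset identity, which is the $\omega$-graded strengthening of Theorem~\ref{Thm_BF}; it is the reason the Proposition is genuinely stronger than what the bare generating function gives. I would establish it by revisiting the Bialynicki--Birula cellular decomposition of $\big((\Complex^2)^{[n]}_{\alpha,\beta}\big)^{\Gamma_r}$ used by Buryak and Feigin, in which the cell $C_p$ attached to the fixed point $p$ indexed by $\la\vdash n$ has dimension $\BF_{\alpha,\beta}(\la)$. Their argument determines the full distribution of cell dimensions and finds it to depend only on $r$; the task is to track the $r$-core $\core{\la}$ of the indexing partition as an extra label and verify that, within each core class $\omega$, the list of dimensions is the same for all splittings $r=\alpha+\beta$. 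Concretely, one would translate $\BF_{\alpha,\beta}(\la)$ through Littlewood's $r$-quotient and check that the contribution of the squares with $h(s)\equiv 0\Mod{r}$ is unchanged as $\beta$ varies with $\alpha+\beta$ fixed. This is a genuine combinatorial coincidence — for $\beta>0$ the diagonal condition $\alpha\,l(s)=\beta\,a(s)+\beta$ does not interact simply with the decomposition — and it is exactly the point where the new multiplication theorem cannot be substituted: the statistic becomes directly accessible to $\psi_r$ only once $\beta=0$, which is why the honest work lies entirely in extracting the $\omega$-refined equality from the analysis in \cite{BF13}.
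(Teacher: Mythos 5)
Your proposal matches the paper's own argument: you reduce the general $(\alpha,\beta)$ case to $(\alpha,\beta)=(r,0)$ via the $\omega$-refined multiset equality $\BF_{\alpha,\beta}(n;\omega)=\BF_{r,0}(n;\omega)$, which the paper likewise obtains by careful inspection of the proof in \cite[Section 3]{BF13} (and, like you, does not redo in detail), and you then identify $\BF_{r,0}(\la)=\abs{\HHb{r}(\la)}$ so that \eqref{Eq_app1a} and \eqref{Eq_app1b} yield the two identities. The only cosmetic difference is that the paper derives \eqref{Eq_BFzb} from \eqref{Eq_BFza} by repeating the manipulation used to pass from \eqref{Eq_output} to \eqref{Eq_output-mod}, whereas you sum the $\omega$-refined equality over $\CC_r$ and invoke \eqref{Eq_app1b} directly.
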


Here \eqref{Eq_BFzb} follows from \eqref{Eq_BFza} by the exact same 
reasoning which shows that \eqref{Eq_output} and \eqref{Eq_output-mod} are 
equivalent, see the proof on page~\pageref{page_pf}.

Since $\BFset_{r,0}(\la)$ is exactly the set of bottom squares
of $\la$ whose hook-lengths are congruent to $0$ modulo $r$, it follows
that
\[
\BF_{r,0}(\la)=\abs{\HHb{r}(\la)}.
\]
The pair of identities \eqref{Eq_BFz} are thus a generalisation of 
\eqref{Eq_app1}.
Moreover, Theorem~\ref{Thm_new} may be restated as
the claim if
\[
f_r(T):=
\sum_{\la\in\PP} T^{\abs{\la}} 
\prod_{s\in\BFset_{1,0}(\la)} \rho\big(r h(s)\big),
\]
then
\[
\sum_{\substack{\la\in\PP \\[1pt] \core{\la}=\omega}} 
T^{(\abs{\la}-\abs{\omega})/r} 
\prod_{s\in\BFset_{r,0}(\la)} \rho\big(h(s)\big) = 
\frac{f_r(T)}{(T;T)_{\infty}^{r-1}}\quad
\]
and
\[
\sum_{\la\in\PP}T^{\abs{\la}} S^{\abs{\HH_r(\la)}}
\prod_{s\in\BFset_{r,0}(\la)} \rho\big(h(s)\big) = 
\frac{(T^r;T^r)_{\infty}^r}{(T;T)_{\infty}(ST^r;ST^r)_{\infty}^{r-1}}
\,f_r(ST^r)
\]
for $r$ a positive integer and $\omega$ an $r$-core.

\begin{problem}
Extend the above to all $\BFset_{\alpha,\beta}(\la)$.
\end{problem}

If such an extension exists, it is clear that $h(s)$
(which for $s\in\BFset_{r,0}(\la)$ is equal to $a(s)+1$) should be
replaced by a more complicated statistic on the squares of $\la$.


\begin{thebibliography}{99}

\bibitem{AF04}
R. M. Adin and A. Frumkin,
\textit{Rim hook tableaux and Kostant's $\eta$-function coefficients},
Adv. in Appl. Math. \textbf{33} (2004) 492--511.

\bibitem{Amdeberhan12}
T. Amdeberhan,
\textit{Theorems, problems and conjectures},
\href{http://arxiv.org/abs/arXiv:1207.4045}{arXiv:1207.4045}.

\bibitem{Andrews76}
G. E. Andrews,
\textit{The Theory of Partitions},
Encyclopedia of Mathematics and its Application, Vol.~2, Addison-Wesley,
Reading Mass., 1976.

\bibitem{AK18}
E. E. Anible and W. J. Keith,
\textit{Colored partitions and the hooklength formula: 
partition statistic identities},
\href{http://arxiv.org/abs/arXiv:1812.11256}{arXiv:1812.11256}.

\bibitem{AK09}
H. Awata and H. Kanno,
\textit{Refined BPS state counting from Nekrasov's formula and Macdonald 
functions}, 
Internat. J. Modern Phys. A \textbf{24} (2009) 2253--2306.

\bibitem{AK13}
H. Awata and H. Kanno,
\textit{Changing the preferred direction of the refined topological vertex},
J. Geom. Phys. \textbf{64} (2013), 91--110. 

\bibitem{BB73}
A. Bialynicki-Birula,
\textit{Some theorems on actions of algebraic groups},
Ann. Math. \textbf{98} (1973), 480--497.

\bibitem{BB76}
A. Bialynicki-Birula,
\textit{Some properties of the decompositions of algebraic 
varieties determined by actions of a torus},
Bull. Acad. Pol. Sci. Ser. Sci. Math. 
Astron. Phys. \textbf{24} (1976), 667--674.

\bibitem{Buryak12}
A. Buryak,
\textit{The classes of the quasihomogeneous Hilbert schemes of points 
on the plane},
Mosc. Math. J. \textbf{12} (2012), 21--36.

\bibitem{BF13}
A. Buryak and B. L. Feigin,
\textit{Generating series of the Poincar\'e polynomials of quasihomogeneous
Hilbert schemes}, in
\textit{Symmetries, Integrable Systems and Representations}, pp.~15--33,
Springer Proc. Math. Stat., 40, Springer, Heidelberg, 2013. 

\bibitem{BFN15}
A. Buryak, B. L. Feigin and H. Nakajima,
\textit{A simple proof of the formula for the Betti numbers of the 
quasihomogeneous Hilbert schemes},
Int. Math. Res. Not. IMRN \textbf{2015}, 4708--4715.

\bibitem{CRV16}
E. Carlsson and F. Rodriguez-Villegas,
\textit{Vertex operators and character varieties},
\href{http://arxiv.org/abs/1603.09267}{arXiv:1603.09267}.

\bibitem{CDDP15}
W. Chuang, D.-E. Diaconescu, R. Donagi and T. Pantev,
\textit{Parabolic refined invariants and Macdonald polynomials},
Commun. Math. Phys. \textbf{335} (2015) 1323--1379. 

\bibitem{CKW09}
E. Clader, Y. Kemper and M. Wage,
\textit{Lacunarity of certain partition-theoretic generating functions}, 
Proc. Amer. Math. Soc. \textbf{137} (2009), 2959--2968.

\bibitem{Clemm14}
A. Clemm,
\textit{A conjecture of Han on 3-cores and modular forms},
\href{http://arxiv.org/abs/arXiv:1410.7219}{arXiv:arXiv:1410.7219}. 

\bibitem{CMSZ17}
T. Cotron, A. Michaelsen, E. Stamm, W. Zhu,
\textit{Lacunary eta-quotients modulo powers of primes},
\href{http://arxiv.org/abs/arXiv:1707.04627}{arXiv:arXiv:1707.04627}. 

\bibitem{Darcais13}
F. D'Arcais, 
\textit{D\'eveloppment en s\'erie}
Interm\'ediaire Math. \textbf{20} (1913) 233--234.

\bibitem{DJKMO89}
E. Date, M. Jimbo, A. Kuniba, T. Miwa and M. Okado, 
\textit{Paths, Maya diagrams and representations of 
$\widehat{\mathfrak{sl}}(r,{\bf C})$}, 
in \textit{Integrable Systems in Quantum Field Theory and 
Statistical Mechanics}, pp. 149--191, 
Adv. Stud. Pure Math., 19, Academic Press, Boston, MA, 1989.

\bibitem{DH11}
P.-O. Dehaye and G.-N. Han,
\textit{A multiset hook length formula and some applications},
Discrete Math. \textbf{311} (2011), 2690--2702.

\bibitem{DHX17}
P.-O. Dehaye, G.-N. Han and H. Xiong,
\textit{Difference operators for partitions under the Littlewood decomposition},Ramanujan J. \textbf{44} (2017), 197--225.

\bibitem{Evain04}
L. Evain,
\textit{Irreducible components of the equivariant punctual Hilbert schemes},
Adv. Math. \textbf{185}, (2004) 328--346.

\bibitem{FRT54}
J. S. Frame, G. de B. Robinson and R. M. Thrall,
\textit{The hook graphs of the symmetric group},
Can. J. Math. \textbf{6} (1954), 316--325.

\bibitem{FKMO08}
S. Fujii, H. Kanno, S. Moriyama and S. Okada,
\textit{Instanton calculus and chiral one-point functions in supersymmetric
gauge theories}, 
Adv. Theor. Math. Phys. \textbf{12} (2008), 1401--142.

\bibitem{GLV18}
K. Gallagher, L. Li and K. Vassilev,
\textit{Lacunarity of Han--Nekrasov--Okounkov $q$-series},
\href{http://arxiv.org/abs/arXiv:1807.04576}{arXiv:1807.04576}.

\bibitem{GKS90}
F. Garvan, D. Kim and D. Stanton, 
\textit{Cranks and $t$-cores}, 
Invent. Math. \textbf{101} (1990) 1--17.

\bibitem{GR04}
G. Gasper and M. Rahman,
\textit{Basic Hypergeometric Series}, second edition,
Encyclopedia of Mathematics and its Applications, Vol.~96,
Cambridge University Press, Cambridge, 2004.

\bibitem{Han10}
G.-N. Han,
\textit{The Nekrasov--Okounkov hook length formula: 
refinement, elementary proof, extension and applications},
Ann. Inst. Fourier (Grenoble) \textbf{60} (2010), 1--29.

\bibitem{HJ11}
G.-N. Han and K. Q. Ji,
\textit{Combining hook length formulas and BG-ranks for partitions via 
the Littlewood decomposition},
Trans. Amer. Math. Soc. \textbf{363} (2011) 1041--1060.

\bibitem{HO11}
G.-N. Han and K. Ono,
\textit{Hook lengths and $3$-cores},
Ann. Comb. \textbf{15} (2011), 305--312.

\bibitem{HX18}
G.-N. Han and H. Xiong,
\textit{Difference operators for partitions and some applications},
Ann. Comb. \textbf{22} (2018), 317--346.

\bibitem{HX16}
G.-N. Han and H. Xiong,
\textit{Polynomiality of some hook-content summations for doubled distinct
and self-conjugate partitions},
\href{http://arxiv.org/abs/1601.04369}{arXiv:1601.04369}. 

\bibitem{HLRV11}
T. Hausel, E. Letellier and F. Rodriguez-Villegas,
\textit{Arithmetic harmonic analysis on character and quiver varieties},
Duke Math. J. \textbf{160} (2011), 323--400.

\bibitem{HRV08}
T. Hausel and F. Rodriguez-Villegas,
\textit{Mixed Hodge polynomials of character varieties},
(with an appendix by N.~M.~Katz.),
Invent. Math. \textbf{174} (2008), 555--624.

\bibitem{HN18}
B. Heim and M. Neuhauser,
\textit{On conjectures regarding the Nekrasov--Okounkov hook length formula},
\href{http://arxiv.org/abs/arXiv:1810.02226}{arXiv:1810.02226}.

\bibitem{IKS10}
A. Iqbal, C. Koz\c{c}az and K. Shabbir, 
\textit{Refined topological vertex, cylindric partitions and $U(1)$ 
adjoint theory},
Nuclear Phys. B \textbf{838} (2010), 422--457.

\bibitem{INRS12}
A. Iqbal, S. Nazir, Z. Raza, Z. Saleem,
\textit{Generalizations of Nekrasov--Okounkov identity},
Ann. Comb. \textbf{16} (2012), 745--753.

\bibitem{JK81}
G. James and A. Kerber,
\textit{The Representation Theory of the Symmetric Group},
Encyclopedia of mathematics and its applications, Vol.~16, 
Addison-Wesley, Reading MA, 1981.

\bibitem{Klyachko82}
A. A. Klyachko,
\textit{Modular forms and representations of symmetric groups},
J. Soviet Math. \textbf{26} (1984), 1879--1887.

\bibitem{vanLeeuwen99}
M. A. A. van Leeuwen,
\textit{Edge sequences, ribbon tableaux, and an action of affine 
permutations},
European J. Combin. \textbf{20} (1999), 179--195.

\bibitem{LLZ06}
J. Li, K. Liu and J. Zhou, 
\textit{Topological string partition functions as equivariant indices},
Asian J. Math. \textbf{10} (2006), 81--114.

\bibitem{Littlewood51}
D. E. Littlewood,
\textit{Modular representations of symmetric groups}, 
Proc. Roy. Soc. London. Ser. A \textbf{209} (1951) 333--353.

\bibitem{Macdonald95}
I. G. Macdonald,
\textit{Symmetric functions and Hall polynomials},
2nd edition, Oxford Univ. Press, New York, 1995.

\bibitem{Nakayama41}
T. Nakayama
\textit{On some modular properties of irreducible representations of a
symmetric group. I \& II}, 
Jap. J. Math. \textbf{18} (1941), 89--108 \& \textbf{17} (1941), 411--423.

\bibitem{NO06}
N. A. Nekrasov and A. Okounkov,
\textit{Seiberg--Witten theory and random partitions},
in \textit{The Unity of Mathematics}, pp.~525--596, 
Progr. Math., Vol.~ 244, Birkh\"auser Boston, Boston, MA, 2006.

\bibitem{Panova12}
G. Panova,
\textit{Polynomiality of some hook-length statistics},
Ramanujan J. \textbf{27} (2012), 349--356. 

\bibitem{Petreolle16}
M. P\'etr\'eolle,
\textit{A Nekrasov--Okounkov type formula for $\tilde{C}$},
Adv. in Appl. Math. \textbf{79} (2016), 1--36.

\bibitem{Petreolle16b}
M. P\'etr\'eolle,
\textit{Extensions of character formulas by the Littlewood decomposition},
\href{http://arxiv.org/abs/arXiv:1612.03771}{arXiv:1612.03771}.

\bibitem{PS09}
R. Poghossian and M. Samsonyan,
\textit{Instantons and the 5D $U(1)$ gauge theory with extra adjoint},
J. Phys. A \textbf{42} (2009), 304024.

\bibitem{RW18}
E. Rains and S. O. Warnaar,
\textit{A Nekrasov--Okounkov formula for Macdonald polynomials},
J. Algebraic Combin. \textbf{48} (2018), 1--30.

\bibitem{Robinson38}
G. de B. Robinson, 
\textit{On the Representations of the symmetric group},
Amer. J. Math. \textbf{60} (1938), 745--760.

\bibitem{Schensted61}
C. Schensted,
\textit{Longest increasing and decreasing subsequences}, 
Canad. J. Math. \textbf{13} (1961), 179--191.

\bibitem{Stanley99}
R. P. Stanley, 
\textit{Enumerative Combinatorics. Vol.~2},
Cambridge Studies in Advanced Mathematics, Vol.~62,
Cambridge University Press, Cambridge, 1999.

\bibitem{Stanley10}
R. P. Stanley,
\textit{Some combinatorial properties of hook lengths, contents, and parts
of partitions},
Ramanujan J. \textbf{23} (2010), 91--105. 


\bibitem{Tingley08}
P. Tingley,
\textit{Three combinatorial models for $\widehat{sl}_n$ crystals, 
with applications to cylindric plane partitions},
Int. Math. Res. Not. IMRN \textbf{2008}, Art.~ID rnm143, 40~pp.

\bibitem{Waelder08}
R. Waelder, 
\textit{Equivariant elliptic genera and local McKay correspondences}, 
Asian J. Math. \textbf{12} (2008), 251--284.

\bibitem{Westbury06}
B. W. Westbury,
\textit{Universal characters from the Macdonald identities},
Adv. Math. \textbf{202} (2006), 50--63.

\bibitem{WW96}
E. T. Whittaker and G. N. Watson, 
\textit{A Course of Modern Analysis}, 
reprint of the fourth (1927) edition, 
Cambridge Mathematical Library, Cambridge University Press, 
Cambridge, 1996.

\end{thebibliography}
\end{document}